  \numberwithin{equation}{section}
  \newtheorem{thm}{Theorem}[section]
  \newtheorem{lemma}[thm]{Lemma}
  \newtheorem{prop}[thm]{Proposition}
  \newtheorem{cor}[thm]{Corollary}
  \newtheorem{assertion}[thm]{Assertion}
  \newtheorem{thmi}{Theorem}     
  \newtheorem{conji}{Conjecture} 
  \theoremstyle{definition} 
  \newtheorem{defin}[thm]{Definition}
  \newtheorem{remark}[thm]{Remark}
  \newtheorem{example}[thm]{Example}
  \newcommand{\cE}{\mathscr E}
  \newcommand{\cI}{\mathscr I}
  \newcommand{\cL}{\mathscr L}
  \newcommand{\cO}{\mathscr O}
  \newcommand{\cX}{\mathscr X}
  \newcommand{\bb}[1]{\mathbf{#1}} 
  \newcommand{\FF}{\bb{F}}
  \newcommand{\PP}{\bb{P}}
  \newcommand{\QQ}{\bb{Q}}
  \newcommand{\ZZ}{\bb{Z}}
  \renewcommand{\phi}{\varphi}
  \newcommand{\isom}{\simeq} 
  \newcommand{\ra}{\longrightarrow}    
  \newcommand{\isomto}{\xrightarrow{\,\smash{\raisebox{-0.65ex}{\ensuremath{\scriptstyle\sim}}}\,}}
  \DeclareMathOperator{\Bl}{Bl}
  \DeclareMathOperator{\Def}{Def}
  \DeclareMathOperator{\Ext}{Ext}
  \DeclareMathOperator{\Gr}{Gr}
  \DeclareMathOperator{\Hom}{Hom}
  \DeclareMathOperator{\Pic}{Pic}
  \DeclareMathOperator{\Proj}{Proj}
  \DeclareMathOperator{\Sing}{Sing}
  \DeclareMathOperator{\Spec}{Spec}
  \DeclareMathOperator{\Sym}{Sym}
  \newcommand{\cExt}{{\mathscr E}\kern -.5pt xt}
  \newcommand{\cHom}{\mathscr{H}\kern -.5pt om}
  \newcommand{\wt}[1]{{\mathchoice%
    {\raisebox{1.5ex}{\resizebox{1.7ex}{!}{{}\hphantom{i}\ensuremath{{\sim}}}} \hspace{-1.7ex}{#1}}%
    {\smash{\raisebox{1.5ex}{\resizebox{1.7ex}{!}{{}\hphantom{i}\ensuremath{{\sim}}}}\hspace{-1.7ex}{#1 }}\vphantom{\tilde I}}%
    {\raisebox{1.1ex}{\resizebox{1.3ex}{!}{{}\hphantom{i}\ensuremath{{\sim}}}}\hspace{-1.3ex}{#1}}%
    {\raisebox{0.8ex}{\resizebox{1ex}{!}{{}\hphantom{i}\ensuremath{{\sim}}}}\hspace{-1ex}{#1}}%
  }}
  \newcommand{\swt}[1]{{\mathchoice%
    {\raisebox{0.9ex}{\resizebox{1.2ex}{!}{\ensuremath{{\sim}}}}\hspace{-1.4ex}{#1}}%
    {\smash{\raisebox{0.9ex}{\resizebox{1.2ex}{!}{\ensuremath{{\sim}}}}\hspace{-1.4ex}{#1 }}\vphantom{I}}%
    {\raisebox{0.7ex}{\resizebox{0.8ex}{!}{\ensuremath{{\sim}}}}\hspace{-0.9ex}{#1}}%
    {\raisebox{0.5ex}{\resizebox{1ex}{!}{{}\hphantom{i}\ensuremath{{\sim}}}}\hspace{-1ex}{#1}}%
  }}
  \newcommand{\wtcI}{\smash{\raisebox{1.5ex}{\hspace{0.7ex}\resizebox{1.2ex}{!}{\ensuremath{{\sim}}}}\hspace{-2.1ex}{\cI}}\vphantom{I}}
  \author[P.\ Achinger]{Piotr Achinger}
  \address{Institute of Mathematics, Polish Academy of Sciences, ul.\ Śniadeckich 8, 00-656 Warsaw, Poland}
  \email{pachinger@impan.pl}
  \author[J.\ Witaszek]{Jakub Witaszek}
  \address{Department of Mathematics,University of Michigan, Ann Arbor, MI 48109} 
  \email{jakubw@umich.edu}
  \author[M.\ Zdanowicz]{Maciej Zdanowicz}
  \address{\'Ecole Polytechnique F\'ed\'erale de Lausanne, MA C3 585, Station 8, CH-1015 Lausanne}
  \email{maciej.zdanowicz@epfl.ch}
  \title[Global Frobenius Liftability II]{Global Frobenius Liftability II: Surfaces and Fano threefolds}
  \date{\today}
  \subjclass[2010]{Primary 14G17, Secondary 14M17, 14M25, 14J45} 
\begin{document}

\begin{abstract}
  In this article, a sequel to \cite{PartI}, we continue the development of a comprehensive theory of Frobenius liftings modulo $p^2$. We study compatibility of divisors and closed subschemes with Frobenius liftings, Frobenius liftings of blow-ups, descent under quotients by some group actions, stability under base change, and the properties of associated $F$-splittings. Consequently, we characterise Frobenius liftable surfaces and Fano threefolds, confirming the conjecture stated in our previous paper.
\end{abstract}

\maketitle

\section{Introduction}
\label{s:intro}

Let $X$ be a smooth and proper algebraic variety over an algebraically closed field $k$ of characteristic $p>0$. We call $X$ \emph{$F$-liftable} if there exists a flat lifting $\wt X$ of $X$ over $W_2(k)$ together with a lifting $\wt F_X\colon \wt X\to \wt X$ of the absolute Frobenius $F_X\colon X\to X$; the pair $(\wt X, \wt F_X)$ is called a \emph{Frobenius lifting} of $X$. 
While \emph{local} liftings of Frobenius are related to the Cartier operator and are of utmost importance in characteristic $p$ geometry \cite{DeligneIllusie}, \emph{global} Frobenius liftings are very rare and heavily restrict the geometry of the variety $X$ in quite subtle ways (see \cite{BTLM}, where this was studied extensively in the case of homogeneous spaces, and \cite{xin16} for the case of surfaces). In the paper \cite{PartI}, we have begun a comprehensive study of $F$-liftable varieties, and proposed the following conjectural characterization. 

\begin{conji}[{\cite[Conjecture~1]{PartI}}] \label{conj:froblift}
  Let $X$ be a smooth projective variety over an algebraically closed field $k$ of characteristic $p>0$. If $X$ is $F$-liftable, then there exists a finite \'etale Galois cover $f\colon Y\to X$ such that the Albanese morphism $a_Y\colon  Y\ra {\rm Alb}(Y)$ is a toric fibration. In particular, if $X$ is simply connected, then it is a toric variety.
\end{conji}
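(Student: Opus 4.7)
The overall strategy is to reduce the conjecture to the special case of simply connected $X$ via an analysis of the Albanese morphism, and then to prove that simply connected $F$-liftable varieties are toric. In the first stage, I would study the Albanese map $a_X \colon X \to {\rm Alb}(X)$ of an $F$-liftable $X$. Using the universal property of the Albanese and the $F$-liftability of abelian varieties, together with the compatibility of $F$-liftings with morphisms developed in \cite{PartI}, one expects $a_X$ to be smooth, surjective, and with geometrically connected fibers; smoothness is crucial and should be deduced by showing that any degenerate fiber would obstruct the global $F$-lifting of $X$ via the results on compatibility with closed subschemes established in the present paper.

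Next, I would construct a finite étale Galois cover $f\colon Y \to X$, obtained by base change along an isogeny of ${\rm Alb}(X)$ followed by taking a Galois closure, such that the monodromy of the Albanese fibration becomes trivial and the geometric fibers of $a_Y\colon Y\to {\rm Alb}(Y)$ are simply connected. The stability of $F$-liftability under finite étale covers and base change, proved in this paper, ensures that both $Y$ and its Albanese fibers remain $F$-liftable. The problem then splits into two substeps: (a) showing that each geometric fiber of $a_Y$ is toric, and (b) showing that $a_Y$ is a toric fibration, rather than merely a fibration with toric fibers. Substep (b) should follow by identifying the structure group of the bundle and arguing that a suitable $H^1$ with values in the quotient of the fiber automorphism group by the maximal torus vanishes; this vanishing should be forced by the $F$-lifting of the morphism $a_Y$ itself, analogously to the descent arguments for quotients by group actions developed in this paper.

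The hardest and genuinely new step is substep (a): proving that any smooth projective, simply connected, $F$-liftable variety $Z$ is toric. This is precisely what the present paper establishes case-by-case for surfaces and smooth Fano threefolds. A general approach would combine the $F$-splitting (and induced global $F$-regularity) coming from any $F$-lifting, the rational connectedness of $Z$ (expected from simple connectedness together with $F$-liftability through Kodaira-type vanishing), and the construction of a maximal torus action on $Z$ whose boundary divisor is the zero scheme of the canonical $F$-splitting. Producing a \emph{global} torus action from these splitting and cohomological data is the main obstacle, and new ideas blending $F$-splittings, logarithmic geometry, and rigidity phenomena for rationally connected varieties will likely be required to extend the dimension-specific methods of this paper to the general case.
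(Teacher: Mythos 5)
The statement you were asked about is a \emph{conjecture}, not a theorem of the paper: the authors prove it only for $\dim X \leq 2$ (Theorem~\ref{thm:c1-surfaces}) and for Fano threefolds in large characteristic assuming a boundedness assertion (Theorem~\ref{thm:fano3}), and they do so by classification-specific arguments --- for surfaces, by reducing to rational and ruled surfaces and running an induction on contractions to a Hirzebruch surface using the $\QQ$-divisor $\Delta_{\wt F_X}$ attached to a Frobenius lifting; for Fano threefolds, by lifting to characteristic zero, invoking the Mori--Mukai classification, and descending back via a Hilbert-scheme rigidity argument. Your proposal, by contrast, outlines a general attack on the full conjecture, and it contains genuine gaps at every structural joint. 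The assertion that the Albanese morphism of an $F$-liftable variety is smooth with connected fibers, and that a degenerate fiber ``would obstruct the global $F$-lifting via compatibility with closed subschemes,'' is not established anywhere: the compatibility results of \S\ref{s:froblift-compatible} require a chosen lifting of the subscheme compatible with $\wt F_X$, and nothing forces the fibers of $a_X$ to admit such compatible liftings. Likewise, the construction of the finite \'etale Galois cover trivializing the fibration, and the claimed vanishing of an $H^1$ in automorphism groups making $a_Y$ a \emph{toric} fibration rather than a fibration with toric fibers, are asserted, not argued; this is essentially the content of the conjecture itself rather than a reduction of it.

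The decisive gap is your substep (a): the claim that a simply connected, smooth projective, $F$-liftable variety is toric is precisely the open core of Conjecture~\ref{conj:froblift}, and proposing to ``produce a global torus action'' from the canonical $F$-splitting and rational connectedness is a restatement of the problem, not a proof strategy with identifiable steps --- as you yourself concede when you say new ideas will likely be required. So the proposal cannot be compared to a proof in the paper (there is none for the general statement), and as a blind attempt it does not close any of the steps it enumerates; at best it reproduces, in outline, the reduction philosophy already articulated in \cite{PartI}, while the actual verified cases in this paper are obtained by entirely different, classification-driven means.
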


\noindent
The goal of the present paper is to further the understanding of the geometric aspects of $F$-liftability. We provide several general results on Frobenius liftings, particularly in relationship with $F$-splittings, that often apply also to $X$ proper or singular. In the end, we put all of our findings to use and settle Conjecture~\ref{conj:froblift} in the following cases.

\begin{thmi} \label{thmi:main} 
  Conjecture~\ref{conj:froblift} is true if
  \begin{enumerate}[(1)]
    \item $\dim X\leq 2$ (see Theorem~\ref{thm:c1-surfaces}), or
    \item $X$ is a Fano threefold and $p\gg 0$, assuming a form of boundedness (see Theorem~\ref{thm:fano3} and Remark~\ref{rmk:fano3}(b)).
  \end{enumerate}
\end{thmi}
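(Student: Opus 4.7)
The plan is a case analysis based on the classifications of smooth projective surfaces (for part (1)) and of smooth Fano threefolds (for part (2)), combined with the structural results on Frobenius liftings established earlier in this paper.

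For part (1), I would first reduce to the minimal case. Using the theory of Frobenius liftings of blow-ups developed here, contracting a $(-1)$-curve that is compatible with $(\wt X, \wt F_X)$ produces an $F$-liftable minimal model $X_{\min}$, and $X$ is recovered from $X_{\min}$ by blowing up at centers compatible with its Frobenius lifting. Since $F$-liftable implies $F$-split, and $F$-splitness produces a nonzero section of $(1-p)K_X$, one has $\kappa(X)\leq 0$, so only Kodaira dimensions $-\infty$ and $0$ need be analyzed among minimal surfaces.

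For $\kappa(X)=-\infty$, the possibilities are $\PP^2$, a Hirzebruch surface $\FF_n$, or a $\PP^1$-bundle $X\to C$ with $g(C)\geq 1$. The first two are toric. For the ruled case, pushforward of an $F$-splitting yields an $F$-splitting of $C$, which forces $g(C)=1$; then the Albanese map of $X$ agrees with the bundle projection, whose $\PP^1$-fibers are toric. For $\kappa(X)=0$, the candidates are abelian, bi-elliptic, K3, and Enriques. The abelian case works when $X$ is ordinary (the Albanese is an isomorphism), and a bi-elliptic surface admits an étale Galois cover by an abelian surface; the K3 and Enriques cases should be excluded using the finer Chern-class and cohomological obstructions to $F$-liftability recalled from \cite{PartI} (which go beyond mere $F$-splitness). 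Finally, to extend to non-minimal $X$, I would show that Frobenius-compatible centers on an $F$-liftable toric or abelian variety must be torus-invariant (resp.\ torsion), so that blowing them up still yields a variety admitting an étale Galois cover whose Albanese is a toric fibration.

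For part (2), I would invoke the classification of smooth Fano threefolds, which under the stated boundedness assumption and for $p\gg 0$ gives finitely many deformation types. The known toric Fano threefolds are singled out, and for each non-toric family the strategy is to derive a contradiction. Key tools would be compatibility of Mori contractions with Frobenius liftings (to reduce to lower-dimensional $F$-liftable varieties covered by part (1)), restrictions on $T_X$ coming from the splitting of $F_*\cO_X$, and descent of $F$-liftability under finite group actions as developed here. The main obstacle I anticipate is twofold: in part (1), the exclusion of K3 and Enriques surfaces, which requires invariants finer than $F$-splitness; in part (2), the lengthy case analysis for primitive Fano threefolds of Picard rank one, where no immediate reduction to lower-dimensional geometry is available and one must exploit the specific geometry of each family.
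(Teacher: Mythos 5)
Your high-level strategy (surface classification for part (1), Fano threefold classification for part (2), both wielding the lifting/descent machinery from earlier in the paper) matches the paper's, but there are two genuine gaps and one misreading of where the difficulty lies.

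First, for ruled surfaces over an ordinary elliptic curve $C$: you assert that the Albanese map ``agrees with the bundle projection, whose $\PP^1$-fibers are toric,'' as if every such $\PP^1$-bundle were $F$-liftable. This is false. The surface $\PP_C(E)$, where $E$ is a non-split self-extension of $\cO_C$, is \emph{not} $F$-liftable, and ruling this case out is the hardest point in the surface argument (Proposition~\ref{prop:ruled_surface}, proved via an $\FF_p$-invariance analysis of the $\QQ$-divisor $\Delta_{\wt F_X}$ in Lemmas~\ref{lem:p1} and~\ref{lem:flat_ruled_surface}). This is precisely the place where the published literature went astray (Remark~\ref{rem:xin}), so it cannot be glossed over. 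Second, your treatment of $\kappa(X)=0$ is backward: you treat K3 and Enriques surfaces as the hard cases requiring ``finer Chern-class and cohomological obstructions,'' but they are eliminated immediately, since an $F$-liftable surface with torsion canonical is a free finite quotient of an ordinary abelian surface by Mehta--Srinivas, hence has $\chi(\cO_X)=0$, while K3 and Enriques have $\chi=2,1$. No finer invariants are needed.

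For part (2), you propose applying the Fano threefold classification directly and working through ``primitive Fano threefolds of Picard rank one.'' Two issues. The Mori--Mukai classification is not known in characteristic $p$, so the paper must lift a given $F$-liftable Fano $X$ to characteristic zero (possible because Bott vanishing forces $H^1(X,T_X)=0$), invoke the classification there, and then \emph{descend} the conclusion back to characteristic $p$; this last step is the technically delicate one, requiring a Hilbert-scheme specialization argument (Proposition~\ref{prop:fano-rigid-spec}) and the boundedness assumption -- your proposal omits this descent entirely. Also, you overestimate the size of the case analysis: since $F$-liftable Fanos are rigid, all non-rigid families (which is almost all of Picard rank one) are excluded for free, leaving only a short list (Theorem~\ref{thm:mori-mukai}) that splits into blow-ups on products (Proposition~\ref{prop:compatible_with_frobenius_on_products_preliminary}), Bott-vanishing failures, a conic-in-$\PP^2$ argument, the twisted-cubic case (which uses the relative $F$-splitting machinery from \S\ref{sec:f_liftings_and_splittings_relative}), and del Pezzo surfaces already handled by part (1).
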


Verifying whether a given variety is $F$-liftable can be difficult in practice.  As shown in \cite{BTLM}, a smooth projective $F$-liftable variety satisfies \emph{Bott vanishing}
\begin{equation} \label{eqn:bott-van} 
  H^j(X, \Omega_X^i \otimes L) \quad (j>0, \, L \text{ ample}).
\end{equation}
While this property alone heavily restricts the geometry of an $F$-liftable variety, it is also cumbersome to verify, nor does it characterize $F$-liftable varieties \cite{totaro_bott_vanishing}. One of our goals is to provide a toolkit which enables one to deduce that a given variety is not $F$-liftable using geometric methods. 

From now on let us work over a fixed perfect field $k$ of characteristic $p>0$. Recall that an $F$-splitting on a $k$-scheme $X$ is an $\cO_X$-linear splitting of the map $\cO_X\to F_{X*} \cO_X$. There is a considerable amount of interplay between Frobenius liftings and $F$-splittings. First, if $X$ is normal, then every Frobenius lifting of $X$ induces an $F$-splitting. Second, properties of a Frobenius lifting are often reflected in the induced $F$-splitting (for example, being compatible with a divisor). However, Frobenius liftings tend to be more rigid than $F$-splittings; for example, every Frobenius lifting of a product of projective varieties arises as a product of Frobenius liftings of the factors (Corollary~\ref{cor:flift-products} and Proposition~\ref{prop:compatible_with_frobenius_on_products_preliminary}), while the corresponding fact is not true for $F$-splittings.

In \cite{zdanowicz}, the third author provided an explicit functorial construction of a lifting modulo $p^2$ associated to an $F$-splitting (see Theorem~\ref{thm:zdanowicz-canonical-lift}), which we exploit in \S\ref{s:flift-fsplit}. Using some properties of the Witt vector scheme $W_2(X)$, we show the following:

\begin{thmi}[{Theorem~\ref{thm:fsplit-liftings}}]
  Let $(\wt X, \wt F_X)$ be a Frobenius lifting of a $k$-scheme $X$, and let $\sigma$ be a Frobenius splitting on $X$. Let $\wt X(\sigma)$ be the canonical lifting of $X$ induced by $\sigma$ as in \cite{zdanowicz}. Then there is a canonical isomorphism $\wt X(\sigma)\isom \wt X$ of liftings of $X$. 
\end{thmi}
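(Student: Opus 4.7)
The plan is to realize both $\wt X$ and $\wt X(\sigma)$ as quotients of the Witt vector scheme $W_2(X)$ and identify them using the $F$-splitting property $\sigma \circ F = \mathrm{id}$. Recall that $\cO_{W_2(X)}$ fits in the $W_2(k)$-linear short exact sequence
\[
  0 \ra F_{X*}\cO_X \ra \cO_{W_2(X)} \ra \cO_X \ra 0,
\]
with the injection given in Witt coordinates by $b \mapsto (0,b)$; a direct calculation $p\cdot(a_0,a_1) = (0,a_0^p)$ shows that $p\cO_{W_2(X)} = F(\cO_X) \subseteq F_{X*}\cO_X$, so $\cO_{W_2(X)}$ is not flat over $W_2(k)$ in general.

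The first step is to associate to the Frobenius lifting $(\wt X, \wt F_X)$ the canonical $W_2(k)$-algebra homomorphism
\[
  \phi \colon \cO_{\wt X} \ra \cO_{W_2(X)}, \qquad \phi(\wt a) := \bigl(\bar a,\; \tfrac{1}{p}(\wt F_X^*(\wt a) - \wt a^p) \bmod p\bigr),
\]
where $\bar a := \wt a \bmod p$. That this is a well-defined ring map lifting $\mathrm{id}_{\cO_X}$ is a routine unwinding of the Witt addition and multiplication polynomials, using only that $\wt F_X^*$ reduces mod $p$ to Frobenius and that $\cO_{\wt X}$ is $p$-torsion-free. One then checks $\phi(p\wt a) = p\phi(\wt a) = (0,\bar a^p)$, so $\phi$ sends the ideal $p\cO_{\wt X} \isom \cO_X$ into $F_{X*}\cO_X$ via the absolute Frobenius.

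Next I would recast the construction of $\wt X(\sigma)$ from \cite{zdanowicz} in the same setup. The kernel $K_\sigma := \ker\sigma$ is an $\cO_X$-submodule of the square-zero ideal $F_{X*}\cO_X \subseteq \cO_{W_2(X)}$, and since the $\cO_{W_2(X)}$-action on this ideal factors through $\cO_X$, it is in fact an ideal. The quotient $\cO_{W_2(X)}/K_\sigma$ is then a flat $W_2(k)$-algebra lifting $\cO_X$, fitting in the extension
\[
  0 \ra F_{X*}\cO_X/K_\sigma \ra \cO_{W_2(X)}/K_\sigma \ra \cO_X \ra 0,
\]
with $\sigma$ inducing an isomorphism $F_{X*}\cO_X/K_\sigma \isomto \cO_X$ compatible with $p$-multiplication, thanks to $\sigma \circ F = \mathrm{id}$. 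This identifies the quotient with $\wt X(\sigma)$ of Theorem~\ref{thm:zdanowicz-canonical-lift}.

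The desired isomorphism is then the composition $\cO_{\wt X} \xrightarrow{\phi} \cO_{W_2(X)} \twoheadrightarrow \cO_{W_2(X)}/K_\sigma = \cO_{\wt X(\sigma)}$. On the subsheaf $\cO_X \subseteq \cO_{\wt X}$ it becomes $\sigma \circ F = \mathrm{id}_{\cO_X}$; modulo $p$ it is $\mathrm{id}_{\cO_X}$ by construction of $\phi$; the five lemma then forces it to be an isomorphism of liftings, and canonicity is automatic from the construction. The main technical point I expect to require care is matching the quotient $\cO_{W_2(X)}/K_\sigma$ with the presentation of $\wt X(\sigma)$ in \cite{zdanowicz} — if the latter is phrased as a pushout of the Witt vector extension along $\sigma$, the equivalence is a standard diagram chase using surjectivity of $\sigma$, but it should be verified against the conventions of \cite{zdanowicz} before invoking the five lemma above.
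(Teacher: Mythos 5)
Your proposal is correct and is essentially the paper's argument: your $\phi$ is precisely the pullback $\nu^*_{\wt X, \wt F_X}$ along the map $\nu_{\wt X, \wt F_X}\colon W_2(X)\to \wt X$ used in the paper, and your surjection $\cO_{W_2(X)}\twoheadrightarrow \cO_{W_2(X)}/K_\sigma$ is the pullback of the closed immersion $i_\sigma\colon\wt X(\sigma)\hookrightarrow W_2(X)$, with the paper concluding exactly as you do that the resulting map of flat liftings is an isomorphism since it is the identity modulo $p$. Note also that your worry at the end is already taken care of: Theorem~\ref{thm:zdanowicz-canonical-lift} as quoted in the paper states the presentation $\cO_{\wt X(\sigma)}\simeq W_2(\cO_X)/\{(0,f):\sigma(f)=0\}$, so no further reconciliation with \cite{zdanowicz} is required.
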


\noindent In particular, if $X$ is normal, then there exists at most one lifting $\wt X$ to which the Frobenius lifts (Corollary~\ref{cor:unique-f-lift}). 

As observed and applied in \cite{PartI}, one can descend the property of being $F$-liftable along fibrations $f\colon Y\to X$, or more generally certain maps for which $f^*\colon \cO_X\to Rf_*\cO_Y$ is a split monomorphism (\cite[Theorem~3.3.6]{PartI}, restated as Theorem~\ref{thm:descending-frob-lift}(ab) below). Using Frobenius splittings and canonical liftings described above, we can extend this to quotients by linearly reductive groups (Theorem~\ref{thm:descending-frob-lift}(c)). Similar ideas allow us to treat $F$-liftability of products (Corollary~\ref{cor:flift-products} and Proposition~\ref{prop:compatible_with_frobenius_on_products_preliminary}).

In a different direction, we deal with $F$-liftability of pairs (\S\ref{s:froblift-compatible}). We study Frobenius liftings of simple normal crossings pairs $(X, D)$ and introduce the related notion of a Frobenius lifting of a pair $(X, Z)$ where $Z\subseteq X$ is a closed subscheme (Definition~\ref{def:flift-compatible}). We show:

\begin{thmi}[{Proposition~\ref{prop:blow-ups}}] \label{thmi:flift-comp}
  Suppose that $X$ and $Z$ are smooth, and let $(\wt X, \wt Z)$ be a lifting of $(X, Z)$. Further, let $\swt\pi\colon \wt Y\to \wt X$ be the blow-up along $\wt Z$. Then a lifting of Frobenius $\wt F_X$ on $\wt X$ is compatible with $\wt Z$ if and only if it extends to a lifting of Frobenius on $\wt Y$. The induced lifting on $\wt F_Y$ is automatically compatible with the exceptional divisor $\swt\pi^{-1}(\wt Z)$.
\end{thmi}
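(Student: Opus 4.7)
My plan is to carry out a direct local computation. Working affine-locally on $\wt X$, choose étale coordinates $\wt x_1,\ldots,\wt x_n$ such that $\wt Z = V(\wt x_1,\ldots,\wt x_c)$, and write $\wt F_X^*(\wt x_k) = \wt x_k^p + p\wt h_k$. Guided by the analogy with divisors (where compatibility of $\wt F_X$ with $\wt D$ is $\wt F_X^*\wt D = p\wt D$), I expect compatibility with $\wt Z$ to translate locally to $\wt F_X^*(\wt\cI_{\wt Z})\subseteq\wt\cI_{\wt Z}^p$, equivalently $h_k\in\cI_Z^p$ for $1\le k\le c$, where $\cI_Z=(x_1,\ldots,x_c)$. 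I would then cover $\wt Y$ by the standard affine charts $U_i$ ($1\le i\le c$) with coordinates $\wt y_i=\wt x_i$, $\wt z_k=\wt x_k/\wt x_i$ for $k\in\{1,\ldots,c\}\setminus\{i\}$, and $\wt x_{c+1},\ldots,\wt x_n$; the exceptional divisor $\wt E$ is cut out by $\wt y_i$ on $U_i$.

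For the forward direction, the hypothesis $h_i\in\cI_Z^p$ yields $\swt\pi^*(\wt h_i)\equiv \wt y_i^p\wt H_i\pmod{p}$ for some $\wt H_i\in\cO_{\wt Y}(U_i)$, so that $\wt F_Y^*(\wt y_i):=\swt\pi^*\wt F_X^*(\wt x_i) = \wt y_i^p(1+p\wt H_i)$ modulo $p^2$ is a unit multiple of $\wt y_i^p$. An analogous decomposition for the remaining $\wt h_k$ ($k\neq i$, $k\le c$) allows me to solve uniquely for $\wt F_Y^*(\wt z_k)\in\cO_{\wt Y}(U_i)$ from the constraint $\wt F_Y^*(\wt z_k)\cdot\wt F_Y^*(\wt y_i)=\swt\pi^*\wt F_X^*(\wt x_k)$; setting $\wt F_Y^*(\wt x_j):=\swt\pi^*\wt F_X^*(\wt x_j)$ for $j>c$ then completes the Frobenius lift on $U_i$. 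The local lifts glue because the universal property of the blow-up makes the factorization of $\wt F_X\circ\swt\pi$ through $\swt\pi$ unique. The equality $(\wt F_Y^*\wt y_i)=(\wt y_i^p)$ of ideals on $U_i$ gives the automatic compatibility of $\wt F_Y$ with $\wt E$.

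For the converse, given any $\wt F_Y$ extending $\wt F_X$, write $\wt F_Y^*(\wt y_i)=\wt y_i^p+p\wt H_i$ and $\wt F_Y^*(\wt z_k)=\wt z_k^p+p\wt G_k$. Expanding $\wt F_Y^*(\wt z_k\wt y_i)=\swt\pi^*\wt F_X^*(\wt x_k)$ modulo $p^2$ and comparing the $p$-linear parts yields the mod-$p$ identity $\pi^*(h_k)-z_k^p\pi^*(h_i)=y_i^p G_k$ on $U_i$ (using that the $\wt y_i$-equation gives $\wt H_i\equiv \swt\pi^*\wt h_i\pmod p$). Expanding $h_k=\sum_\alpha h_{k,\alpha}(x')x^\alpha$ as a polynomial in $x_1,\ldots,x_c$, the coefficient of $y_i^m$ in $\pi^*(h_k)$ is a polynomial of total $z$-degree $\le m$, whereas $z_k^p\pi^*(h_i)$ contributes only at $z_k$-degree $\ge p$. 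For $m<p$ the two parts are separated by $z_k$-degree, so both must vanish independently; this forces $h_k,h_i\in\cI_Z^p$. Varying $i$ over $\{1,\ldots,c\}$ yields $h_k\in\cI_Z^p$ for all $k\le c$, i.e., compatibility.

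The main obstacle I anticipate is keeping the bookkeeping transparent: gluing the local lifts into a global $\wt F_Y$ by invoking the blow-up's universal property on chart overlaps, and cleanly extracting the monomial-degree argument in the converse. The underlying algebra itself is routine once the adapted coordinates are fixed.
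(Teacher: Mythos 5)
Your proposal is substantively correct and follows the same underlying computation as the paper, though with a different presentation in each direction.

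For the converse (``extends implies compatible''), your chart equation $\pi^*(h_k) - z_k^p\pi^*(h_i) = y_i^p G_k$ on $U_i$ (equivalently $x_i^p h_k - x_k^p h_i \in \cI_Z^{2p}$) is exactly the condition the paper writes down, and your $z_k$-degree-separation argument is a hands-on proof of the colon-ideal identity $(\cI_Z^{2p}+(x_i^p)):(x_j^p)=\cI_Z^p$ that the paper invokes to finish. One step needs more care: you ``expand $h_k=\sum_\alpha h_{k,\alpha}(x')x^\alpha$ as a polynomial,'' but in \'etale coordinates $A$ is only \'etale over a polynomial ring, so $h_k$ is a general ring element and has no such finite (or canonical) polynomial expansion. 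The fix is standard and in the spirit of what you wrote: pass to the $\cI_Z$-adic completion (where $h_k$ is a genuine power series and the degree count literally applies), or equivalently run the argument in the associated graded ring $\gr_{\cI_Z}A \isom (A/\cI_Z)[\bar x_1,\ldots,\bar x_c]$, which is a polynomial ring because $(x_1,\ldots,x_c)$ is a regular sequence. This is precisely what makes the paper's colon-ideal identity hold. A small side note: from a single pair $(i,k)$ your argument already yields both $h_i,h_k\in\cI_Z^p$, so one fixes $i$ and varies $k$ (or vice versa); the point stands.

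For the forward direction, the paper uses a compact $\Proj$-algebra argument ($\wt F_X^*$ sends $\wtcI_{\wt Z}^n$ into $\wtcI_{\wt Z}^{np}$, giving a graded map of Rees algebras, with a nilpotent-extension argument to see the rational map is everywhere defined). Your chart construction is more explicit and equally valid; in fact, your appeal to the universal property for gluing is the cleanest route: one checks directly that $(\wt F_X\circ\swt\pi)^*\wtcI_{\wt Z}\cdot\cO_{\wt Y}$ equals $\cO_{\wt Y}(-p\wt E)$ (which is exactly your local unit computation $\swt\pi^*\wt F_X^*(\wt x_i)=\wt y_i^p(1+p\wt H_i)$), so the universal property of the blow-up gives a global $\wt F_Y$ with $\swt\pi\circ\wt F_Y=\wt F_X\circ\swt\pi$, and compatibility with $\wt E$ follows because $\wt F_Y^*\wt y_i$ is a unit times $\wt y_i^p$. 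With the completion/associated-graded fix noted above, your proof is sound.
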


The relationship between Frobenius liftings and $F$-splittings is particularly useful in a relative situation, which we explore in \S\ref{sec:f_liftings_and_splittings_relative}. In \S\ref{s:frob-base-change}, using Witt vectors of general rings we show a fundamental property of Frobenius liftings (Proposition~\ref{prop:frob-lift}): if $(\wt Y, \wt F_Y)$ is a Frobenius lifting of a scheme $Y$, $\phi\colon Z\to Y$ is a map, and $\wt Z$ is a lifting of $Z$ to $W_2(k)$, then $\phi\circ F_Y$ has a preferred lifting to a map $\psi\colon \wt Z\to \wt Y$. Using this, we prove a curious `base change' property of Frobenius liftings (see Corollary~\ref{cor:lifts-of-Cartesian-diagrams}), which in particular implies that the fibers of an ``$F$-liftable morphism'' are $F$-liftable. Further, we analyse the induced relative $F$-splittings. 

Let us end this introduction with a few words about the proof of Theorem~\ref{thmi:main}. The case of surfaces (\S\ref{s:surfaces}) is quickly reduced to rational and ruled surfaces, in which cases the question is still not completely trivial.  In the case of rational surfaces, our approach proceeds by induction with respect to the number of contractions necessary to obtain a Hirzebruch surface from a given surface. Note that for ruled surfaces our results do not entirely agree with \cite{xin16}, see Remark~\ref{rem:xin}. The case of Fano threefolds (\S\ref{s:fano3}) relies on the Mori–Mukai classification with about 100 families; see Remark~\ref{rmk:fano3} for a~discussion. Since many of these varieties naturally arise as blow-ups, Theorem~\ref{thmi:flift-comp} is particularly useful in treating their $F$-liftability.   The case which caused us the most trouble initially was the blow-up of $\PP^3_k$ along a twisted cubic, for which we used our study of relative $F$-splittings associated to $F$-liftable fibrations mentioned above.  Since the Mori--Mukai classification is not known in characteristic $p$, we can only apply it to a lift of a given $F$-liftable Fano threefold to characteristic zero. Descending the information obtained this way back to characteristic $p$ relies on a slightly delicate argument with Hilbert schemes (see Proposition~\ref{prop:fano-rigid-spec}).

Although a proof based on the classification might seem tedious and not very useful, we embarked on this task for two reasons. First, the proof confirms our belief that Frobenius liftability is very rare among smooth projective varieties, and provides substantial empirical evidence for the validity of Conjecture~\ref{conj:froblift}. But what is even more important is that the examples we have analyzed  were a source of inspiration in our study of general properties of $F$-liftable  varieties.   

\medskip
\noindent \emph{Throughout the paper, we work over a fixed perfect field $k$ of characteristic $p>0$.}

\subsection*{Acknowledgements}

We would like to thank Paolo Cascini, Nathan Ilten, Adrian Langer, Arthur Ogus, Vasudevan Srinivas, Nicholas Shepherd-Barron, and Jarosław Wiśniewski for helpful suggestions and comments. Part of this work was conducted during the miniPAGES Simons Semester at the Banach Center in Spring 2016.  The authors would like to thank the Banach Center for hospitality. P.A.\ was supported by NCN OPUS grant number UMO-2015/17/B/ST1/02634 and NCN SONATA grant number UMO-2017/26/D/ST1/00913. J.W.\ was supported by the Engineering and Physical Sciences Research Council [EP/L015234/1].  M.Z.\ was supported by NCN PRELUDIUM grant number UMO-2014/13/N/ST1/02673.  This work was partially supported by the grant 346300 for IMPAN from the Simons Foundation and the matching 2015--2019 Polish MNiSW fund. 


\section{\texorpdfstring{$F$-liftability and $F$-splittings}{F-liftability and F-splittings}}
\label{s:flift-fsplit}

In this section we study the interplay between Frobenius liftings and Frobenius splittings. On the one hand, a Frobenius lifting of a normal variety induces a Frobenius splitting. On the other, a  Frobenius splitting induces a canonical lifting of the underlying variety modulo $p^2$ (albeit without a lifting of Frobenius in general). Comparing the two constructions will allow us to descend the property of being $F$-liftable along some group quotient maps and treat $F$-liftability of products.

\subsection{\texorpdfstring{$F$-splittings associated to Frobenius liftings}{F-splittings associated to Frobenius liftings}}

A \emph{Frobenius lifting} of a $k$-scheme $X$ is, by definition, a flat scheme $\wt X$ over $W_2(k)$ lifting $X$ endowed with a morphism $\wt F \colon \wt X \to \wt X$ lifting the absolute Frobenius $F_X$. If such a pair $(\wt X, \wt F)$ exists, we call $X$ \emph{$F$-liftable}. To avoid confusion, we shall often write $\wt F_{X}$ instead of $\wt F$. An \emph{$F$-splitting} on $X$ is an $\cO_X$-linear splitting of the map $F_X^* \colon \cO_X \to F_{X*} \cO_X$, and if it exists, we call $X$ \emph{$F$-split} (the standard reference for Frobenius splittings is \cite[Chapter I]{BrionKumar}). 

\begin{example}[{\cite[\S 3.1]{PartI}}] \label{ex:flift}
The following are examples of $F$-liftable varieties:
\begin{enumerate}[(a)]
  \item ordinary abelian varieties,
  \item more generally, a variety admitting (a) as a finite \'etale cover,
  \item toric varieties,
  \item more generally, a toric fibration \cite[\S 2.1]{PartI} over an ordinary abelian variety.
\end{enumerate}
\end{example}

We extend the above terminology to morphisms as follows. We fix a Frobenius lifting $(\wt S, \wt F_S)$ of a base $k$-scheme $S$, and then by a Frobenius lifting of a scheme $X\to S$ we shall mean a Frobenius lifting $(\wt X,  \wt F_X)$ and a map $\wt X\to \wt S$ lifting $X\to S$ which is compatible with the Frobenius liftings on source and target. We then obtain a lifting $\wt F_{X/S}\colon \wt X\to \wt X{}' = \wt F_S^* \wt X$ of the relative Frobenius $F_{X/S}^* \colon X\to X' = F_S^* X$. An $F$-splitting of the map $X\to S$ (or an $F$-splitting on $X$ relative to $S$) is an $\cO_X$-linear splitting of $F_{X/S}^* \colon \cO_{X'} \to F_{X/S,*} \cO_X$.

In the above situation, and assuming that $X\to S$ is smooth, we often study the Frobenius lifting of $X/S$ by means of the morphism
\begin{equation} \label{eqn:def-xi} 
  \xi = \frac 1 p \wt F{}_{X/S}^* \colon F_{X/S}^* \Omega^1_{X'/S} \ra \Omega^1_{X/S}
\end{equation}
induced by $\wt F_{X/S}^* \colon \wt F_{X/S}^* \Omega^1_{\wt X{}'/\wt S} \to \Omega^1_{\wt X/\wt S}$, using that $F_{X/S}^* \colon F_{X/S}^* \Omega^1_{X'/S} \to \Omega^1_{X/S}$ is the zero map.

\begin{prop}[{\cite[Proof of Th\'eor\`eme 2.1 and \S 4.1]{DeligneIllusie}, \cite[Theorem 2]{BTLM}}] \label{prop:frobenius_cotangent_morphism}
  Let $(\wt S, \wt F_S)$ be a Frobenius lifting of a $k$-scheme $S$, and let $(\wt X, \wt F_X)$, $\wt X\to \wt S$ be a Frobenius lifting of a smooth morphism $X\to S$. The morphism $\xi$ defined above satisfies the following properties:
  \begin{enumerate}[(a)]
    \item The adjoint morphism $\xi^{\rm ad}\colon \Omega^1_{X'/S} \to F_{X/S*}\Omega^1_{X/S}$ has image in the subsheaf $Z^1_{X/S}$ of closed forms and provides a splitting of the short exact sequence
    \[ 
      0\ra B^1_{X/S} \ra Z^1_{X/S}\overset{C_{X/S}}\ra \Omega^1_{X'/S} \ra 0.
    \]  
    \item The Grothendieck dual of the determinant
    \[ 
      \det(\xi) \colon F_{X/S}^* \omega_{X'/S} \ra \omega_{X/S}
    \]
    is a morphism $F_{X/S,*}\cO_X \to \cO_X$ which furnishes a Frobenius splitting $\sigma$ of $X$ relative to $S$. In particular, the homomorphism $\xi$ is injective.  
  \end{enumerate}
\end{prop}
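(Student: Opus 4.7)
My plan is to follow the classical Deligne--Illusie recipe, adapted to the relative setting. First I would verify that $\xi$ is well defined: since the relative Frobenius annihilates $1$-forms (as $d(a^p)=0$), the morphism $\wt F_{X/S}^*$ lands in $p\Omega^1_{\wt X/\wt S}$, and flatness of $\Omega^1_{\wt X/\wt S}$ (from smoothness of $X\to S$) yields a canonical isomorphism $\Omega^1_{X/S}\isomto p\Omega^1_{\wt X/\wt S}$ via multiplication by $p$; dividing gives $\xi$. For concrete computations I would choose étale coordinates $x_1,\dots,x_n$ on $X/S$, lift them to $\wt x_i$ on $\wt X/\wt S$, and write $\wt F_{X/S}^*(\wt x_i')=\wt x_i^p+p\wt h_i$ for some $\wt h_i\in\cO_{\wt X}$; unwinding the definition yields $\xi(dx_i') = x_i^{p-1}dx_i + dh_i$.

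For part (a), each form $x_i^{p-1}dx_i+dh_i$ is manifestly closed (the first summand being closed because $d(x_i^{p-1}dx_i)=(p-1)x_i^{p-2}\,dx_i\wedge dx_i=0$), so the image of $\xi^{\rm ad}$ lies in $Z^1_{X/S}$. Invoking the defining properties of the Cartier operator---$C_{X/S}(f^{p-1}df)=df'$ and $C_{X/S}|_{B^1}=0$---I get $C_{X/S}\circ \xi^{\rm ad}(dx_i')=dx_i'$, so $\xi^{\rm ad}$ splits the Cartier sequence on a generating set and hence globally.

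For part (b), I would invoke Grothendieck duality for the finite flat morphism $F_{X/S}$: the trace isomorphism
\[
  \cHom_{\cO_{X'}}(F_{X/S*}\cO_X,\cO_{X'}) \isom F_{X/S*}\cHom(F_{X/S}^*\omega_{X'/S},\omega_{X/S})
\]
transforms $\det(\xi)$ into a morphism $\sigma\colon F_{X/S*}\cO_X\to\cO_{X'}$. Checking that $\sigma$ is a splitting amounts to computing $\sigma(1)=1$, which in the coordinates above reduces to verifying that the coefficient of the monomial $x_1^{p-1}\cdots x_n^{p-1}\,dx_1\wedge\cdots\wedge dx_n$ in $\det(\xi)(dx_1'\wedge\cdots\wedge dx_n')=\det\bigl(x_i^{p-1}\delta_{ij}+\partial h_i/\partial x_j\bigr)\,dx_1\wedge\cdots\wedge dx_n$ equals $1$---immediate from expanding the determinant, since every nonleading term contains some $\partial h_i/\partial x_j$ and therefore has total $x$-degree strictly less than $n(p-1)$ in the pure monomial part. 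Finally, injectivity of $\xi$ drops out: $\det(\xi)$, being identified with a nowhere-vanishing splitting, is nowhere zero, and since the source and target of $\xi$ are locally free of equal rank, nonvanishing of $\det(\xi)$ forces $\xi$ to be fibrewise injective, hence injective as a sheaf map.

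The main obstacle, in my view, is pinning down conventions for the trace isomorphism so that the local computation identifies $\det(\xi)$ with $\sigma$ normalized to split $F_{X/S}^*$ on the nose, rather than a scalar multiple thereof. Once one fixes an explicit local description of the Cartier trace $F_{X/S*}\omega_{X/S}\to\omega_{X'/S}$ (extracting the coefficient of $x_1^{p-1}\cdots x_n^{p-1}$ relative to the chosen bases), the rest is a routine determinant expansion together with the fact that the splitting property, the closedness of $\xi^{\rm ad}$, and the compatibility with $C_{X/S}$ can all be checked étale-locally.
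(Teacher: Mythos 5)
The paper offers no independent proof of this proposition; it simply cites \cite{DeligneIllusie} and \cite{BTLM}. Your proposal reconstructs the standard Deligne--Illusie argument in local \'etale coordinates, and the structure of that argument, including the formula $\xi(dx_i') = x_i^{p-1}dx_i + dh_i$, the identification of $\xi^{\rm ad}$ as a splitting of the Cartier sequence, and the passage through Grothendieck duality for the finite flat relative Frobenius, is the right one.

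There is, however, one step that is not correct as written. In part (b) you compute the coefficient of $x_1^{p-1}\cdots x_n^{p-1}$ in $\det\bigl(x_i^{p-1}\delta_{ij}+\partial h_i/\partial x_j\bigr)$ by a degree argument, claiming that every term that contains a factor $\partial h_i/\partial x_j$ has total $x$-degree strictly less than $n(p-1)$. This is false: the $h_i$ are arbitrary regular functions, so $\partial h_i/\partial x_j$ has no a~priori degree bound, and in fact individual diagonal and off-diagonal terms of the determinant can each contribute nontrivially to the $x^{(p-1,\dots,p-1)}$-coefficient (for example $h_1 = x_1x_2$, $h_2 = x_1^{p-1}x_2^{p-1}$ already produces cancelling contributions from $\partial h_1/\partial x_1\cdot\partial h_2/\partial x_2$ and $-\partial h_1/\partial x_2\cdot\partial h_2/\partial x_1$). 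The correct conclusion rests on these cancellations, which a degree count does not see. The clean fix is to bypass the determinant expansion altogether and use the multiplicativity of the Cartier operator on wedge products: since the trace $\mathrm{Tr}_{X/S}\colon F_{X/S*}\omega_{X/S}\to\omega_{X'/S}$ agrees with $C_{X/S}$ in top degree, one has
\[
  C_{X/S}\Bigl(\textstyle\bigwedge_i\bigl(x_i^{p-1}dx_i+dh_i\bigr)\Bigr)
  \;=\;
  \textstyle\bigwedge_i C_{X/S}\bigl(x_i^{p-1}dx_i+dh_i\bigr)
  \;=\;
  \textstyle\bigwedge_i dx_i',
\]
using $C_{X/S}(f^{p-1}df)=df'$ and $C_{X/S}(dh_i)=0$. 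This is exactly $\sigma(1)=1$. With that step repaired, the rest of your argument, including the deduction of injectivity of $\xi$ from nonvanishing of $\det(\xi)$, is complete.
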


\noindent
In particular, if $S=\Spec k$ and $X$ is normal, then every Frobenius lifting of $X$ induces an $F$-splitting on $X$ by \cite[1.1.7(iii)]{BrionKumar}.

\subsection{\texorpdfstring{Canonical liftings of $F$-split schemes}{Canonical liftings of F-split schemes}}
\label{ss:f-split}

We shall now recall a functorial construction of a lifting to $W_2(k)$ of a $k$-scheme endowed with an $F$-splitting due to the third author. 

\begin{defin}
  The category $\mathbf{FSplit}$ of $F$-split schemes has as objects $F$-split schemes, i.e.\ pairs  $(X, \sigma_X)$ of a $k$-scheme $X$ and a Frobenius splitting $\sigma_X$ on $X$, and as morphisms $(X, \sigma_X)\to (Y, \sigma_Y)$ maps $f\colon X\to Y$ over $k$ for which the following diagram commutes
    \[
      \xymatrix{
        F_{Y*}f_*\cO_X \ar@{=}[r] & f_*F_{X*}\cO_X \ar[r]^-{f_*(\sigma_X)} &  f_*\cO_X \\
        F_{Y*}\cO_Y \ar[u]^-{F_{Y*}(f^*)} \ar[rr]_-{\sigma_Y} & & \cO_Y \ar[u]_-{f^*}.  
      }
    \]
\end{defin}

\begin{thm}[{\cite[Theorem 3.6]{zdanowicz}}] \label{thm:zdanowicz-canonical-lift}
  There exists a functor 
  \[ 
    (X, \sigma)\mapsto \wt X(\sigma) \colon \mathbf{FSplit} \ra \left(\text{flat schemes}/W_2(k)\right)
  \]
  together with a functorial identification $X \isom \wt X(\sigma) \otimes_{W_2(k)} k$. The structure sheaf $\cO_{\wt X(\sigma)}$ can be described as the quotient of $W_2(\cO_X)$ by the ideal $\{ (0, f)\,:\, \sigma(f)=0\}$.
\end{thm}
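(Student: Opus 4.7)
The plan is to construct $\wt X(\sigma)$ by a universal Witt-vector quotient, verify flatness via the canonical structure of $W_2$, and deduce functoriality directly from the morphism condition in $\mathbf{FSplit}$. Working affine-locally on $\Spec A \subseteq X$, recall the short exact sequence of sheaves of abelian groups
\[
  0 \to V(\cO_X) \to W_2(\cO_X) \to \cO_X \to 0,
\]
where $V(\cO_X) = \{(0, f)\}$ is an ideal of $W_2(\cO_X)$, and the Witt-vector multiplication $(a,b)\cdot(0,f) = (0, a^p f)$ identifies it, as an $\cO_X$-module via the projection, with $F_{X*}\cO_X$. I would next check that
\[
  I_\sigma := \{(0, f) : \sigma(f) = 0\} \subseteq V(\cO_X)
\]
is an ideal of $W_2(\cO_X)$: additivity of $\sigma$ together with the Witt sum formula $(0,f)+(0,g) = (0,f+g)$ handles sums, while the projection formula $\sigma(a^p f) = a\sigma(f)$ handles arbitrary Witt-vector multiples. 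Since all Witt-vector operations are given by universal polynomials, the construction is local and glues to a sheaf of ideals on $X$; define $\cO_{\wt X(\sigma)} := W_2(\cO_X)/I_\sigma$.

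Next I would establish flatness of $\cO_{\wt X(\sigma)}$ over $W_2(k)$ and compute its reduction mod $p$. Since $\sigma$ splits $F_X^*$ it is surjective, and by definition its kernel is $I_\sigma$ (under the identification $V(\cO_X) \cong F_{X*}\cO_X$), so it descends to an isomorphism $V(\cO_X)/I_\sigma \isomto \cO_X$. The quotient of the above exact sequence thus becomes
\[
  0 \to \cO_X \to \cO_{\wt X(\sigma)} \to \cO_X \to 0.
\]
The key identity is that multiplication by $p$ on $W_2(\cO_X)$ equals $V \circ F$, i.e.\ $p \cdot (a,b) = (0, a^p)$; passing to the quotient and invoking $\sigma(a^p) = a$, this identifies multiplication by $p$ on $\cO_{\wt X(\sigma)}$ with the composition of the right-hand projection and the left-hand inclusion. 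Hence $\ker(p\cdot) = p \cdot \cO_{\wt X(\sigma)}$, and together with $\cO_{\wt X(\sigma)}\otimes_{W_2(k)} k \isom \cO_X$ this is exactly the local criterion for $\cO_{\wt X(\sigma)}$ to be a flat $W_2(k)$-algebra lifting $\cO_X$.

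For functoriality, let $f \colon (X, \sigma_X) \to (Y, \sigma_Y)$ be a morphism in $\mathbf{FSplit}$. The functoriality of Witt vectors produces $W_2(f^\#) \colon W_2(\cO_Y) \to f_* W_2(\cO_X)$, and to descend to the quotients one needs $W_2(f^\#)(I_{\sigma_Y}) \subseteq f_* I_{\sigma_X}$, i.e.\ the implication $\sigma_Y(g) = 0 \Rightarrow \sigma_X(f^\#(g)) = 0$. But this is precisely the content of the defining commutative diagram of $\mathbf{FSplit}$, which asserts $f^\# \circ \sigma_Y = \sigma_X \circ f^\#$. The main subtlety throughout the argument is the bookkeeping around the Frobenius twist in the $\cO_X$-module structure on $V(\cO_X)$: it is exactly this twist that makes the isomorphism $V(\cO_X)/I_\sigma \cong \cO_X$ encode the splitting condition on $\sigma$ (and not merely additive surjectivity). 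Once this is in place the rest is a formal diagram chase built on the universal polynomial nature of Witt vectors.
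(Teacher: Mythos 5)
Your proof is correct and follows exactly the construction indicated by the paper, which states the theorem by citing \cite{zdanowicz} (where the third author first proved it) and describes $\cO_{\wt X(\sigma)}$ as the quotient of $W_2(\cO_X)$ by $\{(0,f):\sigma(f)=0\}$ — precisely the recipe you carry out. Your verification that $I_\sigma$ is an ideal via the Witt formulas $(0,f)+(0,g)=(0,f+g)$ and $(a,b)\cdot(0,f)=(0,a^pf)$, the identification of multiplication by $p$ with $V\circ F$ and hence $\sigma(a^p)=a$, the resulting local flatness criterion $\ker(p\cdot)=p\cO_{\wt X(\sigma)}$, and the deduction of functoriality from the defining square of $\mathbf{FSplit}$ are all accurate.
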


\subsection{\texorpdfstring{$F$-splittings and quotients by linearly reductive groups}{F-splittings and quotients by linearly reductive groups}}
\label{ss:goodquot}

For a group $G$ acting on $Y$, we say that a morphism $\pi \colon Y \to X$ is a \emph{good quotient} by $G$, if it is $G$-invariant, affine, and $\cO_X=(\pi_*\cO_Y)^G$. The following lemma will be applied for $G$ being a torus or a finite group of order prime to $p$.

\begin{lemma}\label{lem:galois_compatible_fsplit}
  Let $Y$ be an $F$-split scheme of finite type over $k$, let $G$ be a linearly reductive algebraic group acting on $Y$, and let $\pi \colon Y \to X$ be a good quotient. Then there exist splittings $\sigma_Y \colon F_{Y*}\cO_Y \to \cO_Y$ and $\sigma_X \colon F_{X*}\cO_X \to \cO_X$ such that $\pi\colon (Y, \sigma_Y)\to(X, \sigma_X)$ is a morphism of $F$-split schemes. In particular, $\pi$ admits a lifting $\swt\pi\colon \wt Y\to \wt X$. 
\end{lemma}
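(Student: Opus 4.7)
The plan is to construct a $G$-equivariant $F$-splitting $\sigma_Y$ on $Y$ by modifying the given one, then descend it to an $F$-splitting $\sigma_X$ on $X$ via the identification $\cO_X = (\pi_*\cO_Y)^G$, and finally invoke the functoriality of Theorem~\ref{thm:zdanowicz-canonical-lift} to produce the lift $\swt\pi$.

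For the first step, set $Q := F_{Y*}\cO_Y / \cO_Y$ and $V := H^0(Y, \cHom_{\cO_Y}(Q, \cO_Y))$. The set $T$ of $F$-splittings of $Y$ is a non-empty torsor under $V$, and the $G$-action on $Y$ endows both with compatible $G$-actions, linear on $V$. Given any $\sigma_0 \in T$, the map $c(g) := g \cdot \sigma_0 - \sigma_0 \in V$ is a $1$-cocycle $c \colon G \to V$. Because $V$ arises as the global sections of a $G$-equivariant quasi-coherent sheaf on a finite-type $G$-scheme, it is a rational $G$-module (a filtered union of finite-dimensional rational sub-representations), and so the linear reductivity of $G$ yields $H^1(G, V) = 0$. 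Hence $c$ is a coboundary, $c(g) = g \cdot v - v$ for some $v \in V$, and $\sigma_Y := \sigma_0 - v \in T$ is $G$-invariant. In the two cases of interest this reduces to averaging over $G$ when $G$ is finite of order prime to $p$, or to projecting onto the zero-weight component when $G$ is a torus.

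For the second step, affineness of $\pi$ gives $\pi_*F_{Y*}\cO_Y = F_{X*}\pi_*\cO_Y$, and linear reductivity of $G$ makes $(-)^G$ exact on $G$-equivariant quasi-coherent $\cO_X$-modules, yielding $(\pi_*F_{Y*}\cO_Y)^G = F_{X*}\cO_X$. The $G$-equivariant map $\pi_*\sigma_Y$ thus restricts on $G$-invariants to a map $\sigma_X \colon F_{X*}\cO_X \to \cO_X$. Precomposing the defining identity $\pi^* \circ \sigma_X = \pi_*(\sigma_Y) \circ F_{X*}(\pi^*)$ with $F_X^*$ and using $\sigma_Y \circ F_Y^* = \mathrm{id}$ verifies $\sigma_X \circ F_X^* = \mathrm{id}$, while the compatibility square for $\mathbf{FSplit}$ is tautological by construction. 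Applying the functor of Theorem~\ref{thm:zdanowicz-canonical-lift} to $\pi \colon (Y, \sigma_Y) \to (X, \sigma_X)$ then produces the lift $\swt\pi \colon \wt Y(\sigma_Y) \to \wt X(\sigma_X)$. The main obstacle is ensuring rationality of the $G$-action on the potentially infinite-dimensional space $V$ so that the $H^1$-vanishing applies; this is handled by the standard fact about $G$-equivariant sheaves on finite-type $G$-schemes cited above, and is otherwise easy to overlook when $Y$ is not proper.
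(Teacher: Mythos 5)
Your argument is correct and is essentially the same as the paper's: both isolate the rational $G$-module underlying the space of Frobenius splittings (you via the torsor under $H^0(Y,\cHom(Q,\cO_Y))$ and the vanishing of $H^1(G,-)$, the paper via a $G$-equivariant splitting of the evaluation-at-$1$ surjection $\varepsilon^{-1}(k)\to k$, which for a linearly reductive group are equivalent statements), and both then descend the resulting $G$-invariant splitting to $X$ via $\cO_X=(\pi_*\cO_Y)^G$ and apply Theorem~\ref{thm:zdanowicz-canonical-lift}. You are also right to flag the rationality of the $G$-action on the possibly infinite-dimensional module as the point requiring care when $Y$ is not proper; the paper addresses exactly this by writing $\varepsilon^{-1}(k)$ as an increasing union of finite-dimensional $G$-subrepresentations.
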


\begin{proof}
The relative Frobenius $F_{G/k}\colon G\to G'$ is a group homomorphism and the relative Frobenius $F_{Y/k}\colon Y\to Y'$ is $F_{G/k}$-equivariant. It follows that there is a natural linear \mbox{$G$-action} on $\Hom((F_{Y/k})_* \cO_Y, \cO_{Y'})$. Furthermore, the `evaluation at one' map
\[ 
  \varepsilon \colon \Hom((F_{Y/k})_* \cO_Y, \cO_{Y'}) \ra H^0(Y', \cO_{Y'})
\]
is a homomorphism of $G$-representations. 

If $Y$ is integral and proper, then this is a map from a finite-dimensional $G$-representation to $k$, and since $Y$ is $F$-split, this map is surjective. By the linear reductivity of $G$, this map admits a $G$-equivariant splitting, and this way we obtain a $G$-invariant Frobenius splitting of $Y$.

In general, let $V = \varepsilon^{-1}(k)$, which is the increasing union of $G$-representations $V_i$ of finite dimension over $k$, and is endowed with a~$G$-invariant map $\varepsilon\colon V\to k$. Moreover, the map $\varepsilon$ is surjective since $Y$ is Frobenius split. For some $i$, the restriction $\varepsilon|_{V_i}\colon V_i\to k$ is surjective, and since $G$ is linearly reductive, there exists a~\mbox{$G$-equivariant} splitting $s\colon k\to V_i$ of $\varepsilon|_{V_i}$. 

In particular, $Y$ admits a $G$-invariant Frobenius splitting $\sigma_Y = s(1)$. This Frobenius splitting preserves $G$-invariant sections of $\cO_Y$, and hence it descends to a Frobenius splitting on $X$. Thus $\pi\colon (Y, \sigma_Y)\to (X, \sigma_X)$ is a map in the category $\mathbf{FSplit}$. Applying the canonical lifting functor from Theorem~\ref{thm:zdanowicz-canonical-lift} yields the desired lifting $\swt\pi\colon \wt Y(\sigma_Y)\to \wt X(\sigma_X)$ of $\pi$.
\end{proof}

The method of proof of Lemma~\ref{lem:galois_compatible_fsplit} yields the following interesting result which we will not need in the sequel. 

\begin{lemma}
  Let $Y$ be an $F$-split scheme over $k$ and let $G$ be a finite group of order prime to $p$ acting on $Y$.  Then there exists a lifting $\wt Y$ of $Y$ together with a~lifting of the $G$-action.
\end{lemma}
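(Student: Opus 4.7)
The plan is to mimic the construction from the proof of Lemma~\ref{lem:galois_compatible_fsplit}, but this time with the goal of directly lifting the $G$-action rather than a good quotient. Since $|G|$ is coprime to $p$, the group $G$ is linearly reductive over $k$ by Maschke's theorem, so the representation-theoretic inputs of that lemma are available.

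The first step is to produce a $G$-invariant Frobenius splitting $\sigma_Y$ on $Y$. The $k$-vector space $\Hom(F_{Y*}\cO_Y,\cO_Y)$ carries a natural linear $G$-action because $F_Y$ is $G$-equivariant, and the evaluation-at-one map $\varepsilon\colon \Hom(F_{Y*}\cO_Y,\cO_Y)\to H^0(Y,\cO_Y)$ is a homomorphism of $G$-representations whose image contains $1\in H^0(Y,\cO_Y)$ by $F$-splittability of $Y$. Setting $V:=\varepsilon^{-1}(k\cdot 1)$ and writing $V$ as the filtered union of its finite-dimensional $G$-subrepresentations $V_i$, the restriction $\varepsilon|_{V_i}\colon V_i\to k$ is surjective for some $i$, and by linear reductivity of $G$ this surjection admits a $G$-equivariant splitting $s\colon k\to V_i$. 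Then $\sigma_Y:=s(1)$ is a $G$-invariant Frobenius splitting on $Y$, exactly as in the proof of Lemma~\ref{lem:galois_compatible_fsplit}.

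Next, the $G$-invariance of $\sigma_Y$ is precisely the statement that every $g\in G$ gives an automorphism $(Y,\sigma_Y)\to(Y,\sigma_Y)$ in the category $\mathbf{FSplit}$. Applying the canonical lifting functor of Theorem~\ref{thm:zdanowicz-canonical-lift} produces a flat scheme $\wt Y:=\wt Y(\sigma_Y)$ over $W_2(k)$ together with the functorial identification $Y\isom \wt Y\otimes_{W_2(k)}k$, and by functoriality one obtains a group homomorphism $G\to \mathrm{Aut}_{W_2(k)}(\wt Y)$ which reduces modulo $p$ to the given $G$-action on $Y$.

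The only substantive step is producing the $G$-invariant $F$-splitting; everything else follows formally from functoriality of the canonical lifting. The novelty relative to Lemma~\ref{lem:galois_compatible_fsplit} is purely in interpretation: the output of the same construction is read as a lift of the $G$-action on $Y$ rather than as a lift of a quotient morphism $Y\to X$. I expect no additional obstacles beyond checking that the filtered-colimit argument of Lemma~\ref{lem:galois_compatible_fsplit} goes through verbatim for non-proper $Y$, which it does because $G$ is finite and every $G$-representation is the union of its finite-dimensional subrepresentations.
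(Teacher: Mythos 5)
Your proof is correct and follows exactly the route the paper indicates: the paper gives no separate proof, remarking only that ``the method of proof of Lemma~\ref{lem:galois_compatible_fsplit} yields'' the result. Your verification that the filtered-colimit argument produces a $G$-invariant splitting and that functoriality of $\wt Y(\sigma)$ then lifts the action is precisely the intended argument.
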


\subsection{Uniqueness of liftings admitting a lifting of Frobenius}
\label{ss:uniq}

We have seen in Proposition~\ref{prop:frobenius_cotangent_morphism} that Frobenius liftings induce  natural Frobenius splittings.  It is natural to ask whether the lifting induced by the Frobenius splitting associated to a Frobenius lifting $(\wt X, \wt F_X)$ of a normal $k$-scheme $X$ (indicated in Theorem~\ref{thm:zdanowicz-canonical-lift}) is canonically isomorphic to $\wt X$. In fact much more is true, as indicated by the following simple but surprising result.

\begin{thm} \label{thm:fsplit-liftings}
  Let $X$ be a scheme over $k$, let $\sigma$ be a Frobenius splitting on $X$, and let $(\wt X, \wt F_X)$ be a Frobenius lifting of $X$. Let $\wt X(\sigma)$ be the canonical lifting of $X$ induced by $\sigma$ (see Theorem~\ref{thm:zdanowicz-canonical-lift}). Then one has a canonical isomorphism $\wt X(\sigma)\isomto \wt X$ (depending on $\wt F_X$ and~$\sigma$) of liftings of $X$. 
\end{thm}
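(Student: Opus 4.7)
The plan is to construct an explicit $W_2(k)$-algebra homomorphism $\cO_{\wt X(\sigma)} \to \cO_{\wt X}$ which reduces to the identity on $\cO_X$; since both are flat $W_2(k)$-lifts of $\cO_X$, such a map is automatically an isomorphism (a surjection between two flat lifts of the same object is an iso). By Theorem~\ref{thm:zdanowicz-canonical-lift}, $\cO_{\wt X(\sigma)} = W_2(\cO_X)/V(\ker\sigma)$, so it suffices to build a ring map $\theta \colon W_2(\cO_X) \to \cO_{\wt X}$ factoring through $V(\ker\sigma)$ and recovering the identity on $\cO_X$.

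The key construction uses $\wt F_X$ and $\sigma$ as follows. For any $a \in \cO_X$ and any lift $\wt a \in \cO_{\wt X}$, the element $\wt F_X(\wt a) - \wt a^p$ lies in the square-zero ideal $p\cO_{\wt X} \isom \cO_X$, so defines a section $\phi(\wt a) \in \cO_X$. A short computation using $p^2 = 0$ shows that changing $\wt a$ to $\wt a + pc$ changes $\phi(\wt a)$ by $\bar c^p$; applying $\sigma$, it therefore changes $\sigma(\phi(\wt a))$ by $\bar c$. Setting
\[
  \wt\alpha(a) \;:=\; \wt a \,-\, p\cdot(\text{any lift of }\sigma(\phi(\wt a)))
\]
yields a well-defined set-theoretic section $\wt\alpha \colon \cO_X \to \cO_{\wt X}$ of the reduction. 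I then define
\[
  \theta \colon W_2(\cO_X) \ra \cO_{\wt X}, \qquad \theta(a_0, a_1) \,:=\, \wt\alpha(a_0) \,+\, p\cdot(\text{any lift of }\sigma(a_1)).
\]
By construction $\theta$ reduces to the identity on $\cO_X$ and kills $V(\ker\sigma)$, since the second summand only depends on $\sigma(a_1)$.

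The main work is verifying that $\theta$ is a ring homomorphism. Multiplicativity is the cleaner check: one computes directly $p\phi(\wt a\wt b) = p\bigl(\wt a^p\phi(\wt b) + \wt b^p\phi(\wt a)\bigr)$, and then the projection formula $\sigma(a^py) = a\sigma(y)$ yields $\wt\alpha(ab) = \wt\alpha(a)\wt\alpha(b)$; compatibility with the full Witt product $(a_0,a_1)(b_0,b_1) = (a_0b_0, a_0^p b_1 + a_1 b_0^p)$ then follows by expanding. Additivity is the heart of the argument: the Witt addition formula $(a_0,0)+(b_0,0) = (a_0+b_0, c(a_0,b_0))$ involves the polynomial $c(a_0,b_0) = -\tfrac{1}{p}\sum_{i=1}^{p-1}\binom{p}{i}a_0^ib_0^{p-i}$, and a direct expansion of $\wt F_X(\wt a + \wt b) - (\wt a + \wt b)^p$ produces the \emph{same} binomial sum, giving $\phi(\wt a+\wt b) = \phi(\wt a)+\phi(\wt b)+c(a,b)$. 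Applying $\sigma$ yields exactly the correction needed to make $\theta$ additive; compatibility with the Verschiebung component is then routine.

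Once $\theta$ is confirmed to be a ring homomorphism, the induced map $\cO_{\wt X(\sigma)} \to \cO_{\wt X}$ is a $W_2(k)$-algebra map between flat lifts of $\cO_X$ reducing to the identity, hence an isomorphism, depending only on $(\wt F_X, \sigma)$ as required. The principal obstacle is the additivity calculation: one must see that the Witt polynomial governing addition of ``Teichmüller-like'' terms in $W_2$ agrees on the nose with the cocycle measuring the nonlinearity of $\wt F_X$ on set-theoretic lifts, and that $\sigma$ is precisely the datum that glues them. This matching explains why the construction genuinely requires \emph{both} a Frobenius lift and a Frobenius splitting, and reflects the functorial origin of Zdanowicz's canonical lift.
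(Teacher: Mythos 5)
Your proposal is correct, but it proceeds by a genuinely different construction from the paper's. The paper produces a morphism in the \emph{opposite} direction, $\wt X(\sigma)\to \wt X$, simply as the composition $\nu_{\wt X,\wt F_X}\circ i_\sigma$ of two ready-made maps: the closed immersion $i_\sigma\colon \wt X(\sigma)\hookrightarrow W_2(X)$ coming directly from the description of $\cO_{\wt X(\sigma)}$ as a quotient of $W_2(\cO_X)$ (Theorem~\ref{thm:zdanowicz-canonical-lift}), and the canonical map $\nu_{\wt X,\wt F_X}\colon W_2(X)\to \wt X$ attached to any Frobenius lifting of a flat $W_2(k)$-scheme, defined on functions by $\nu^*(\tilde f)=(f,\delta(\tilde f))$ with $\wt F^*_X(\tilde f)=\tilde f^p + p\,\delta(\tilde f)$. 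The composite reduces to the identity mod $p$, hence is an isomorphism, and the proof is over in two lines with no explicit Witt-polynomial manipulation. You instead build the inverse ring map $\theta\colon W_2(\cO_X)\to\cO_{\wt X}$ by hand, from a multiplicative set-theoretic section $\wt\alpha$ corrected by $\sigma$, and must then verify the ring-homomorphism axioms directly. The computations you sketch --- the use of the projection formula $\sigma(a^p y)=a\,\sigma(y)$ for multiplicativity, and the additivity check that the Witt addition polynomial $c(a,b)=-\frac1p\sum_{i=1}^{p-1}\binom{p}{i}a^i b^{p-i}$ precisely measures the failure of $\phi=\delta$ to be additive --- are exactly the computations that the paper absorbs into the standard (and elsewhere established) fact that $\nu^*_{\wt X,\wt F_X}$ is a ring homomorphism. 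Your route is thus self-contained where the paper's is modular; the trade is that you re-derive the Witt-vector input rather than citing it. Note that your $\theta$ is literally the inverse of the paper's $i_\sigma^*\circ\nu^*_{\wt X,\wt F_X}$: since your $\phi$ is the paper's $\delta$, the composite sends $\tilde f\mapsto \tilde f - p\cdot\smash{\wt{\sigma(\delta(\tilde f))}} + p\cdot\smash{\wt{\sigma(\delta(\tilde f))}} = \tilde f$, so the two proofs yield the same canonical isomorphism.
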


\begin{proof}
The canonical lifting $\wt X(\sigma)$  comes with a natural closed immersion $i_\sigma\colon\wt X(\sigma)\to W_2(X)$. On the other hand, the Frobenius lifting $(\wt X, \wt F_X)$ induces a map 
\[ 
  \nu_{\wt X, \wt F_X} \colon W_2(X)\ra \wt X, \quad \nu^* (\wt f) = (\tilde f \text { mod } p, \delta(\wt f))
\]
where $\wt F{}^*_X(\wt f) = \wt f^p + p\cdot\delta(\wt f)$. The composition $\nu_{\wt X, \wt F_X}\circ i_\sigma \colon \wt X(\sigma)\ra \wt X$ restricts to the identity on $X$, and hence is an isomorphism of liftings of $\wt X$. This is the required map.
\end{proof}

The proof shows that $\nu_{\wt X, \wt F_X}\circ i_\sigma \colon \wt X(\sigma)\to \wt X$ fits into the commutative diagram
\[ 
  \xymatrix@R=1.5em{
      0\ar[r]  & \cO_X \ar[r] \ar[d]_{F^*_X} & \cO_{\wt X} \ar[r] \ar[d]_{\nu_{\wt X, \wt F_X}^*} & \cO_X\ar[r] \ar@{=}[d] & 0.\\
    0\ar[r] & F_* \cO_X \ar[d]_{\sigma} \ar[r] & W_2(\cO_X) \ar[r]\ar[d]_{ i_\sigma^*} & \cO_X \ar[r] \ar@{=}[d] & 0\\
    0\ar[r] & \cO_X \ar[r] & \cO_{\wt X(\sigma)} \ar[r] & \cO_X\ar[r] & 0,
  }
\]
where the left composition $\sigma \circ F_X^*$ is the identity.

\begin{cor} \label{cor:unique-f-lift}
  Let $X$ be a normal $k$-scheme, and let $(\wt X{}^{(i)}, \wt F{}_X^{(i)})$ for $i=1,2$ be two Frobenius liftings of $X$. Then $\wt X{}^{(1)} \isom \wt X{}^{(2)}$.
\end{cor}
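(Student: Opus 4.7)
The plan is to deduce this directly from Theorem~\ref{thm:fsplit-liftings}, using normality only to guarantee the existence of at least one Frobenius splitting.

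First, I would note that since $X$ is normal and $F$-liftable, Proposition~\ref{prop:frobenius_cotangent_morphism}(b) together with the remark immediately following it (via \cite[1.1.7(iii)]{BrionKumar}) produces a Frobenius splitting $\sigma$ on $X$; concretely, one can take the splitting associated to either Frobenius lifting $(\wt X^{(i)}, \wt F_X^{(i)})$. Nothing further about $\sigma$ will be needed.

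Next, I would apply Theorem~\ref{thm:fsplit-liftings} twice, once to each pair $(\wt X^{(i)}, \wt F_X^{(i)})$, with the same chosen splitting $\sigma$. This yields canonical isomorphisms
\[
  \wt X^{(1)} \xleftarrow{\,\sim\,} \wt X(\sigma) \xrightarrow{\,\sim\,} \wt X^{(2)}
\]
of liftings of $X$, whose composition provides the required isomorphism $\wt X^{(1)} \simeq \wt X^{(2)}$.

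There is essentially no obstacle here beyond invoking the right statements in the right order; the conceptual content has already been packaged into Theorem~\ref{thm:fsplit-liftings} and Theorem~\ref{thm:zdanowicz-canonical-lift}. The only point requiring a moment of care is that the theorem does not assume $\sigma$ to be the splitting induced by the given Frobenius lifting, so the same $\sigma$ may be used in both applications, which is what allows the two canonical liftings $\wt X(\sigma)$ to be literally identified. The normality hypothesis enters solely to guarantee that some $\sigma$ exists; without it, one has no $F$-splitting available to feed into Theorem~\ref{thm:fsplit-liftings}.
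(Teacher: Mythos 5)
Your proof is correct and follows the paper's (implicit) argument: the corollary is stated immediately after Theorem~\ref{thm:fsplit-liftings} with no separate proof, precisely because it is obtained by choosing one Frobenius splitting $\sigma$ (which exists since a normal $F$-liftable scheme is $F$-split by Proposition~\ref{prop:frobenius_cotangent_morphism}(b)) and applying the theorem to each lifting with that same $\sigma$. You also correctly identify the two points of care--that $\sigma$ need not be the splitting associated to the given lifting, and that normality enters only to guarantee some $\sigma$ exists.
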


\begin{remark}
If $X$ is smooth, then Corollary~\ref{cor:unique-f-lift} can be seen more directly as follows. The application of $\Hom(\Omega^1_X, -)$ to the short exact sequence
\[
  0\ra \cO_X \ra F_{X*} \cO_X \ra B^1_X\ra 0
\]
yields a connecting homomorphism $\delta \colon \Hom(\Omega^1_X, B^1_X)\to \Ext^1(\Omega^1_X, \cO_X)$. The forgetful map
\begin{equation} \label{eqn:forget-frob-lift}
   (\wt X, \wt F_X) \mapsto \wt X \colon
   \quad
   \{\text{Frobenius liftings of }X\}/\,\text{isom.} \ra \{\text{liftings of }X\}/\,\text{isom.}, 
\end{equation}
is a map from a torsor under $\Hom(\Omega^1_X, B^1_X)$ to a torsor under $\Ext^1(\Omega^1_X, \cO_X)$ which is equivariant with respect to the map $\delta$. If $X$ is $F$-liftable, it is $F$-split, and hence $\delta=0$. Thus \eqref{eqn:forget-frob-lift} is constant.
\end{remark}

\subsection{Descending and lifting Frobenius liftings} 

The results of \S\ref{ss:goodquot}--\ref{ss:uniq} allow us to extend \cite[Theorem~3.3.6]{PartI} in the following way.

\begin{thm}[Descending Frobenius liftability, {\cite[Theorem~3.3.6]{PartI}}] \label{thm:descending-frob-lift}
  Let $\pi \colon Y\to X$ be a morphism of schemes (essentially) of finite type over $k$ and let $(\wt Y, \wt F_{Y})$ be a Frobenius lifting of $Y$.
  \begin{enumerate}[(a)]
    \item Suppose that $\pi$ admits a lifting $\swt \pi\colon \wt Y\to \wt X$, and that one of the following conditions is satisfied:
    \begin{enumerate}[i.]
        \item $\pi^*\colon \cO_X \to R\pi_* \cO_Y$ is a split monomorphism in the derived category,
        \item $\pi$ is finite flat of degree prime to $p$,
        \item $Y$ satisfies condition $S_2$ and $\pi$ is an open immersion such that $X\setminus Y$ has codimension $>1$ in $X$.
    \end{enumerate}
    Then $F_X$ lifts to $\wt X$.  
    \item Suppose that one of the following conditions is satisfied:
      \begin{enumerate}[i.]
        \item $\cO_X\isomto \pi_* \cO_Y$ and $R^1\pi_* \cO_Y = 0$,
        \item $X$ and $Y$ are smooth and $\pi$ is proper and birational,
        \item $Y$ satisfies condition $S_3$ and $\pi$ is an open immersion such that $X\setminus Y$ has codimension $>2$ in $X$.
      \end{enumerate}
    Then there exists a unique pair of a Frobenius lifting $(\wt X, \wt F_X)$ of $X$ and a lifting $\swt \pi \colon \wt Y\to \wt X$ of $\pi$ such that $\wt F_X\circ \swt \pi = \swt \pi\circ \wt F_Y$. 
    \item Suppose that $Y$ is normal and that $\pi \colon Y\to X=Y/G$ is a good quotient by an action of a linearly reductive group $G$ on $Y$. Then there exists a lifting $\swt \pi \colon \wt Y\to \wt X$ of $\pi$ and a lifting $\wt F_X$ of $F_X$ to $\wt X$. 
  \end{enumerate}
\end{thm}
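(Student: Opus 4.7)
The plan is to combine the tools developed in this section: the Frobenius splitting induced by a Frobenius lifting (Proposition~\ref{prop:frobenius_cotangent_morphism}(b)), the $G$-equivariant Frobenius splitting for good quotients (Lemma~\ref{lem:galois_compatible_fsplit}), the canonical lifting functor (Theorem~\ref{thm:zdanowicz-canonical-lift}), the comparison theorem identifying these liftings (Theorem~\ref{thm:fsplit-liftings}), and finally part (a)(i) of the present theorem to lift Frobenius.

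First, since $Y$ is normal and carries a Frobenius lifting, Proposition~\ref{prop:frobenius_cotangent_morphism}(b) (combined with \cite[1.1.7(iii)]{BrionKumar}) yields that $Y$ is $F$-split. Applying Lemma~\ref{lem:galois_compatible_fsplit} to the good quotient $\pi\colon Y\to X$, I obtain Frobenius splittings $\sigma_Y$ on $Y$ and $\sigma_X$ on $X$ making $\pi\colon (Y,\sigma_Y)\to (X,\sigma_X)$ a morphism in $\mathbf{FSplit}$. Functoriality of the canonical lifting (Theorem~\ref{thm:zdanowicz-canonical-lift}) then produces a lifting
\[
  \swt\pi\colon \wt Y(\sigma_Y) \ra \wt X(\sigma_X)
\]
of $\pi$ over $W_2(k)$. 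The key step now is to invoke Theorem~\ref{thm:fsplit-liftings}, which provides a canonical isomorphism $\wt Y(\sigma_Y)\isomto \wt Y$ of liftings of $Y$ (depending on $\wt F_Y$ and $\sigma_Y$). Transporting $\swt\pi$ along this isomorphism, I obtain the desired lifting $\swt\pi\colon \wt Y \to \wt X$ of $\pi$, with $\wt X := \wt X(\sigma_X)$.

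It remains to produce a Frobenius lifting $\wt F_X$ on $\wt X$; for this I appeal to part (a)(i) of the theorem. The two hypotheses needed are that $\pi$ admits the lifting $\swt\pi\colon \wt Y\to\wt X$ (just constructed) and that $\pi^*\colon \cO_X\to R\pi_*\cO_Y$ is a split monomorphism in the derived category. Since $\pi$ is affine by the definition of a good quotient, one has $R\pi_*\cO_Y = \pi_*\cO_Y$, and the identification $\cO_X = (\pi_*\cO_Y)^G$ combined with the linear reductivity of $G$ (i.e.\ the existence of the Reynolds operator) furnishes the required $\cO_X$-linear splitting $\pi_*\cO_Y\to \cO_X$. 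Part (a)(i) then delivers a lifting $\wt F_X$ of $F_X$ to $\wt X$, completing the proof.

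The main potential obstacle is the compatibility issue: a priori the lifting $\wt X(\sigma_X)$ produced by the canonical lifting functor has no reason to receive the map $\swt\pi$ starting from our originally given $\wt Y$, as opposed to starting from $\wt Y(\sigma_Y)$. This is exactly the gap bridged by Theorem~\ref{thm:fsplit-liftings}, which ensures that for the normal scheme $Y$ the two liftings coincide canonically; without it, neither the existence of $\swt\pi\colon \wt Y\to \wt X$ nor the subsequent application of part (a) would be available.
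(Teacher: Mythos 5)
Your argument for part (c) coincides step for step with the paper's own proof: both deduce that $Y$ is $F$-split from Proposition~\ref{prop:frobenius_cotangent_morphism}, invoke Lemma~\ref{lem:galois_compatible_fsplit} for compatible splittings and the canonical lifting of $\pi$, use Theorem~\ref{thm:fsplit-liftings} to identify $\wt Y(\sigma_Y)$ with $\wt Y$, and then apply part (a)(i) via the Reynolds operator since $\pi$ is affine. (Parts (a) and (b), which you do not address, are simply cited from \cite[Theorem~3.3.6]{PartI} in the paper.)
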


In fact, conditions (a.ii) and (b.ii) imply (a.i) and (b.i), respectively. We do not expect $\wt F_X\circ \swt \pi = \swt \pi\circ \wt F_Y$ to hold in general in situations (a) and (c).

\begin{proof}
Parts (a) and (b) were proven in \cite[Theorem~3.3.6]{PartI}. For (c), note first that since $Y$ is normal and $F$-liftable, it is $F$-split. Lemma~\ref{lem:galois_compatible_fsplit} provides compatible Frobenius splittings $\sigma_Y$ and $\sigma_X$ and a lifting $\swt\pi\colon\wt Y(\sigma_Y)\to \wt X(\sigma_X)$ of $\pi$. By Theorem~\ref{thm:fsplit-liftings}, $\wt Y(\sigma_Y)\isom \wt Y$, and we set $\wt X = \wt X(\sigma_X)$, obtaining the required lifting of $\pi$. By the  definition of a good quotient, $\cO_X = (\pi_* \cO_Y)^G$ and $\pi$ is affine, in particular $R^i \pi_* \cO_Y = 0$ for $i>0$. Since $G$ is linearly reductive, $\cO_X = (\pi_* \cO_Y)^G \to \pi_* \cO_Y$ splits, and hence assumption (a.i) is satisfied.  We apply part (a) to conclude.
\end{proof}

\subsection{Products}

Using uniqueness of liftings admitting a Frobenius lifting developed above, we can also strenghten \cite[Corollary~3.3.7]{PartI} in the following way.

\begin{cor}[{\cite[Corollary~3.3.7]{PartI}}] \label{cor:flift-products} 
  Let $X$ and $Y$ be smooth and proper schemes over $k$.  Then $X\times Y$ is \mbox{$F$-liftable} if and only if $X$ and $Y$ are. Moreover, every Frobenius lifting of $X\times Y$ arises as a product of Frobenius liftings of the factors.
\end{cor}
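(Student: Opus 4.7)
\emph{Plan of proof.} The forward implication is immediate: given Frobenius liftings $(\wt X, \wt F_X)$ and $(\wt Y, \wt F_Y)$, the fibre product $(\wt X \times_{W_2(k)} \wt Y,\; \wt F_X \times \wt F_Y)$ will be a Frobenius lifting of $X \times Y$, as was already observed in \cite{PartI}. The reverse implication of the ``if and only if'' part was also established there. So the new content is concentrated in the ``moreover'' statement.

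For that, I would fix a Frobenius lifting $(\wt Z, \wt F_Z)$ of $Z := X \times Y$ and invoke \cite[Corollary~3.3.7]{PartI} to obtain Frobenius liftings $(\wt X, \wt F_X)$ and $(\wt Y, \wt F_Y)$ of the factors. The forward direction then produces a second Frobenius lifting $(\wt X \times \wt Y,\; \wt F_X \times \wt F_Y)$ of $Z$. Since $Z$ is smooth, and hence normal, Corollary~\ref{cor:unique-f-lift} would supply an isomorphism $\wt Z \isom \wt X \times \wt Y$ of $W_2(k)$-schemes lifting $Z$, which I would fix as an identification from here on.

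The remaining task is to show that under this identification $\wt F_Z$ itself is a product. The plan is a torsor argument. The set of lifts of $F_Z$ to $\wt X \times \wt Y$ is a torsor under $\Hom_{\cO_Z}(F_Z^* \Omega^1_{Z/k}, \cO_Z) \isom H^0(Z, F_Z^* T_{Z/k})$. Using the decomposition $\Omega^1_{Z/k} = \pi_X^* \Omega^1_{X/k} \oplus \pi_Y^* \Omega^1_{Y/k}$ (with $\pi_X,\pi_Y$ the projections), the relation $F_Z^* \circ \pi_X^* = \pi_X^* \circ F_X^*$ (and similarly for $Y$), the projection formula, and $H^0(X, \cO_X) = H^0(Y, \cO_Y) = k$ (reducing if necessary to the geometrically connected case), this torsor group will decompose as $H^0(X, F_X^* T_{X/k}) \oplus H^0(Y, F_Y^* T_{Y/k})$ — precisely the torsor group governing pairs of Frobenius lifts on $\wt X$ and $\wt Y$. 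The natural map ``pair of factor-lifts $\mapsto$ their product'' is equivariant for this identification of torsor groups, and its source is non-empty (it contains $\wt F_X \times \wt F_Y$), so it is bijective; consequently the given $\wt F_Z$ equals $\wt F_X' \times \wt F_Y'$ for some Frobenius lifts on the factors.

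The main obstacle is verifying the equivariant identification of torsor groups in the last step. This requires unwinding the Künneth decomposition of $H^0(Z, F_Z^* T_{Z/k})$ and confirming, by a local calculation in a product chart, that perturbing $\wt F_X$ by $\delta_X \in H^0(X, F_X^* T_{X/k})$ (and $\wt F_Y$ by $\delta_Y$) perturbs the product lift $\wt F_X \times \wt F_Y$ by exactly $(\pi_X^* \delta_X,\, \pi_Y^* \delta_Y)$. Everything else in the argument is a formal application of Corollary~\ref{cor:unique-f-lift} together with the results of \cite{PartI}.
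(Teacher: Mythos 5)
Your proposal follows essentially the same route as the paper: invoke \cite[Corollary~3.3.7]{PartI} for the factors, form the product Frobenius lifting, apply Corollary~\ref{cor:unique-f-lift} to identify the two liftings of $X\times Y$ as $W_2(k)$-schemes, and then compare spaces of Frobenius lifts on that fixed $\wt X\times\wt Y$ via the torsor under $H^0(X\times Y, F^*T_{X\times Y}) \isom H^0(X, F^*T_X)\oplus H^0(Y, F^*T_Y)$. The paper states this identification of torsor spaces somewhat tersely (citing \cite[Proposition~3.3.1]{PartI}), whereas you flag the equivariance of the ``pair of factor-lifts $\mapsto$ product lift'' map as a step to be checked; that is a reasonable point of care, but it is the same argument.
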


\begin{proof}
  The first assertion is \cite[Corollary~3.3.7]{PartI}. For the second part, we take a Frobenius lifting $(\widetilde{X \times Y},\wt F_{X \times Y})$. We already know that $X$~and $Y$ admit Frobenius liftings $(\wt X, \wt F_X)$ and $(\wt Y, \wt F_Y)$, and their product $(\wt X\times \wt Y, \wt F_X\times \wt F_Y)$ is another Frobenius lifting of $X\times Y$. But by Corollary~\ref{cor:unique-f-lift}, we must have 
\[
  \widetilde{X \times Y} \isom \wt X\times \wt Y.
\] 
Using \cite[Proposition~3.3.1]{PartI} we see that the space of Frobenius liftings on $\widetilde{X \times Y}$ is a torsor under $H^0(X\times Y, F^*T_{X \times Y})$.  The last group equals $H^0(X, F^* T_X) \oplus H^0(Y, F^*T_Y)$ which can be identified with the space of Frobenius liftings of the factors $X$ and $Y$.  This finishes the proof.
\end{proof}


\section{Frobenius liftings compatible with a divisor or a closed subscheme}
\label{s:froblift-compatible}

From the point of view of applications, it is necessary to consider Frobenius liftings of nc pairs. Recall that an nc (normal crossing) pair is a pair $(X, D)$ consisting of a smooth scheme $X$ and a divisor $D\subseteq X$ with normal crossings. A \emph{lifting} of $(X, D)$ over $W_2(k)$ is an nc pair $(\wt X, \wt D)$ where $\wt X$ is a lifting of $X$ and $\wt D\subseteq \wt X$ is an embedded deformation of $D$. Note that the requirement that $\wt D$ has normal crossings relative to $W_2(k)$ is not vacuous. By definition, a Frobenius lifting $(\wt X, \wt D, \wt F_X)$ of $(X, D)$ consists of a lifting $(\wt X, \wt D)$ of $(X, D)$ and a lifting $\wt F_X$ of $F_X$ to $\wt X$ satisfying $\wt F{}_X^*\wt D = p\wt D$. In this case, we shall say that $\wt F_X$ is \emph{compatible} with $\wt D$.

In the above situation, we can define a logarithmic variant of \eqref{eqn:def-xi}
\begin{equation} \label{eqn:def-log-xi}
  \xi = \frac 1 p \wt F{}_X^* \colon F_{X}^*\Omega^1_{X}(\log D) \to \Omega^1_{X}(\log D)
  \quad \text{(cf.\ \cite[\S 4.2]{DeligneIllusie}).}
\end{equation}

\begin{lemma} \label{lem:fsplit-comp-with-d}
  Let $(\wt X, \wt D, \wt F_X)$ be a Frobenius lifting of an nc pair $(X,D)$ over $k$. Then the $F$-splitting $\sigma$ on $X$ associated to the Frobenius lifting $(\wt X, \wt F_X)$ is compatible with the divisor $D$, i.e.\ $\sigma(F_* \cI_D)\subseteq \cI_D$ where $\cI_D\subseteq \cO_X$ is the ideal of $D$.
\end{lemma}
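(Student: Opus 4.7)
The plan is to use a logarithmic analogue of Proposition~\ref{prop:frobenius_cotangent_morphism}(b): the compatibility $\wt F_X^*\wt D = p\wt D$ allows us to enhance $\xi$ to a log morphism $\xi_{\log}$, producing a map $\sigma_{\log}\colon F_*\cI_D\to \cI_D$, and the desired compatibility $\sigma(F_*\cI_D)\subseteq \cI_D$ will follow once we identify $\sigma|_{F_*\cI_D}$ with $\iota\circ \sigma_{\log}$, where $\iota\colon \cI_D\hookrightarrow \cO_X$. This identification will be made by comparing two sections of a line bundle on the dense open $U = X\setminus D$.

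First I would verify that $\xi_{\log}$ from~\eqref{eqn:def-log-xi} is well-defined. For a local defining equation $\wt f$ of a component of $\wt D$, the conditions $\wt F_X^*\wt D = p\wt D$ and $\wt F_X^\#(\wt f)\equiv \wt f^p\pmod p$ force $\wt F_X^\#(\wt f) = \wt f^p(1+p\wt h)$ for some local section $\wt h$. A short calculation then gives $\wt F_X^*(d\log\wt f)\equiv p\bigl(d\log\wt f + d\wt h\bigr)\pmod{p^2}$, so dividing by $p$ is legitimate. Taking determinants yields $\det(\xi_{\log})\colon F^*\omega_X(D)\to \omega_X(D)$. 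Running the Grothendieck duality computation of Proposition~\ref{prop:frobenius_cotangent_morphism}(b) with $\cO_X(-D)$ in place of $\cO_X$, and using $\omega_X(D)^{1-p} = \omega_X^{1-p}(-(p-1)D)$, one obtains natural identifications
\[
  \Hom\bigl(F^*\omega_X(D),\omega_X(D)\bigr)\cong H^0\bigl(X,\omega_X(D)^{1-p}\bigr)\cong \Hom\bigl(F_*\cO_X(-D), \cO_X(-D)\bigr),
\]
under which $\det(\xi_{\log})$ corresponds to the desired map $\sigma_{\log}$.

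To conclude, I would view $\sigma|_{F_*\cI_D}$ and $\iota\circ\sigma_{\log}$ as elements of $\Hom(F_*\cI_D, \cO_X)\cong H^0(X,\omega_X^{1-p}(D))$. Under this identification the former is the image of $s=\det(\xi)$ via $\omega_X^{1-p}\hookrightarrow \omega_X^{1-p}(D)$, while the latter is the image of $s_{\log}=\det(\xi_{\log})$ via $\omega_X^{1-p}(-(p-1)D)\hookrightarrow \omega_X^{1-p}(D)$. Since $\xi_{\log}|_U = \xi|_U$ on $U = X\setminus D$, the two sections agree on $U$, and torsion-freeness of the line bundle $\omega_X^{1-p}(D)$ extends the equality to all of $X$. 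Hence $\sigma|_{F_*\cI_D} = \iota\circ\sigma_{\log}$, giving $\sigma(F_*\cI_D)\subseteq \cI_D$. The main obstacle will be the Grothendieck-duality functoriality bookkeeping that establishes the two correspondences just described; the local computation underlying $\xi_{\log}$ is otherwise routine (cf.~\cite[\S 4.2]{DeligneIllusie}).
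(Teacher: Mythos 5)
Your proposal is correct, and its backbone is the same as the paper's: both rely on the logarithmic morphism $\xi_{\log}$ and on the key observation that $\det(\xi)\in H^0(X,\omega_X^{1-p})$ is the image of $\det(\xi_{\log})\in H^0(X,\omega_X^{1-p}(-(p-1)D))$ under the natural inclusion, i.e.\ that $\det(\xi)$ vanishes to order $p-1$ along $D$. You establish this by noting $\xi_{\log}|_U=\xi|_U$ on $U=X\setminus D$ and extending by torsion-freeness; the paper records the same fact in a short commutative square. Where you diverge is in how you finish: the paper then simply cites \cite[1.3.11]{BrionKumar}, which is precisely the translation of such $(p-1)$-fold vanishing into compatibility of the associated $F$-splitting with $D$. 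You instead unwind that citation by hand --- constructing $\sigma_{\log}\colon F_*\cI_D\to\cI_D$ directly from $\det(\xi_{\log})$ via Grothendieck duality and then identifying $\sigma|_{F_*\cI_D}$ with $\iota\circ\sigma_{\log}$ inside $\Hom(F_*\cI_D,\cO_X)\cong H^0(X,\omega_X^{1-p}(D))$. Both are sound and equivalent; your route is longer but self-contained, the paper's is shorter but treats the Brion--Kumar lemma as a black box. The duality bookkeeping you defer is the only real content of your variant: you should in particular check that post-composition with $\iota\colon\cO_X(-D)\hookrightarrow\cO_X$ corresponds under duality to the inclusion $\omega_X^{1-p}(-(p-1)D)\hookrightarrow\omega_X^{1-p}(D)$, which follows from functoriality of $F^!$ applied to the inclusion $\cO_X(-D)\hookrightarrow\cO_X$.
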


\begin{proof}
The maps \eqref{eqn:def-log-xi} and \eqref{eqn:def-xi} fit inside a commutative square 
\[ 
  \xymatrix{
    & F_X^* \omega_X \ar[r] \ar[d] & \omega_X \ar[d] \\
    (F_X^* \omega_X)(pD) \ar@{=}[r] & F_X^* (\omega_X(D)) \ar[r] & \omega_X(D). 
  }
\]
In particular, the top map vanishes to order $p-1$ along $D$. We conclude by \cite[1.3.11]{BrionKumar}.
\end{proof}

\begin{lemma} \label{lem:flift-on-compatible-divisors}
  Let $(X,D)$ be an nc pair over $k$, and let $(\wt X, \wt D, \wt F_X)$ be a Frobenius lifting of $(X,D)$.  Let $D_1, \ldots, D_r$ be the irreducible components of $D$. Then for every $i=1, \ldots, r$, the Frobenius lifting $\wt F_X$ induces a Frobenius lifting $\wt F_{D_i}$ on $\wt D_i$ which is compatible with the divisor $(\bigcup_{j\neq i} \wt D_j)\cap \wt D_i \subseteq \wt D_i$.
\end{lemma}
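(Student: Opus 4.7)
The plan is to first establish that $\wt F_X^*\wt D_i = p\wt D_i$ for every irreducible component $\wt D_i$, after which the lemma follows formally. Granted this identity, the pullback of the ideal satisfies $\wt F_X^*\cI_{\wt D_i} = \cI_{\wt D_i}^p \subseteq \cI_{\wt D_i}$, which shows that $\wt F_X$ restricts to a morphism $\wt F_{D_i}\colon \wt D_i\to \wt D_i$; its reduction modulo $p$ is $F_X|_{D_i}=F_{D_i}$, so it is a Frobenius lifting. The compatibility of $\wt F_{D_i}$ with $\wt E_i := \wt D_i\cap\bigcup_{j\neq i}\wt D_j$ is then obtained by restricting the identity $\wt F_X^*\bigl(\sum_{j\neq i}\wt D_j\bigr) = p\sum_{j\neq i}\wt D_j$ to $\wt D_i$; since the components of an nc divisor meet transversely, each $\wt D_j\cap \wt D_i$ is a Cartier divisor on $\wt D_i$, so this restriction makes sense.

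The key identity is \'etale-local on $\wt X$, so near a point of $\wt D$ I would choose \'etale coordinates $\wt x_1,\ldots,\wt x_n$ such that $\wt D = V(\wt x_1\cdots\wt x_s)$ and locally $\wt D_j = V(\wt x_j)$ for $j=1,\ldots,s$. Since $\wt F_X$ lifts Frobenius, one may write $\wt F_X^*\wt x_i = \wt x_i^p + p\wt g_i$ for some $\wt g_i\in\cO_{\wt X}$, and the hypothesis $\wt F_X^*\wt D = p\wt D$ reads $\prod_{i=1}^s(\wt x_i^p + p\wt g_i) = \wt u\prod_{i=1}^s \wt x_i^p$ for some unit $\wt u = 1 + p\wt v$. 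Expanding the left-hand side modulo $p^2$ and comparing with the right-hand side gives $p\sum_k \wt g_k\prod_{j\neq k}\wt x_j^p \equiv p\wt v\prod_j \wt x_j^p \pmod{p^2}$, which by flatness (cancellation of $p$) reduces in $\cO_X = \cO_{\wt X}/p$ to
\[
  \sum_{k=1}^s g_k\prod_{j\neq k,\,j\leq s} x_j^p \;=\; v\prod_{j=1}^s x_j^p,
\]
where $g_k, v, x_j$ denote reductions modulo $p$. Since $x_1,\ldots,x_s$ form a regular sequence in the local ring of $X$ at the chosen point, every term on the left of the displayed equation except the $k$-th is divisible by $x_k^p$, and so is the right-hand side; hence $g_k\prod_{j\neq k}x_j^p \in (x_k^p)$, and because $\prod_{j\neq k}x_j^p$ is a non-zero-divisor modulo $(x_k^p)$ one concludes $g_k \in (x_k^p)$ for each $k$. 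Lifting back, $\wt g_k \in (\wt x_k^p, p)$ and $\wt F_X^*\wt x_k = \wt x_k^p(1 + p\wt e_k)$ is a unit multiple of $\wt x_k^p$; this is precisely $\wt F_X^*\wt D_k = p\wt D_k$.

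The principal obstacle is the local commutative-algebra extraction of $g_k\in(x_k^p)$ from the product identity. It relies on the smoothness of $X$ (so that the chosen coordinates form a regular sequence) and on an elementary but careful divisibility argument exploiting the non-zero-divisor property of $\prod_{j\neq k}x_j^p$ modulo $(x_k^p)$. Once this algebraic step is granted, the rest of the proof is purely formal, as indicated in the first paragraph.
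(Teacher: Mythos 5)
Your proof is correct and shares the paper's overall strategy of passing to local coordinates and restricting; where the two differ is in the amount of justification. The paper's proof simply asserts that $\wt F_X^*(\tilde f_j)=\swt u_j \tilde f_j^p$ holds for each $j$ individually, treating the component-wise statement as immediate from the pair hypothesis $\wt F_X^*\wt D = p\wt D$, whereas you supply a careful derivation of that component-wise fact from the global one: expanding $\prod_j(\wt x_j^p+p\wt g_j)=\wt u\prod_j\wt x_j^p$ modulo $p^2$, cancelling $p$ via flatness, and then using that in a regular local ring $\prod_{j\neq k}x_j^p$ is a non-zero-divisor modulo $(x_k^p)$ to extract $g_k\in(x_k^p)$. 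This step is genuinely needed (the equality of divisors $\sum_j \wt F_X^*\wt D_j = \sum_j p\wt D_j$ over $W_2(k)$ does not tautologically entail termwise equality), so your version patches a small elision in the paper's exposition; once $\wt F_X^*\wt D_i=p\wt D_i$ is established, the remaining formal restriction argument matches the paper exactly.
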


\begin{proof}
The question is local so we can assume that $X = \Spec R$ and $\wt{X} = \Spec \wt{R}$. Moreover, $D_j$ is the zero locus of $f_j \in R$, and $\wt{D}_j$ is the zero locus of $\tilde{f}_j \in \wt{R}$ where $1 \leq j \leq r$. Since $\wt{F}_X (\tilde f_j) = \swt u_j{\tilde f}_j^p$ for every $1 \leq j \leq r$ and some $\swt u_j \in \wt R{}^\times$,  we get an induced morphism $\wt{F}_{D_i} \colon \wt{D}_i \to \wt{D}_i$, where $\wt{D}_i = \Spec \wt{R}/\tilde{f}_i$, such that $\wt{F}_{D_i}(\tilde{f}_j) = \swt u_j \tilde{f}_j^p$ for $j\neq i$.
\end{proof}

Let $o_{X,D} \in \Ext^1(\Omega^1_{X}(\log D),B^1_X)$ be the obstruction to the existence of a Frobenius lifting $(\wt X, \wt D, \wt F_X)$ of a simple normal crossing pair $(X,D)$ (see \cite[Variant~3.3.2]{PartI}).  Consider the short exact sequence
\[
  0 \ra \Omega^1_{X} \ra \Omega^1_{X}(\log D) \ra \bigoplus_i \cO_{D_i} \ra 0,
\]
where $D = \sum D_i$ is a decomposition into irreducible components.  Analyzing the construction in \cite[Variant~3.3.2]{PartI} we see that the induced natural morphism 
\begin{equation} \label{eq:connecting_obstructions}
  \Ext^1\left(\Omega^1_{X}(\log D),B^1_X\right) \longrightarrow \Ext^1\left(\Omega^1_{X},B^1_X\right)
\end{equation}
maps $o_{X,D}$ into the obstruction $o_X$ to the existence of a Frobenius lifting of $X$. Moreover, if $o_{X,D}=0$, then, after fixing a Frobenius lifting of $(X,D)$, we can identify $\Hom\left(\Omega^1_{X},B^1_X\right)$ with the space of Frobenius liftings $(\wt X, \wt F_X)$ (see \cite[Proposition~3.3.1]{PartI}). With that, the natural morphism
\begin{equation} \label{eq:connecting_torsors_lifts}
  \Hom\left(\Omega^1_{X}(\log D),B^1_X\right) \ra \Hom\left(\Omega^1_{X},B^1_X\right)
\end{equation}
maps Frobenius liftings $(\wt X, \wt D, \wt F_X)$ to Frobenius liftings $(\wt X, \wt F_X)$.

\begin{lemma}\label{lem:negative_divisors} 
  Let $(X,D)$ be a simple normal crossing pair over $k$ such that 
  \[ 
    H^0(D_i,\cO_{D_i}(mD_i)) = 0 \quad  \text{for } 1 \leq m \leq p 
  \]
  and all irreducible components $D_i$ of $D$. Let $(\wt X, \wt F_X)$ be a Frobenius lifting of $X$. Then there exists a lifting $\wt D$ of $D$ with which $\wt F_X$ is compatible.
\end{lemma}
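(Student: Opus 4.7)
The plan is to reduce to a single smooth component $D = D_i$, identify the obstruction to constructing a Frobenius-compatible lift of $D$ with a global section of $\cO_{pD}(pD)$, and kill it via a filtration argument driven by the hypothesis.

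First, the SNC hypothesis and flatness of lifts make transversality automatic: constructing for each smooth irreducible component $D_i$ an embedded lift $\wt D_i$ individually compatible with $\wt F_X$, the union $\wt D = \sum \wt D_i$ is automatically SNC and satisfies $\wt F_X^* \wt D = p\wt D$. So we may assume $D$ is smooth irreducible. Fix an affine open cover $\{U_\alpha\}$ of $X$ on which $\cI_D|_{U_\alpha} = (f_\alpha)$, choose arbitrary lifts $\tilde f_\alpha \in \cO_{\wt X}(\wt U_\alpha)$, and form the local Frobenius discrepancy
\[
  \bar\delta_\alpha \;=\; \big(\wt F_X^*(\tilde f_\alpha) - \tilde f_\alpha^p\big)/p \bmod p \;\in\; \cO_X(U_\alpha).
\]
The principal ideal $(\tilde f_\alpha) \subseteq \cO_{\wt X}(\wt U_\alpha)$ is Frobenius-compatible if and only if $\bar\delta_\alpha \in (f_\alpha^p)$, equivalently iff $\bar\delta_\alpha$ vanishes in $\cO_{pD}$.

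The transition relation $\tilde f_\alpha = \tilde u_{\alpha\beta}\tilde f_\beta + pr_{\alpha\beta}$ on $U_{\alpha\beta}$, with $r_{\alpha\beta} \in \cO_X$, yields
\[
  \bar\delta_\alpha \;\equiv\; u_{\alpha\beta}^p\,\bar\delta_\beta + r_{\alpha\beta}^p \pmod{f_\alpha^p},
\]
while the admissible modifications $\tilde f_\alpha \mapsto \tilde f_\alpha + pa_\alpha$ and $\tilde u_{\alpha\beta} \mapsto \tilde u_{\alpha\beta} + ps_{\alpha\beta}$ act on $(\bar\delta_\alpha, r_{\alpha\beta})$ by $(a_\alpha^p,\, a_\alpha - u_{\alpha\beta}a_\beta - s_{\alpha\beta}f_\beta)$. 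Using the vanishing hypothesis for the intermediate values $1 \le m \le p-1$ to normalize away the $p$-semilinear correction $\{r_{\alpha\beta}^p\}$, the classes $\{\bar\delta_\alpha\bmod f_\alpha^p\}$ patch into a genuine global section
\[
  \omega \;\in\; H^0(pD, \cO_{pD}(pD)),
\]
whose vanishing is equivalent to the existence of Frobenius-compatible local lifts. The filtration $D \subseteq 2D \subseteq \ldots \subseteq pD$ has successive graded pieces $\cI_D^j/\cI_D^{j+1} \cong \cO_D(-jD)$, which after twisting by $\cO_X(pD)$ produce short exact sequences
\[
  0 \to \cO_D((p-j)D) \to \cO_{(j+1)D}(pD) \to \cO_{jD}(pD) \to 0
  \qquad (0 \le j \le p-1).
\]
Induction on $j$, starting from $H^0(D, \cO_D(pD)) = 0$ (hypothesis with $m=p$) and using $H^0(D, \cO_D((p-j)D))=0$ at each step ($m=p-j \in \{1,\ldots,p-1\}$), gives $H^0(pD, \cO_{pD}(pD))=0$, whence $\omega = 0$.

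Thus the local lifts may be chosen so that every $(\tilde f_\alpha)$ is Frobenius-compatible. Any two Frobenius-compatible lifts $\tilde f$ and $\tilde f + pa$ of the same local ideal satisfy $\bar a^p \in (f^p)$, hence $\bar a \in (f)$ and $\tilde f + pa = \tilde f(1 + pb)$ is a unit multiple of $\tilde f$; so the local compatible ideals are unique and glue to a global invertible ideal $\wt\cI \subseteq \cO_{\wt X}$, and the associated divisor $\wt D$ is the required Frobenius-compatible lift. The main obstacle is producing the global section $\omega$ from the Čech data $\{\bar\delta_\alpha, r_{\alpha\beta}\}$: the correction $r_{\alpha\beta}^p$ is nonlinear in the local lifts and must be absorbed by careful modifications of $(\tilde f_\alpha, \tilde u_{\alpha\beta})$, which is precisely why the full range $1 \le m \le p$ of the vanishing hypothesis is needed rather than only the case $m=p$ entering the final filtration step.
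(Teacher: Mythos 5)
Your plan is a Čech rendering of the paper's obstruction-theoretic argument, and the filtration killing $H^0(pD,\cO_{pD}(pD))$ at the end is exactly the paper's computation. The gap is in the middle: the claim that, after ``normalizing away the $p$-semilinear correction $\{r_{\alpha\beta}^p\}$,'' the local classes $\{\bar\delta_\alpha \bmod f_\alpha^p\}$ patch into a genuine global section of $\cO_{pD}(pD)$. Trace through what the allowed modifications do. Changing $\tilde f_\alpha \mapsto \tilde f_\alpha + pa_\alpha$ and $\tilde u_{\alpha\beta} \mapsto \tilde u_{\alpha\beta}+ps_{\alpha\beta}$ replaces $r_{\alpha\beta}$ by $r_{\alpha\beta}+a_\alpha-u_{\alpha\beta}a_\beta-s_{\alpha\beta}f_\beta$. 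To get $r_{\alpha\beta}^p\equiv 0\pmod{f_\alpha^p}$ you need $r_{\alpha\beta}\equiv 0\pmod{f_\alpha}$, and since $s_{\alpha\beta}f_\beta$ is already in $(f_\alpha)$, this amounts to trivializing the Čech $1$-cocycle $\{\,r_{\alpha\beta}/f_\alpha\,\}$ with values in $N_{D/X}=\cO_D(D)$ by a coboundary $\partial\{a_\alpha/f_\alpha\}$. That is precisely the obstruction to gluing the local embedded deformations $\wt D_\alpha = V(\tilde f_\alpha)$ into a global embedded deformation $\wt D\subseteq\wt X$, and it lives in $H^1(D,\cO_D(D))$ --- a group your hypothesis says nothing about. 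The hypothesis is purely an $H^0$ statement and does not bound $H^1(D,\cO_D(D))$: take $X=\FF_n$ with $n\geq 2$ and $D=C$ the negative section; then $H^0(D,\cO_D(mD))=H^0(\PP^1,\cO(-mn))=0$ for all $m\geq 1$, while $H^1(D,\cO_D(D))=H^1(\PP^1,\cO(-n))\neq 0$. So the normalization step is not justified by what you invoke; it is, in effect, assuming part of the conclusion (the existence of a global lift $\wt D$ at all).

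The paper avoids this entirely by refusing to separate the two problems. It writes down a single obstruction $o_{X,D}\in\Ext^1(\Omega^1_X(\log D),B^1_X)$ for the \emph{pair} $(\wt X,\wt D,\wt F_X)$, observes that $o_{X,D}$ maps to $o_X=0$ under the map induced by $0\to\Omega^1_X\to\Omega^1_X(\log D)\to\bigoplus_i\cO_{D_i}\to 0$, and therefore $o_{X,D}$ lies in the image of $\Ext^1\bigl(\bigoplus_i\cO_{D_i},B^1_X\bigr)$. Because $B^1_X$ is locally free and each $\cO_{D_i}$ has projective dimension one, the local-to-global spectral sequence collapses this $\Ext^1$ to an $H^0$ group, namely $\bigoplus_i H^0\bigl(D_i,B^1_X(D_i)|_{D_i}\bigr)\subseteq\bigoplus_i H^0\bigl(pD_i,\cO_{pD_i}(pD_i)\bigr)$, and that is where your filtration argument applies. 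No $H^1$ ever enters. If you want to run a Čech proof, the right object to represent is the image of $o_{X,D}$ in this $\Ext^1$, which packages the cochain $\{\bar\delta_\alpha\}$ and the cocycle $\{r_{\alpha\beta}\}$ into a single hypercohomology class for the two-term complex $[B^1_X\to B^1_X(D)]$; computing it as a $0$-cochain of $\cO_{pD}(pD)$ alone, as you do, loses the $r_{\alpha\beta}$-datum and forces you to kill it by hand, which you cannot.

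One further small point: your concluding remark that ``only the case $m=p$ enters the final filtration step'' is not quite right even for the filtration you write down; the $j$-th short exact sequence uses $H^0(D,\cO_D((p-j)D))=0$ for $j=1,\dots,p-1$, so the entire range $1\leq m\leq p$ is consumed there, exactly as in the paper.
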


In particular, if $Y = \Bl_Z X$ is a blow-up of a smooth $k$-scheme $X$ in a smooth center $Z \subset X$, then the unique embedded deformation $\wt E \subset \wt Y$ of the exceptional divisor $E$ is preserved by every Frobenius lifting of $\wt Y$.

\begin{proof}
Let $o_X$ and $o_{X,D}$ be the obstruction classes as above. Since $o_X=0$ and $o_{X,D}$ is mapped to it by (\ref{eq:connecting_obstructions}), we get that $o_{X,D}$ is the image of an element of $\Ext^1(\bigoplus \cO_{D_i},B^1_X)$.  
Moreover, if this cohomology group vanishes, then (\ref{eq:connecting_torsors_lifts}) is surjective. Hence, in order to prove the lemma, it is enough to show that this cohomology group is zero.  To this end, we apply the local to global spectral sequence to see that 
\[
  \Ext^1\left(\bigoplus \cO_{D_i},B^1_X\right) \isom H^0\left(X,\cExt^1\left(\bigoplus \cO_{D_i},B^1_X\right)\right).
\] 
Using the short exact sequence
\[
  0 \ra \cO_X(-D_i) \ra \cO_X \ra \cO_{D_i} \ra 0
\]
we compute $\cExt^1(\bigoplus_{i=1}^r \cO_{D_i},B^1_X)$ as the cokernel of the mapping:
\[
  \bigoplus \cHom\left(\cO_X,B^1_X\right) \ra \bigoplus \cHom\left(\cO_X(-D_i),B^1_X\right),
\]
which is equal to $\bigoplus_{i=1}^r B^1_X(D_i)|_{D_i}$, because $B^1_X$ is locally free. Since $X$ is $F$-split (see Proposition~\ref{prop:frobenius_cotangent_morphism}), we have 
\[
  B^1_X\left(D_i\right)|_{D_i} 
  \subseteq \big(\cO_X(D_i) \otimes F_{X *} \cO_X\big)|_{D_i} 
  = F_{X*}(\cO_{pD_i}(pD_i)).
\] 
Here we used that $(F_{X *} \cO_X)|_{D_i}$ is the cokernel of 
\[ 
  F_{X*}(\cO_X(-pD_i)) = \cO_X(-D_i) \otimes F_{X*} \cO_X \hookrightarrow F_{X *} \cO_X,
\] 
and so it is equal to $F_{X*} \cO_{pD_i}$. 

Hence, it is enough to show that $H^0(pD_i, \cO_{pD_i}(pD_i))=0$. This follows by inductively looking at the global sections in the short exact sequences
\[
  0 \ra \cO_{D_i}((p- m+1)D_i) \ra \cO _{mD_i}(pD_i) \ra \cO_{(m-1)D_i}(pD_i) \ra 0
  \quad 2 \leq m \leq p.  \qedhere
\]
\end{proof}

\begin{defin} \label{def:flift-compatible}
  Let $X$ be a $k$-scheme, let $Z\subseteq X$ be a closed subscheme, let $(\wt X, \wt F_X)$ be a Frobenius lifting of $X$, and let $\wt Z\subseteq \wt X$ be an embedded deformation of $Z$. We say that $\wt F_X$ is \emph{compatible} with $\wt Z$ if 
  \[ 
    \wt F_X^{\#}(\cI_{\wt Z}) \subseteq \cI_{\wt Z}^p,
  \]
  or, in other words, if the image of $\wt F{}_X^* (\cI_{\wt Z})\to \cO_{\wt X}$ is contained in $\cI_{\wt Z}^p$.
\end{defin}

In particular, if $X$ is smooth and $D\subseteq X$ is a divisor with normal crossings, then $\wt F_X$ is compatible with a lifting $\wt D$ with relative normal crossings if and only if it is compatible with $\wt D$ as a closed subscheme in the sense of the above definition. In fact, if $\wt D\subseteq\wt X$ is an embedded deformation of $D$ with which $\wt F_X$ is compatible, then $\wt D$ automatically has relative normal crossings, see Corollary~\ref{cor:unique-f-lift}.  

We shall now discuss $F$-liftability of blow-ups. Let $X$ be a smooth scheme over $k$, let $Z\subseteq X$ be a smooth closed subscheme of codimension $>1$, let $\pi\colon Y\to X$ be the blowing-up of $X$ along $Z$, and let $E={\rm Exc}(\pi)$ be the exceptional divisor. In the following, we relate $F$-liftability of $X$ and of $Y$.

\begin{lemma}[{\cite[Proposition~2.2]{liedtke_satriano}}] \label{lem:liedtke_satriano_blowup}
  In the above situation, the natural transformations of deformation functors
  \[ 
    \Def_{X, Z} \ra \Def_{Y, E} \ra \Def_Y
  \]
  are isomorphisms. The composition $\Def_Y\isom\Def_{X, Z}\to \Def_X$ is given by the association
  \[
  A \in {\rm Art}_{W(k)}(k) \quad \Def_Y(A) \ni (Y,\cO_{\wt Y}) \mapsto (X,\cO_{\wt X}) = (X,\pi_*\cO_{\wt Y}) \in \Def_X(A),
  \]
  where we identify a deformation of a scheme with a thickening of its structure sheaf supported on the same topological space.
\end{lemma}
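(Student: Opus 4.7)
The plan is to establish the two isomorphisms separately, and then to read off the formula for the composition $\Def_Y \to \Def_X$ from the inverse constructed for the first arrow. For the forgetful map $\Def_{Y,E}\to \Def_Y$, I would apply standard embedded deformation theory: the fibre of $\Def_{Y,E}(A')\to \Def_{Y,E}(A)\times_{\Def_Y(A)} \Def_Y(A')$ over a small extension $0\to I\to A'\to A\to 0$ is a torsor under $H^0(E, N_{E/Y})\otimes_k I$, with obstructions in $H^1(E, N_{E/Y})\otimes_k I$. Writing $c$ for the codimension of $Z$ in $X$ (so $c\geq 2$), the exceptional divisor is the projective bundle $q\colon E = \PP(N_{Z/X}^{\vee})\to Z$, and its normal bundle in $Y$ is the relative tautological line bundle $\cO_E(-1)$. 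Since $H^j(\PP^{c-1},\cO(-1))=0$ for every $j$, cohomology-and-base-change combined with the projection formula give $R q_*\cO_E(-1) = 0$, whence $H^i(E, N_{E/Y}) = 0$ for all $i$. Existence and uniqueness of $\wt E$ inside any $\wt Y$ then follow, so $\Def_{Y,E}\to\Def_Y$ is an isomorphism of functors.

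For the first arrow $\Def_{X,Z}\to\Def_{Y,E}$ I would exhibit mutually inverse natural transformations. The forward one sends $(\wt X,\wt Z)$ to $(\Bl_{\wt Z}\wt X,\wt E)$, with $\wt E$ the associated Cartier exceptional divisor. An inverse is defined by direct image: viewing deformations as thickenings of structure sheaves on the fixed topological spaces $|X|$ and $|Z|$, one sets $\cO_{\wt X}:=\pi_*\cO_{\wt Y}$ and $\cO_{\wt Z}:=\pi_*\cO_{\wt E}$. Flatness of these pushforwards over the Artinian base, together with compatibility with further base change, follows from the classical vanishings $\pi_*\cO_Y=\cO_X$, $R^i\pi_*\cO_Y=0$ for $i>0$, and their analogues for $\cO_E\to \cO_Z$, fed into the long exact sequence of $R\pi_*$ applied to the standard short exact sequence attached to a small extension. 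To verify the round-trips, I would use the Rees-algebra description $\Bl_{\wt Z}\wt X = \Proj_{\wt X}\bigl(\bigoplus_{n\geq 0}\cI_{\wt Z}^n\bigr)$ combined with the natural identification $\pi_*\cO_{\wt Y}(-n\wt E)\isom \cI_{\wt Z}^n$ for $n\geq 0$.

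The description of $\Def_Y\isom\Def_{X,Z}\to\Def_X$ as $\wt Y\mapsto (X,\pi_*\cO_{\wt Y})$ is then immediate from the inverse constructed in the previous paragraph. The main obstacle I anticipate is the flatness bookkeeping: one must check that $\pi_*\cO_{\wt Y}$ and $\pi_*\cO_{\wt E}$ remain flat over the Artinian base and commute with further base change, and that the Rees algebra $\bigoplus_{n\geq 0}\cI_{\wt Z}^n$ is compatible with deformations so that its relative $\Proj$ really reproduces $\wt Y$. All of this reduces to the two higher-direct-image vanishings recalled above, but is cleanest to verify inductively over a filtration of the base by small extensions, as in the argument of Liedtke--Satriano.
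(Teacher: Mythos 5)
The paper gives no proof of this lemma; it is stated as a direct citation to Liedtke--Satriano \cite[Proposition~2.2]{liedtke_satriano}. Your blind proof is correct and follows what is almost certainly the same line as the cited source: the isomorphism $\Def_{Y,E}\to\Def_Y$ via the vanishing $H^i(E,N_{E/Y})=H^i(E,\cO_E(-1))=0$ (obtained from $Rq_*\cO_E(-1)=0$ for the $\PP^{c-1}$-bundle $q\colon E\to Z$), and the isomorphism $\Def_{X,Z}\to\Def_{Y,E}$ via the pair of mutually inverse transformations $(\wt X,\wt Z)\mapsto(\Bl_{\wt Z}\wt X,\wt E)$ and $(\wt Y,\wt E)\mapsto(\pi_*\cO_{\wt Y},q_*\cO_{\wt E})$, with flatness and base-change controlled by $R^{>0}\pi_*\cO_Y=0$, $R^{>0}q_*\cO_E=0$, and $\pi_*\cO_Y(-nE)\isom\cI_Z^n$.

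Two small points worth tightening if you were to write this out in full. First, in the forward direction you should say explicitly why $\Bl_{\wt Z}\wt X$ is flat over the Artinian base and reduces to $Y$: since $Z$ is a regular embedding, $\cI_{\wt Z}^n/\cI_{\wt Z}^{n+1}\isom\Sym^n(\cI_{\wt Z}/\cI_{\wt Z}^2)$ is a locally free $\cO_{\wt Z}$-module, so each $\cI_{\wt Z}^n$ is $A$-flat by d\'evissage and the Rees algebra commutes with base change to $k$. Second, for the round-trip $\wt Y\isomto\Proj_{\wt X}\bigoplus_{n\geq 0}\pi_*\cO_{\wt Y}(-n\wt E)$ one also needs that $\cO_{\wt Y}(-\wt E)$ is $\swt\pi$-ample, which is automatic from openness of relative ampleness and $\cO_Y(-E)$ being the relative $\cO(1)$. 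Both are routine, and your Rees-algebra sketch plus the invocation of the two higher direct image vanishings covers the substance.
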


Let now $\wt Y$ be a lifting of $Y$ over $\wt S$. By Lemma~\ref{lem:liedtke_satriano_blowup}, there exist unique liftings of $\wt E$, $\wt X$, and $\wt Z$ fitting inside a commutative square
  \[
    \xymatrix{
      \wt E \ar[r] \ar[d] & \wt Y \ar[d]^{\swt \pi} \\
      \wt Z \ar[r] & \wt X.
    }
  \]
Moreover, by Theorem~\ref{thm:descending-frob-lift}(b), the map $\swt\pi$ induces an injection
  \[ 
    \left\{ \text{liftings of }F_Y\text{ to }\wt Y \right\} 
    \quad \ra \quad 
    \left\{ \text{liftings of }F_X\text{ to }\wt X \right\}.
  \]

\begin{prop} \label{prop:blow-ups}
  A lifting $\wt F_X$ of $F_X$ to $\wt X$ is in the image of the above map (that is, $\wt F_X$ extends to $\wt Y$) if and only if it is compatible with $\wt Z$ in the sense of Definition~\ref{def:flift-compatible}.
\end{prop}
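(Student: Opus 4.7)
The plan is to use the universal property of the blow-up $\swt\pi \colon \wt Y \to \wt X$ to rephrase the existence of a lift $\wt F_Y$ of $F_Y$ satisfying $\swt\pi \circ \wt F_Y = \wt F_X \circ \swt\pi$ as an invertibility statement. Setting $h := \wt F_X \circ \swt\pi \colon \wt Y \to \wt X$, such a lift exists precisely when the ideal $h^{-1}(\cI_{\wt Z}) \cdot \cO_{\wt Y}$ is invertible on $\wt Y$. Two identities from the theory of smooth blow-ups will be used throughout: the pullback relation $\swt\pi^{-1}(\cI_{\wt Z}) \cdot \cO_{\wt Y} = \cI_{\wt E}$ (evident from Lemma~\ref{lem:liedtke_satriano_blowup} and the construction preceding the proposition), and the pushforward identity $\swt\pi_* \cO_{\wt Y}(-m \wt E) = \cI_{\wt Z}^m$ for $m \geq 0$. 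Both are classical over $k$; their extensions to $W_2(k)$ follow from the flatness of $\cI_{\wt Z}^m$ together with a Nakayama argument on $W_2(k)$-modules, exploiting that the candidate cokernels are coherent, annihilated by $p^2$, and vanish modulo $p$.

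For the ``if'' direction I would assume that $\wt F_X$ is compatible with $\wt Z$. Then compatibility and the pullback relation yield
\[
  h^{-1}(\cI_{\wt Z}) \cdot \cO_{\wt Y} = \swt\pi^{-1}(\wt F_X^{\#}(\cI_{\wt Z})) \cdot \cO_{\wt Y} \subseteq \swt\pi^{-1}(\cI_{\wt Z}^p) \cdot \cO_{\wt Y} = \cI_{\wt E}^p.
\]
Modulo $p$, the identity $F_X \circ \pi = \pi \circ F_Y$ combined with $F_Y^{\#}(\cI_E) \cdot \cO_Y = \cI_E^p$ (since $\cI_E$ is locally principal and the absolute Frobenius raises sections to the $p$-th power) forces this inclusion to become an equality over $k$. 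The cokernel $\cI_{\wt E}^p / h^{-1}(\cI_{\wt Z}) \cdot \cO_{\wt Y}$ is therefore a coherent sheaf on $\wt Y$ vanishing modulo $p$, and the Nakayama argument mentioned above forces it to vanish on all of $\wt Y$. Hence $h^{-1}(\cI_{\wt Z}) \cdot \cO_{\wt Y} = \cI_{\wt E}^p$ is invertible, and the universal property of the blow-up provides a unique $\wt F_Y \colon \wt Y \to \wt Y$ with $\swt\pi \circ \wt F_Y = h$. The reduction $\wt F_Y \bmod p$ must coincide with $F_Y$ by the uniqueness of the analogous factorization over $k$.

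For the ``only if'' direction I would assume that $\wt F_Y$ with $\swt\pi \circ \wt F_Y = \wt F_X \circ \swt\pi$ exists. By the remark following Lemma~\ref{lem:negative_divisors}, the exceptional divisor $\wt E$ is automatically preserved by $\wt F_Y$, so $\wt F_Y^{\#}(\cI_{\wt E}) \cdot \cO_{\wt Y} = \cI_{\wt E}^p$. For any local section $\tilde f$ of $\cI_{\wt Z}$, the pullback $\swt\pi^{\#}(\tilde f)$ lies in $\cI_{\wt E}$, and the commutation $\wt F_Y^{\#} \circ \swt\pi^{\#} = \swt\pi^{\#} \circ \wt F_X^{\#}$ then gives $\swt\pi^{\#}(\wt F_X^{\#}(\tilde f)) \in \cI_{\wt E}^p$. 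Applying $\swt\pi_*$ and invoking the pushforward identity yields $\wt F_X^{\#}(\tilde f) \in \swt\pi_* \cO_{\wt Y}(-p\wt E) = \cI_{\wt Z}^p$, which is precisely the compatibility of $\wt F_X$ with $\wt Z$. The step I expect to be most delicate is making the pushforward identity rigorous over $W_2(k)$; once that is in place (via flatness and the Nakayama-type argument described above), both directions follow from the universal property together with the automatic compatibility of any lift of $F_Y$ with $\wt E$.
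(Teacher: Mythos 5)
Your argument is correct but follows a genuinely different route from the paper's. The paper proves the ``if'' direction by writing $\wt Y = \Proj \bigoplus_n \wtcI^n$ and directly constructing the map of graded $\cO_{\wt X}$-algebras $\wt F{}_X^*(\wtcI^n)\to\wtcI^{np}$; it then checks that the induced rational map is everywhere defined by comparing with the Frobenius mod $p$. For the ``only if'' direction the paper passes to local coordinates, writes $\wt F{}_X^*\tilde x_i=\tilde x_i^p+pf_i$, and shows that the existence of the extension on each affine chart of the blow-up forces $f_i\in\cI^p$. You instead invoke the universal property of the blow-up (reducing ``$\wt F_X$ extends'' to invertibility of $h^{-1}(\cI_{\wt Z})\cdot\cO_{\wt Y}$ for $h=\wt F_X\circ\swt\pi$), and you use the automatic preservation of $\wt E$ granted by the remark after Lemma~\ref{lem:negative_divisors} together with the pushforward identity $\swt\pi_*\cO_{\wt Y}(-m\wt E)=\cI_{\wt Z}^m$. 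Both of your key identities over $W_2(k)$ do hold, and your Nakayama reductions are valid since $p$ is nilpotent in $W_2(k)$ and all cokernels in play are coherent. The one place you should flesh out is the pushforward identity over $W_2(k)$: the cleanest justification is that $R^i\pi_*\cO_Y(-mE)=0$ for $i>0$ on the special fiber, so by cohomology-and-base-change $\swt\pi_*\cO_{\wt Y}(-m\wt E)$ is flat with formation commuting with reduction mod $p$, and then the natural map from the flat sheaf $\cI_{\wt Z}^m$ is an isomorphism mod $p$, hence an isomorphism. Compared with the paper, your proof is more conceptual (universal property, base change) while the paper's is more elementary and hands-on, at the cost of an explicit but slightly opaque ideal-membership computation in the ``only if'' direction; both establish the same statement under the same hypotheses.
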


\begin{proof}
Let $\wt F_X$ be a lifting of $F_X$ to $\wt X$. Suppose that $\wt F_X$ is compatible with $\wt Z$. If $\cI$ (resp.\ $\wtcI$) is the ideal sheaf of $Z$ (resp.\ $\wt Z$), then $Y = \Proj \bigoplus_{n\geq 0} \cI^n =  \Proj \bigoplus_{n\geq 0} \cI^{np}$ and $\wt Y = \Proj \bigoplus_{n\geq 0} \wtcI^n=\Proj \bigoplus_{n\geq 0} \wtcI^{np}$. The relative Frobenius $F_{Y/X}$ corresponds to the map of graded $\cO_X$-algebras
\begin{align}
  F_{Y/X}^*\colon F_X^* (\bigoplus_{n\geq 0} \cI^{n} ) \ra \bigoplus_{n\geq 0} (\cI^n)^{[p]} \ra \bigoplus_{n\geq 0} \cI^{np} \label{eq:inclusion_rel_frob}
\end{align}
induced by the inclusion. Since by assumption $\wt F{}_X^* \wtcI$ maps into $\wtcI^p$, we see that $\wt F{}_X^* \wtcI^n$ maps into $\wtcI^{np}$ for all $n\geq 0$, and we can define a map of graded $\cO_{\wt X}$-algebras
\begin{align}
  \wt F{}_{Y/X}^*\colon \wt F{}_X^* ( \bigoplus_{n\geq 0} \wtcI^{n} ) = \bigoplus_{n\geq 0} \wt F{}_X^* (\wtcI^n) \ra \bigoplus_{n\geq 0} \wtcI^{np} \label{eq:inclusion_rel_frob2}
\end{align}
again induced by the inclusion. We claim that the rational map induced by $\smash{\wt F{}^*_{Y/X}}$ is defined everywhere, and is the identity on the complement of $\wt Z$.  It suffices to show that there are no relevant homogeneous prime ideals whose preimage under \eqref{eq:inclusion_rel_frob2} becomes irrelevant.  To see this, we first observe that this property holds for the inclusion \eqref{eq:inclusion_rel_frob}, since it induces a~well-defined morphism $F_{Y/X}$.  The inclusion \eqref{eq:inclusion_rel_frob2} is only a nilpotent extension of \eqref{eq:inclusion_rel_frob}, and hence its action on homogeneous ideals is the same.  This finishes the proof of the claim.  The composition of $\smash{\wt F_{Y/X}}$ with the projection $\wt Y{}' \to \wt Y$ gives the desired extension of $\wt F_X$ to $\wt Y$.  We note that in this part of the proof the smoothness assumptions were not needed.

Now suppose that $\wt F_X$ extends to $\wt Y$. The question whether $\wt F_X$ is compatible with $\wt Z$ is local on $\wt X$, so we can assume that $\wt X = \Spec \wt A$ and $\wtcI = (\tilde x_1, \ldots, \tilde x_c)$ for some $\tilde x_i\in A$ such that $(p, \tilde x_1, \ldots, \tilde x_c)$ is a regular sequence and $c>1$. Write $\wt F{}_X^* \tilde x_i = \tilde x_i^p + pf_i$ for $f_i\in A=\wt A\otimes k$. The condition that $\wt F{}_X^*$ extends to the open subset $\wt Y_i = \Spec \wt A[\wtcI/\tilde x_i] \subseteq \wt Y$ is equivalent to the condition 
\[
\wt F{}_X^*(\tilde x_j/\tilde x_i) = \frac{\tilde x_j^p + pf_j}{\tilde x_i^p + pf_i} = 
\frac{(\tilde x_j^p + pf_j)(\tilde x_i^p - pf_i)}{\tilde x_i^{2p}} \in \wt A[\wtcI/\tilde x_i]
\]
which amounts to saying that
\[ 
  x_i^p f_j - x_j^p f_i \in \cI^{2p} \quad \text{for all }j\neq i,
\]
where $x_i\in A$ are the images of $\tilde x_i$. These equations imply that
\[ 
  f_i \in (\cI^{2p} + (x_i^p):(x_j^p)) = \cI^p \quad \text{for } i \neq j. \qedhere
\]
\end{proof}

In the following proposition, we call a morphism of $k$-schemes $f \colon X \to Y$ \emph{separable} if it is a composition of a generically smooth morphism and a closed immersion. The proof uses some results proved in the subsequent two sections.

\begin{prop} \label{prop:compatible_with_frobenius_on_products_preliminary}
  Let $(\wt X,\wt F_X)$ and $(\wt Y,\wt F_Y)$ be Frobenius liftings of smooth and proper \mbox{$k$-schemes} $X$ and $Y$.  Let $V \subseteq X \times Y$ be an integral subscheme such that one of the projections $\pi_X$ or $\pi_Y$ is separable when restricted to $V$.  Suppose that the lifting of Frobenius $\wt F_X \times \wt F_Y$ on $\wt X \times \wt Y$ is compatible with a lifting of $V$. Then $V = V_X \times V_Y$ for some integral subschemes $V_X \subseteq X$ and $V_Y \subseteq Y$. 
\end{prop}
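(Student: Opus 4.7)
By symmetry I will assume that $\pi_X|_V$ is separable. Denote by $V_X \subseteq X$ and $V_Y \subseteq Y$ the (integral) scheme-theoretic images of $\pi_X|_V$ and $\pi_Y|_V$; then $V \subseteq V_X \times V_Y$, and the plan is to prove equality. By separability, there is a dense smooth open $V^\circ \subseteq V$ on which $V^\circ \to V_X$ is smooth and $V_X$ is smooth near the image. My strategy is to show that the conormal sheaf $\mathcal{N}^\vee := \mathcal{N}^\vee_{V/X \times Y}|_{V^\circ}$ coincides with $\pi_X^*\mathcal{N}^\vee_{V_X/X}|_{V^\circ} \oplus \pi_Y^*\mathcal{N}^\vee_{V_Y/Y}|_{V^\circ}$, which via ranks forces $\dim V = \dim V_X + \dim V_Y$ and then, by integrality, $V = V_X \times V_Y$.

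The first step is structural: a direct calculation with $\wt F_X \otimes \wt F_Y$ shows that under the canonical decomposition $\Omega^1_{X \times Y} = \pi_X^*\Omega^1_X \oplus \pi_Y^*\Omega^1_Y$, the morphism $\xi_{X \times Y}$ of \eqref{eqn:def-xi} attached to $\wt F_X \times \wt F_Y$ is the diagonal $\xi_X \oplus \xi_Y$; in particular, by Proposition~\ref{prop:frobenius_cotangent_morphism}(b), the associated $F$-splitting is the external product $\sigma_X \boxtimes \sigma_Y$. The second step uses compatibility: since $\wt F := \wt F_X \times \wt F_Y$ is compatible with $\wt V$ (Definition~\ref{def:flift-compatible}), it restricts to a lift of absolute Frobenius on $\wt V$, and in particular to a Frobenius lifting of $V^\circ$. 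By the naturality of the $\xi$ construction, the quotient $\Omega^1_{X \times Y}|_{V^\circ} \twoheadrightarrow \Omega^1_{V^\circ}$ intertwines $\xi_{X \times Y}|_{V^\circ}$ with the $\xi_{V^\circ}$ produced by Proposition~\ref{prop:frobenius_cotangent_morphism}, which forces $\xi_{X \times Y}$ to send $F^*\mathcal{N}^\vee$ into $\mathcal{N}^\vee$.

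The third and decisive step is the decomposition of $\mathcal{N}^\vee$. The inclusion $\pi_X^*\mathcal{N}^\vee_{V_X/X}|_{V^\circ} \oplus \pi_Y^*\mathcal{N}^\vee_{V_Y/Y}|_{V^\circ} \subseteq \mathcal{N}^\vee$ is automatic from $V \subseteq V_X \times V_Y$. The opposite inclusion---equivalently, the rank inequality $\dim V \geq \dim V_X + \dim V_Y$---is where I expect the main obstacle to lie: on the purely abstract level, a diagonal endomorphism of $A \oplus B$ need not preserve only ``diagonal'' subsheaves, so additional structure is essential. I plan to exploit the injectivity of $\xi_X, \xi_Y$ (Proposition~\ref{prop:frobenius_cotangent_morphism}(b)), the splitting $\xi^{\mathrm{ad}}$ of the Cartier sequence (Proposition~\ref{prop:frobenius_cotangent_morphism}(a)), and the geometric constraint coming from the smoothness of $V^\circ \to V_X$: in local coordinates adapted to a point of $V^\circ$ lying over a smooth point of $V_X$, a putative ``mixed'' conormal section $(\alpha, \beta) \in \mathcal{N}^\vee$ with $\alpha \notin \mathcal{N}^\vee_{V_X/X}$ should, under iteration by $\xi$ together with the rank bound on $\mathcal{N}^\vee$, yield a contradiction. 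A careful $p$-derivation computation on the ideal of $\wt V$ inside $\wt X \times \wt Y$ is likely needed to make this rigorous.

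Once Step~3 is in place, the rank equality gives $\dim V = \dim V_X + \dim V_Y$, and since $V$ is integral and contained in $V_X \times V_Y$, dimension and irreducibility together force $V = V_X \times V_Y$, completing the proof.
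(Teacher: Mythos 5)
Your Step~3 is precisely the missing piece, and it is a genuine gap, not a detail to be polished later. The reduction to the conormal calculation is fine as far as it goes: $\xi_{X\times Y} = \xi_X \oplus \xi_Y$, compatibility forces $\xi_{X\times Y}$ to carry $F^*\mathcal{N}^\vee$ into $\mathcal{N}^\vee$, and the inclusion $\pi_X^*\mathcal{N}^\vee_{V_X/X}\oplus \pi_Y^*\mathcal{N}^\vee_{V_Y/Y} \subseteq \mathcal{N}^\vee$ is automatic. But the crux—that a subbundle of $\pi_X^*\Omega^1_X \oplus \pi_Y^*\Omega^1_Y$ stable under a block-diagonal $\xi_X \oplus \xi_Y$ must itself be a direct sum of pieces from each factor—is exactly the statement you identify as the obstacle, and you offer only a heuristic (``iteration,'' ``a careful $p$-derivation computation is likely needed''). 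As you yourself note, a diagonal endomorphism of $A \oplus B$ does not in general preserve only diagonal subobjects, so something genuinely structural is required and is not supplied. Without it, the dimension count $\dim V \geq \dim V_X + \dim V_Y$ is unproven.

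The paper uses a completely different and more global argument that avoids this local obstruction. After assuming $\pi_Y|_V$ is separable and fixing the images $V_X$, $V_Y$, it applies Corollary~\ref{cor:froblift-speck}(c) to the projection $\pi_{\wt Y} \colon (\wt X \times \wt Y, \wt F_X\times\wt F_Y) \to (\wt Y, \wt F_Y)$: for each $y\in V_Y(k)$, the fiber $V_y = V\cap(X\times\{y\})$, viewed inside $X$, inherits a lifting compatible with $\wt F_X$. By separability, $V_y$ is a union of integral subschemes for general~$y$. Lemma~\ref{lem:comp_f-liftable_comp_f-split} shows each of these is compatibly $F$-split for the splitting attached to $\wt F_X$, and Schwede's finiteness theorem (via Corollary~\ref{cor:frobenius_subscheme_containment}) then says there are only finitely many integral subschemes compatible with $\wt F_X$. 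Since $V_Y$ is connected, this finiteness forces $V_y$ to be constant, so the closed immersion $V \hookrightarrow V_X \times V_Y$ is fiberwise an isomorphism over $V_Y$, hence an isomorphism. The heavy lifting is done not by a linear-algebra argument on conormal sheaves but by the finiteness of compatibly split subvarieties—a tool you did not invoke. If you want to complete your approach independently, you should expect to need an input of comparable strength; the block-diagonality of $\xi$ alone will not do it.
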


\begin{proof}
We set $V_X$ (resp.\ $V_Y$) to be the image of $V$ under the projection $\pi_X \colon X \times Y \to X$ (resp.\ $\pi_Y \colon X \times Y \to Y$), and assume without loss of generality that $\pi_Y$ is separable when restricted to $V$.  We claim that the closed immersion $V \hookrightarrow V_X \times V_Y$ is an isomorphism.  To see this, we apply Corollary~\ref{cor:froblift-speck}(c) to the projection 
\[
  \pi_{\wt Y} \colon (\wt X \times \wt Y,\wt F_X \times \wt F_Y) \ra (\wt Y,\wt F_Y)
\]
compatible with the respective Frobenius liftings, and observe that for every $y \in V_Y$ the Frobenius lifting $\wt F_X$ is compatible with a lifting of the subscheme $V_y = V \cap (X \times \{y\})$, when interpreted as a subscheme of $X$.  By the assumptions the projection $\pi_Y$ is separable and therefore for a general $y$ the subscheme $V_y$ is a union of integral subschemes.  Using Corollary~\ref{cor:frobenius_subscheme_containment} we now notice that there are only finitely many choices for $V_y$, and therefore they are all isomorphic since $V_Y$ is connected.  This implies that the immersion $V \hookrightarrow V_X \times V_Y$ is an isomorphism on the fibers of the projection to $V_Y$ and hence is an isomorphism.
\end{proof}


\section{\texorpdfstring{$F$-splittings associated to Frobenius liftings --- relative case}{F-splittings associated to Frobenius liftings --- relative case}}
\label{sec:f_liftings_and_splittings_relative}

\subsection{\texorpdfstring{Divisors associated to $F$-splittings}{Divisors associated to F-splittings}}

We now turn to a more detailed study of relative Frobenius splittings. The Frobenius trace map $\mathrm{Tr}_{X/S} \colon F_{X/S *} \omega_{X/S} \to \omega_{X'/S}$ plays a fundamental role in the theory of $F$-splittings.

\begin{prop} \label{prop:fsplit-and-tracefsplit} 
  Let $X \to S$ be a smooth morphism of $k$-schemes. Then Grothendieck duality interchanges splittings of $\mathrm{Tr}_{X/S}$ and relative $F$-splittings of $X/S$.
\end{prop}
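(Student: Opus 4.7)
The plan is to apply Grothendieck duality for the finite flat morphism $F_{X/S}\colon X\to X'$ (finite and flat since $X\to S$ is smooth) and then track what happens to the trace and the splitting condition under the resulting adjunction.

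First, recall that for the finite flat morphism $F_{X/S}$ with both source and target smooth of the same relative dimension over $S$, one has $F_{X/S}^{!}\omega_{X'/S}=\omega_{X/S}$, and Grothendieck duality yields a canonical isomorphism
\[
  F_{X/S,*}\omega_{X/S}\isomto \cHom_{\cO_{X'}}(F_{X/S,*}\cO_X,\omega_{X'/S}),
\]
under which the trace map $\mathrm{Tr}_{X/S}\colon F_{X/S,*}\omega_{X/S}\to \omega_{X'/S}$ corresponds to the evaluation-at-$1$ morphism $\cHom_{\cO_{X'}}(F_{X/S,*}\cO_X,\omega_{X'/S})\to \omega_{X'/S}$, $\varphi\mapsto \varphi(1)$, since $F_{X/S}^{*}\colon\cO_{X'}\to F_{X/S,*}\cO_X$ is the unit of the adjunction.

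Next, apply $\Hom_{\cO_{X'}}(\omega_{X'/S},-)$ to both sides and use that $\omega_{X'/S}$ is invertible to obtain a natural bijection
\[
  \Hom_{\cO_{X'}}(\omega_{X'/S},F_{X/S,*}\omega_{X/S})\isomto \Hom_{\cO_{X'}}(F_{X/S,*}\cO_X,\cO_{X'}),
  \qquad \tau\longleftrightarrow \sigma,
\]
characterized locally by $\tau(\alpha)(f)=\sigma(f)\cdot \alpha$ for local sections $\alpha$ of $\omega_{X'/S}$ and $f$ of $F_{X/S,*}\cO_X$ (where the left-hand side is read after applying the duality isomorphism above).

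It remains to match the section conditions. Under the correspondence just described, the composite $\mathrm{Tr}_{X/S}\circ \tau$ sends $\alpha$ to $\tau(\alpha)(1)=\sigma(1)\cdot\alpha$. Hence $\mathrm{Tr}_{X/S}\circ\tau=\mathrm{id}_{\omega_{X'/S}}$ if and only if $\sigma(1)=1$, i.e.\ $\sigma\circ F_{X/S}^{*}=\mathrm{id}_{\cO_{X'}}$, which is precisely the condition that $\sigma$ be a relative $F$-splitting of $X/S$. The main point to be careful about is the identification of the trace with evaluation at $1$; once this is in place, the rest is bookkeeping.
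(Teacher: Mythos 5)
Your proof is correct and takes essentially the same route the paper does: the paper's entire proof is the one-liner ``Clear from functoriality of Grothendieck duality for the finite flat morphism $F_{X/S}$,'' and your argument simply spells out that duality step — identifying $\mathrm{Tr}_{X/S}$ with evaluation at $1$ under $F_{X/S,*}\omega_{X/S}\isomto\cHom_{\cO_{X'}}(F_{X/S,*}\cO_X,\omega_{X'/S})$ and then untwisting by the invertible sheaf $\omega_{X'/S}$.
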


\begin{proof}
Clear from functoriality of Grothendieck duality for the finite flat morphism $F_{X/S}$.
\end{proof}

The following auxiliary base change result shows in particular that the fibers of a smooth relatively $F$-split morphism are $F$-split.

\begin{lemma}[cf.\ {\cite[Lemma 2.18]{PSZ}}] \label{lem:fsplit-basechange} 
  Consider a Cartesian diagram of $k$-schemes
  \[
    \xymatrix{
      W \ar[r]^{\psi} \ar[d]_{f'} & X \ar[d]^f \\
      Z \ar[r] & S,
    }
  \]
  with $f \colon X \to S$ smooth, and let $\delta_{X/S}$ be a splitting of $\mathrm{Tr}_{X/S}$. Then there exists a splitting $\delta_{W/Z}$ of $\mathrm{Tr}_{W/Z}$ induced by the following commutative diagram.
  \[
    \xymatrix{
      \psi^*  \omega_{X^{'}/S} \ar[d]_{\psi^*\delta_{X/S}}  \ar[r] ^{\sim} & \omega_{W^{'}/Z} \ar[d]^{\delta_{W/Z}} \\
      \psi^* (F_{X/S})_* \omega_{X/S}  \ar[r]^{\sim} & (F_{W/Z})_*\omega_{W/Z},
    }
  \]
 where $X'$ and $W'$ are the Frobenius twists of $X$ and $W$ relative to $S$ and $Z$, respectively.
\end{lemma}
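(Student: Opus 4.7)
The plan is to identify all the objects appearing in the diagram as base changes along the flat map $Z \to S$, and to verify that the trace map is compatible with this base change; the splitting $\delta_{W/Z}$ is then forced on us by the diagram, and the splitting property is inherited from $\delta_{X/S}$ by pulling back the identity.

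First, since $f$ is smooth, so is $f'$, and the formation of the relative Frobenius is compatible with base change in the base: the Frobenius twist $W'$ is the base change of $X'$ along $Z \to S$, and $F_{W/Z}$ is the base change of $F_{X/S}$ along $W' \to X'$. In particular one has a Cartesian square
\[
  \xymatrix{
    W \ar[r] \ar[d]_{F_{W/Z}} & X \ar[d]^{F_{X/S}} \\
    W' \ar[r] & X'.
  }
\]
Since $F_{X/S}$ is finite flat and $\omega_{X/S}$ commutes with base change along $Z\to S$ (relative dualizing sheaf for a smooth morphism is $\det\Omega^1_{X/S}$), flat base change gives the canonical iso $\psi^*(F_{X/S})_*\omega_{X/S} \isom (F_{W/Z})_*\omega_{W/Z}$, which is the bottom horizontal arrow in the diagram, while the top horizontal arrow is the analogous (and easier) iso $\psi^*\omega_{X'/S} \isom \omega_{W'/Z}$.

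Next, I would verify that Grothendieck duality for the finite flat morphism $F_{X/S}$ is compatible with the base change $Z\to S$. Concretely, under the above identifications the trace map $\mathrm{Tr}_{W/Z}$ coincides with $\psi^*\mathrm{Tr}_{X/S}$. This is standard: for a finite flat morphism $g$, the sheaf $\cHom(g_*\cO,\omega)$ together with its evaluation-trace commutes with flat base change in the target, and here we apply this to $F_{X/S}$. Taking this compatibility for granted, we simply define $\delta_{W/Z}$ by the commutativity of the displayed square, and compute
\[
  \mathrm{Tr}_{W/Z}\circ\delta_{W/Z} = \psi^*\bigl(\mathrm{Tr}_{X/S}\circ\delta_{X/S}\bigr) = \psi^*\mathrm{id}_{\omega_{X'/S}} = \mathrm{id}_{\omega_{W'/Z}},
\]
so $\delta_{W/Z}$ is indeed a splitting of $\mathrm{Tr}_{W/Z}$.

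The main obstacle is not the formal manipulation but the verification of the two base-change compatibilities used above: the pushforward isomorphism $\psi^*(F_{X/S})_*\omega_{X/S} \isom (F_{W/Z})_*\omega_{W/Z}$ and, more crucially, the fact that the Grothendieck trace for $F_{X/S}$ is compatible with base change to $W'\to X'$. Both follow from general properties of Grothendieck duality for finite flat morphisms; once they are in hand the rest of the argument is essentially diagram-chasing, as indicated.
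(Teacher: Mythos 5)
Your proof is correct and follows essentially the same route as the paper's: base-change compatibility of the relative Frobenius giving the Cartesian square, flat base change for $(F_{X/S})_*\omega_{X/S}$ (an isomorphism since this sheaf is a vector bundle), and compatibility of the Grothendieck trace with base change. You are a bit more explicit than the paper in isolating the trace-compatibility step, which the paper leaves implicit, but the argument is the same.
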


\begin{proof}
The arrow $\psi^* (F_{X/S})_* \omega_{X/S}  \to (F_{W/Z})_*\omega_{W/Z}$ in the above diagram comes from the cohomological base change for the diagram
\[
  \xymatrix{
    W \ar[r] \ar[d]_{F_{W/Z}} & X \ar[d]^{F_{X/S}} \\
    W^{'} \ar[r] & X^{'}.
  }
\]
To conclude the proof of the lemma it is enough to show that this arrow is an isomorphism, and this is clear because $F_{X/S *}\omega_{X/S}$ is a vector bundle ($f$ is smooth, so $F_{X/S}$ is finite and flat) and the above diagram is Cartesian.
\end{proof}

The following result shows that to every $F$-splitting we can associate a $\QQ$-divisor.

\begin{prop} [\cite{schwede09} and \cite{PSZ}] \label{prop:FSPlit-DivPair} 
Let $f \colon X \to S$ be a smooth morphism of \mbox{$k$-schemes}. Then to every splitting $\delta_{X/S}$ of ${\rm Tr}_{X/S}$  we can canonically associate an effective $\QQ$-divisor $\Delta_{\delta_{X/S}}$ on $X$ such that
\begin{enumerate}[(a)]
 \item $\Delta_{\delta_{X/S}} \sim_{\QQ} -K_{X/S}$,
 \item if $f$ has connected fibers, then $\Delta_{\delta_{X/S}}$ is horizontal, i.e., it does not contain any fiber.
\end{enumerate}
\end{prop}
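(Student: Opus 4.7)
The plan is to identify the splitting $\delta_{X/S}$ with a nonzero global section of $\omega_{X/S}^{1-p}$ via Grothendieck duality, and to set $\Delta_{\delta_{X/S}}$ to be $1/(p-1)$ times the divisor of zeros of that section.

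First, since $f$ is smooth, the relative Frobenius $F_{X/S}\colon X\to X'$ is finite and flat. Writing $W_{X/S}\colon X' = F_S^*X\to X$ for the projection, base change gives $\omega_{X'/S}\cong W_{X/S}^*\omega_{X/S}$, whence $F_{X/S}^*\omega_{X'/S}\cong F_X^*\omega_{X/S}\cong \omega_{X/S}^{\otimes p}$. Combining this with Grothendieck--Serre adjunction for $F_{X/S}$,
\[
  \Hom_{\cO_{X'}}\bigl(\omega_{X'/S},\,F_{X/S*}\omega_{X/S}\bigr)\;\cong\;\Hom_{\cO_X}\bigl(F_{X/S}^*\omega_{X'/S},\,\omega_{X/S}\bigr)\;\cong\;H^0\bigl(X,\omega_{X/S}^{1-p}\bigr).
\]
Under this identification the splitting $\delta_{X/S}$ corresponds to a canonical section $s_\delta\in H^0(X,\omega_{X/S}^{1-p})$, which is nonzero since $\mathrm{Tr}_{X/S}\circ\delta_{X/S}=\mathrm{id}$. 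Define
\[
  \Delta_{\delta_{X/S}}\;:=\;\frac{1}{p-1}\,\operatorname{div}(s_\delta),
\]
an effective $\QQ$-divisor on $X$ which depends only on $\delta_{X/S}$ because the normalization of $s_\delta$ is fixed by the splitting condition.

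Part (a) is then immediate: $(p-1)\Delta_{\delta_{X/S}}=\operatorname{div}(s_\delta)\sim -(p-1)K_{X/S}$, so $\Delta_{\delta_{X/S}}\sim_{\QQ} -K_{X/S}$. For part (b), assume the fibers of $f$ are connected and fix a point $s\in S$. By Lemma~\ref{lem:fsplit-basechange} applied to $Z=\Spec\kappa(s)\to S$, the splitting $\delta_{X/S}$ restricts to a splitting $\delta_{X_s}$ of $\mathrm{Tr}_{X_s}$. Because the relative dualizing sheaf, the projection $W_{X/S}$, and the adjunction used above all commute with base change on $S$ for the finite flat morphism $F_{X/S}$, the section of $\omega_{X_s}^{1-p}$ attached to $\delta_{X_s}$ is precisely the restriction $s_\delta|_{X_s}$. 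Since $\delta_{X_s}$ is a splitting and hence nonzero, $s_\delta|_{X_s}\ne 0$; as $X_s$ is integral (smooth and connected), it cannot lie in the support of $\operatorname{div}(s_\delta)=(p-1)\Delta_{\delta_{X/S}}$, proving horizontality.

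The only delicate point I expect is the base-change compatibility invoked in the proof of (b): one has to spell out that the chain of isomorphisms identifying splittings of $\mathrm{Tr}_{X/S}$ with global sections of $\omega_{X/S}^{1-p}$ is functorial under pullback along $Z\to S$, so that the restriction $s_\delta|_{X_s}$ genuinely matches the section produced by Lemma~\ref{lem:fsplit-basechange}. This is standard for finite flat morphisms, but should be invoked carefully. Everything else --- the divisor construction, effectivity, and the $\QQ$-linear equivalence in (a) --- is formal.
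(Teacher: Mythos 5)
Your proposal is correct and follows essentially the same route as the paper: define $\Delta_{\delta_{X/S}}$ as $\tfrac{1}{p-1}$ times the divisor of the section of $\omega_{X/S}^{1-p}$ obtained from $\delta_{X/S}$ via adjunction for $F_{X/S}$, note the resulting $\QQ$-linear equivalence, and deduce horizontality from Lemma~\ref{lem:fsplit-basechange} by restricting to a fiber. (A small terminological nit: the isomorphism $\Hom_{\cO_{X'}}(\omega_{X'/S}, F_{X/S*}\omega_{X/S}) \cong \Hom_{\cO_X}(F_{X/S}^*\omega_{X'/S}, \omega_{X/S})$ is ordinary pullback--pushforward adjunction, not Grothendieck--Serre duality, which is instead what the paper uses in Proposition~\ref{prop:fsplit-and-tracefsplit} to pass between $F$-splittings and splittings of $\mathrm{Tr}_{X/S}$.)
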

\begin{proof}
By adjunction, $\delta_{X/S}$ induces a morphism $F_{X/S}^*  \omega_{X^{'}/S} \to \omega_{X/S}$ which is equivalent to  $\cO_X \to \smash{\omega_{X/S}^{1-p}}$, and so we get a divisor 
\[
D_{\delta_{X/S}} \sim (1-p)K_{X/S}.
\]
Set $\Delta_{\delta_{X/S}} = \frac{1}{p-1}D_{\delta_{X/S}}$. The restriction of this $\QQ$-divisor to a fiber of $f$ is non-zero as it  corresponds to a splitting of the Frobenius trace map on this fiber (see Lemma~\ref{lem:fsplit-basechange}).
\end{proof}

For a smooth $X/k$ and a splitting $\delta_X$ of ${\rm Tr}_{X/k}$, we denote the corresponding \mbox{$\QQ$-divisor} by $\Delta_{\delta_{X}}$.

\begin{remark} \label{rem:fsplitdivisor-cartesian} In the setting of Lemma~\ref{lem:fsplit-basechange}, we have $\psi^* \Delta_{\delta_{X/S}} = \Delta_{\delta_{W/Z}}$ (see the commutative diagram in the statement of this lemma).
\end{remark} 

\begin{remark} \label{rem:coefficients-fsplit}
It is easy to see that the coefficients of $\Delta_{\delta_{X/S}}$ are at most one. When $S=\Spec k$ (which is the only case in which we will apply this observation), this follows from \cite[Theorem 3.3]{hw02} (cf.\ \cite[Theorem 4.4]{schwedesmith10}). 
\end{remark}  

\begin{remark} 
It is not necessary to assume that $f \colon X \to S$ is smooth in order to be able to associate a $\QQ$-divisor to a relative $F$-splitting. A far more general setting is described in \cite{PSZ}. When $S = \Spec k$, then a $\QQ$-divisor can be associated to a Frobenius splitting on every normal variety $X$ by an extension from the smooth locus.
\end{remark}

\subsection{Divisors associated to Frobenius liftings}
\label{ss:flift-fsplit}

To every Frobenius lifting $(\wt{X}, \wt{F}_X)$ of a smooth (or just normal) $k$-scheme $X$ we can associate a corresponding $F$-splitting $\smash{\sigma_{\wt{F}_X}}$ on $X$ and to every smooth morphism $\tilde{f} \colon \wt{Y} \to \wt{X}$ commuting with the liftings of Frobenius on $\wt Y$ and $\wt X$ we can associate a relative $F$-splitting $\smash{\sigma_{\wt{F}_{Y/X}}}$ of $Y/X$ (see Proposition~\ref{prop:frobenius_cotangent_morphism}). Here $\smash{\wt{F}_{Y/X}}$ denotes the induced lifting of the relative Frobenius.

This provides us with $\QQ$-divisors $\smash{\Delta_{\wt F_Y}}$, $\smash{\Delta_{\wt{F}_X}}$, and $\smash{\Delta_{\wt{F}_{Y/X}}}$ as in Proposition~\ref{prop:FSPlit-DivPair}. Let $f \colon Y \to X$ be the reduction of $\tilde{f}$ modulo $p$.

\begin{lemma} \label{lem:compatibility-divisors} 
  Let $X$ and $Y$ be smooth $k$-schemes. Then $\Delta_{\wt{F}_Y} = \Delta_{\wt{F}_{Y/X}} + f^*\Delta_{\wt{F}_X}$, and if $f$~has connected fibers, then $\Delta_{\wt{F}_{Y/X}}$ (resp.\ $f^*\Delta_{\wt{F}_X}$) is horizontal (resp.\ vertical).
\end{lemma}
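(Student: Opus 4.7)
The plan is to realize each of the three $\QQ$-divisors as $\tfrac{1}{p-1}$ times the divisor of the determinant of the corresponding map $\xi$ from Proposition~\ref{prop:frobenius_cotangent_morphism}, and then to extract the desired equality from a multiplicative identity between these determinants coming from the relative cotangent sequence.

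First, I would set up the following ladder of short exact sequences. Pulling back the relative cotangent sequence
\[
0 \to f^*\Omega^1_X \to \Omega^1_Y \to \Omega^1_{Y/X} \to 0
\]
along $F_Y$ and applying the identifications $F_Y^* f^* \isom f^* F_X^*$ (from $f\circ F_Y = F_X\circ f$) and $F_Y^* \Omega^1_{Y/X} \isom F_{Y/X}^* \Omega^1_{Y'/X}$ (using the factorization $F_Y = \pi_Y \circ F_{Y/X}$ through the Frobenius twist $Y' = F_X^* Y$ together with the canonical base-change isomorphism $\pi_Y^* \Omega^1_{Y/X} \isom \Omega^1_{Y'/X}$), the three maps $\xi_X$, $\xi_Y$, $\xi_{Y/X}$ fit into a diagram whose left vertical is $f^*\xi_X$, middle vertical is $\xi_Y$, and right vertical is $\xi_{Y/X}$. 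Commutativity of the left square follows directly from the Frobenius-compatibility $\wt f \circ \wt F_Y = \wt F_X \circ \wt f$ after dividing by $p$, and commutativity of the right square follows from the induced lifted factorization $\wt F_Y = \wt\pi_Y \circ \wt F_{Y/X}$ over $\wt X$.

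Next, taking determinants along the top and bottom rows of the ladder (each a short exact sequence of vector bundles), I would obtain the multiplicative identity
\[
\det \xi_Y = f^*(\det \xi_X) \otimes \det \xi_{Y/X}
\]
viewed as sections of $\omega_Y^{1-p} = f^*\omega_X^{1-p} \otimes \omega_{Y/X}^{1-p}$. By the construction in Proposition~\ref{prop:FSPlit-DivPair}, the divisor of $\det \xi_Y$ is precisely $(p-1)\Delta_{\wt F_Y}$, and likewise for the two factors on the right. Dividing by $p-1$ yields the claimed equality $\Delta_{\wt F_Y} = f^*\Delta_{\wt F_X} + \Delta_{\wt F_{Y/X}}$. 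For the second assertion, the horizontality of $\Delta_{\wt F_{Y/X}}$ when $f$ has connected fibers is exactly Proposition~\ref{prop:FSPlit-DivPair}(b) applied to $\wt f$, while $f^*\Delta_{\wt F_X}$ is tautologically vertical as the pullback of a divisor from $X$.

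I expect the main technical obstacle to be the first step, specifically the identification $F_Y^* \Omega^1_{Y/X} \isom F_{Y/X}^* \Omega^1_{Y'/X}$ and the commutativity of the right square of the ladder. These hinge on the interplay between the absolute Frobenius lifting $\wt F_Y$ and the induced relative lifting $\wt F_{Y/X}$ mediated by the lifted Frobenius twist $\wt Y' = \wt F_X^* \wt Y$; once this diagram is carefully in place, the remaining steps are essentially formal manipulations with determinants and divisors.
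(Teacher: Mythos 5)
Your proposal is correct and matches the paper's argument essentially step for step: you construct the same commutative ladder between the pullback by $F_Y$ of the relative cotangent sequence and the relative cotangent sequence itself, with $f^*\xi_X$, $\xi_Y$, and $\xi_{Y/X}$ as the verticals (using the same identifications $F_Y^* f^* \isom f^* F_X^*$ and $F_{Y/X}^* \Omega^1_{Y'/X} \isom F_Y^* \Omega^1_{Y/X}$), take determinants, divide by $p-1$, and invoke Proposition~\ref{prop:FSPlit-DivPair} for horizontality. The step you flag as a potential obstacle -- the factorization $F_Y = \pi_Y \circ F_{Y/X}$ giving the right-hand square -- is exactly what the paper condenses into the phrase ``by the construction of the map $\xi$,'' so there is no genuine gap there.
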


\begin{proof}
Let $Y'$ be the base change of $Y$ along $F_X$. By the construction of the map $\xi$, we get the following commutative diagram
\[
  \xymatrix{
    0 \ar[r] & f^* F_X^* \Omega^1_X  \ar[d]^{f^*\xi_X} \ar[r]  & F_Y^* \Omega^1_Y \ar[d]^{\xi_Y} \ar[r] &  F_{Y/X}^* \Omega^1_{Y'/X} \ar[d]^{\xi_{Y/X}} \ar[r] &  0 \\    
    0 \ar[r] & f^*\Omega^1_X \ar[r]  & \Omega^1_Y \ar[r]  & \Omega^1_{Y/X} \ar[r] & 0,
  }
\]
where $F_{Y/X}^* \Omega^1_{Y'/X} \isom F_Y^* \Omega^1_{Y/X}$ and $f^*F_X^*\Omega^1_X \isom F_Y^* f^*\Omega^1_X$.  The first part of the lemma follows since the considered $\QQ$-divisors multiplied by $p-1$ are equal to ${\rm div}\, (\det \xi_Y)$, ${\rm div}\, (\det \xi_{Y/X})$, and ${\rm div}\, (\det \xi_{X})$, respectively. If $f$ has connected fibers, then $\Delta_{\wt{F}_{Y/X}}$ is horizontal by Proposition~\ref{prop:FSPlit-DivPair}. 
\end{proof}

The following corollary lists all the properties of $\Delta_{\wt{F}_X}$ we need in this article. Given a flat morphism $f \colon Y \to X$ of normal varieties such that $f_* \cO_Y = \cO_X$ and a $\QQ$-divisor $D$ on $Y$, we denote by $D^h$ and $D^v$ the horizontal and the vertical part, respectively.  

\begin{cor} \label{cor:all-properties-of-divisor} 
  Let $(\wt{Y}, \wt{F}_Y)$ be a Frobenius lifting of a smooth $k$-scheme $Y$.
  \begin{enumerate}[(a)]
    \item If $D \subseteq Y$ is a smooth irreducible divisor such that $H^0(D,\cO_D(mD))=0$ for ${1 \leq m \leq p}$, then $D \leq \Delta_{\wt{F}_Y}$.
    \item In the situation of Theorem~\ref{thm:descending-frob-lift}(b.ii), we have $\Delta_{\wt{F}_X} = \pi_*\Delta_{\wt{F}_Y}$.
    \item In the situation of Theorem~\ref{thm:descending-frob-lift}(b.i), assume that $\pi \colon Y \to X$ is smooth and let $\wt F_{Y/X}$ be the induced lifting of the relative Frobenius.  Then $\Delta_{\wt{F}_Y}^h$ is the \mbox{$\QQ$-divisor} associated to the relative $F$-splitting $\sigma_{\wt F_{Y/X}}$. In particular,
    \begin{align*}
      \Delta_{\wt{F}_Y}^h &\sim_{\QQ} -K_{Y/X}, \text{and }\\
      \Delta_{\wt{F}_Y}^v &= \pi^*\Delta_{\wt{F}_X} \sim_{\QQ} -\pi^*K_{X}.
    \end{align*}
  \end{enumerate}
\end{cor}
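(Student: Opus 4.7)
For (a), the vanishing hypothesis $H^0(D,\cO_D(mD))=0$ for $1\leq m\leq p$ is precisely the input to Lemma~\ref{lem:negative_divisors}, which produces a lifting $\wt D\subseteq\wt Y$ of $D$ with which $\wt F_Y$ is compatible; thus $(\wt Y,\wt D,\wt F_Y)$ is a Frobenius lifting of the nc pair $(Y,D)$. The commutative square in the proof of Lemma~\ref{lem:fsplit-comp-with-d} then shows that $\det\xi_Y\colon F_Y^*\omega_Y\to\omega_Y$ vanishes to order at least $p-1$ along $D$. Viewing $\det\xi_Y$ as a global section of $\omega_Y^{1-p}$ via Grothendieck duality, its zero divisor is exactly $D_{\delta_Y}=(p-1)\Delta_{\wt F_Y}$, so the vanishing translates into $\Delta_{\wt F_Y}\geq D$. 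This is the only mildly delicate step in the proof, and it is where the precise order of vanishing (the full $p-1$, not merely positive) enters in an essential way.

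For (b), let $U\subseteq X$ be the maximal open on which $\pi$ is an isomorphism and set $V=\pi^{-1}(U)$. Since $\pi$ is birational between smooth varieties, $X\setminus U$ has codimension at least $2$ in $X$. The compatibility of the Frobenius liftings under $\swt\pi$ forces $\Delta_{\wt F_Y}|_V$ to be carried onto $\Delta_{\wt F_X}|_U$ by the isomorphism $\pi|_V$. As $\pi_*$ annihilates the $\pi$-exceptional components of $\Delta_{\wt F_Y}$ and sends each non-exceptional prime divisor birationally onto its image in $X$, we obtain $(\pi_*\Delta_{\wt F_Y})|_U=\Delta_{\wt F_X}|_U$. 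Both sides are $\QQ$-Weil divisors on the smooth variety $X$, so agreement in codimension one forces global equality.

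For (c), Lemma~\ref{lem:compatibility-divisors} gives $\Delta_{\wt F_Y}=\Delta_{\wt F_{Y/X}}+\pi^*\Delta_{\wt F_X}$. By Proposition~\ref{prop:FSPlit-DivPair}(b), applied to the smooth morphism $\pi$ (which has connected fibers thanks to $\cO_X\isomto \pi_*\cO_Y$), the summand $\Delta_{\wt F_{Y/X}}$ is horizontal, while $\pi^*\Delta_{\wt F_X}$ is manifestly vertical. This must therefore coincide with the horizontal-vertical decomposition of $\Delta_{\wt F_Y}$, identifying $\Delta_{\wt F_Y}^h$ with the $\QQ$-divisor associated to the relative $F$-splitting $\sigma_{\wt F_{Y/X}}$. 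The linear equivalences $\Delta_{\wt F_Y}^h\sim_{\QQ}-K_{Y/X}$ and $\pi^*\Delta_{\wt F_X}\sim_{\QQ}-\pi^*K_X$ then follow from Proposition~\ref{prop:FSPlit-DivPair}(a).
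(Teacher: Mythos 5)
Your proposal is correct and follows essentially the same route as the paper: part (a) via Lemma~\ref{lem:negative_divisors} and the factorization of $\det\xi$ through its logarithmic counterpart (which is exactly the commutative square in the proof of Lemma~\ref{lem:fsplit-comp-with-d}), part (b) via the observation that the two divisors agree outside a set of codimension at least two, and part (c) directly from Lemma~\ref{lem:compatibility-divisors} together with Proposition~\ref{prop:FSPlit-DivPair}. You supply somewhat more detail than the paper's terse one-line justifications, but the underlying arguments coincide.
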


\begin{proof}
Statement (a) follows from Lemma~\ref{lem:negative_divisors}. Indeed, we have the morphism \eqref{eqn:def-log-xi}
\[
  \xi_{(X,D)} \colon F_{X/k}^*\Omega^1_{X^{'}}(\log D') \ra \Omega^1_X(\log D) 
\]
such that ${\rm div}(\det \xi_X) = {\rm div}(\det \xi_{(X,D)}) + (p-1)D$.

Statement (b) is clear by the construction of $\Delta_{\wt{F}_Y}$ since $g({\rm Exc}\, g)$ has codimension at least two. Statement (c) follows from Lemma~\ref{lem:compatibility-divisors}. 
\end{proof} 

We now relate certain conditions on the compatibility of subschemes for Frobenius liftings and Frobenius splittings.

\begin{lemma} \label{lem:comp_f-liftable_comp_f-split}
  Let $(\wt X,\wt F_X)$ be a Frobenius lifting of a smooth $k$-scheme $X$.  Suppose that $\wt F_X$ is compatible with a lifting of an integral subscheme $Z \subset X$.  Then the associated Frobenius splitting is compatible with $Z$
\end{lemma}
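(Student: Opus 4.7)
The plan is to reduce the claim to the divisor case handled in Lemma~\ref{lem:fsplit-comp-with-d}, by passing to the blow-up $\wt Y = \Bl_{\wt Z}\wt X$, on which the exceptional divisor $\wt E$ plays the role of a compatible divisor. Two preliminary reductions are convenient. First, since $k$ is perfect and $Z$ is integral, the smooth locus $Z^{\rm sm}\subseteq Z$ is open and dense; setting $U := X\setminus (Z\setminus Z^{\rm sm})$, the compatibility of $\wt F_X$ with $\wt Z$ restricts to the open $\wt U\subseteq \wt X$. If the compatibility of the associated splitting is established for $(\wt X|_{\wt U}, \wt F_X|_{\wt U})$ and $Z\cap U = Z^{\rm sm}$, then for any local section $f\in\cI_Z$ the image of $\sigma_X(f)$ in $\cO_Z$ vanishes on the dense open $Z\cap U$ of the reduced subscheme $Z$, and hence vanishes on all of $Z$, giving $\sigma_X(f)\in\cI_Z$. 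So we may assume $Z$ is smooth. Second, if $Z$ has codimension $1$, it is a smooth prime divisor and (as noted after Definition~\ref{def:flift-compatible}) the two compatibility notions coincide, so the conclusion is exactly Lemma~\ref{lem:fsplit-comp-with-d}. We may therefore also assume that $Z$ has codimension at least $2$.

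Let $\swt\pi\colon\wt Y = \Bl_{\wt Z}\wt X\to\wt X$ be the blow-up with exceptional divisor $\wt E\subseteq\wt Y$, and write $\pi$, $E$ for the mod $p$ reductions. By Proposition~\ref{prop:blow-ups}, the compatibility of $\wt F_X$ with $\wt Z$ yields an extension of $\wt F_X$ to a Frobenius lifting $\wt F_Y$ on $\wt Y$, and by Lemma~\ref{lem:negative_divisors} this $\wt F_Y$ is automatically compatible with $\wt E$. Then Lemma~\ref{lem:fsplit-comp-with-d} supplies a Frobenius splitting $\sigma_Y$ on $Y$ such that $\sigma_Y(F_{Y*}\cI_E)\subseteq\cI_E$.

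It remains to descend compatibility along $\pi$. Using the identifications $\pi_*\cO_Y = \cO_X$ (since $X$ is smooth and $Z$ has codimension $\geq 2$) and $\pi_*F_{Y*}\cO_Y = F_{X*}\cO_X$ (by affineness of $F_X$), the pushforward $\pi_*\sigma_Y$ becomes an $\cO_X$-linear map $F_{X*}\cO_X\to\cO_X$, and I claim $\pi_*\sigma_Y = \sigma_X$. Indeed, $\pi$ is an isomorphism over the open $X\setminus Z$, under which $\wt F_X$ and $\wt F_Y$ are identified, so the associated splittings $\sigma_X$ and $\sigma_Y$ agree there, whence $\pi_*\sigma_Y|_{X\setminus Z} = \sigma_X|_{X\setminus Z}$; since $X\setminus Z$ is dense with complement of codimension $\geq 2$ and $\cHom_{\cO_X}(F_{X*}\cO_X,\cO_X)$ is locally free on the smooth $X$, this local agreement propagates to global equality. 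Finally, for $f\in\cI_Z$ one has $\pi^*f\in\cI_E$ (as $\pi^{-1}\cI_Z\cdot\cO_Y = \cI_E$), so $\sigma_Y(\pi^*f)\in\cI_E$, and pushing forward with $\pi_*\cI_E = \cI_Z$ yields $\sigma_X(f)\in\cI_Z$. The main subtle step is the identification $\pi_*\sigma_Y = \sigma_X$, cleanly handled by the torsion-freeness of $\cHom_{\cO_X}(F_{X*}\cO_X,\cO_X)$.
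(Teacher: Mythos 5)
Your proof is correct and follows essentially the same route as the paper's: reduce to $Z$ smooth via a density/generic-point argument, pass to the blow-up $\Bl_{\wt Z}\wt X$ using Proposition~\ref{prop:blow-ups} and Lemma~\ref{lem:negative_divisors} to obtain a Frobenius lifting compatible with the exceptional divisor, apply Lemma~\ref{lem:fsplit-comp-with-d} to get a compatibly split $E$, and push the splitting forward under $\pi$. The only differences are cosmetic: you split off the codimension-one case explicitly (which is in fact needed, since the hypotheses of Proposition~\ref{prop:blow-ups} require codimension $>1$), and you spell out the identification $\pi_*\sigma_Y=\sigma_X$ via a torsion-freeness argument over $X\setminus Z$, where the paper instead invokes \cite[Lemma~1.1.8(ii)]{BrionKumar} directly.
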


\begin{proof}
Let $\sigma_X \colon F_{X*}\cO_X \to \cO_X$ be the Frobenius splitting associated with $\wt F_X$.  First, we consider the case when $Z$ is smooth.  By Proposition~\ref{prop:blow-ups} we see that $Y = {\rm Bl}_Z X$ admits a~Frobenius lifting $(\wt Y,\wt F_Y)$ compatible with the unique lifting of the exceptional divisor $E$ and equipped with a lifting $\swt \pi \colon (\wt Y,\wt F_Y) \to (\wt X,\wt F_X)$ of the contraction morphism $\pi \colon Y \to X$.  By Lemma~\ref{lem:fsplit-comp-with-d} we see that $E$ is compatible with the Frobenius splitting $\sigma_Y \colon F_{Y*}\cO_Y \to \cO_Y$ of $Y$ induced by $\wt F_Y$.  By \cite[Lemma 1.1.8(ii)]{BrionKumar} we therefore see that $Z = \pi(E)$ is compatible with the push-forward of $\sigma_Y$ under $\pi$, which is equal to $\sigma_X$.  This finishes the proof for $Z$ smooth.  For an arbitrary integral $Z$, we just observe that the condition of being compatibly split can be checked at the generic point.
\end{proof}

\begin{cor} \label{cor:frobenius_subscheme_containment}
  Let $(\wt X,\wt F_X)$ be a Frobenius lifting of a finite type smooth $k$-scheme $X$.  Then there are only finitely integral subschemes $Z$ such that $\wt F_X$ is compatible with a~lifting of $Z$.  
\end{cor}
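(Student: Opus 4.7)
The plan is to reduce the assertion to the classical finiteness of compatibly Frobenius-split subschemes. First I would apply Lemma~\ref{lem:comp_f-liftable_comp_f-split}: any integral $Z \subseteq X$ admitting a lifting $\wt Z \subseteq \wt X$ compatible with $\wt F_X$ is automatically compatibly split by the Frobenius splitting $\sigma_X$ on $X$ associated to $(\wt X,\wt F_X)$ via Proposition~\ref{prop:frobenius_cotangent_morphism}. (Here I use that $X$ is smooth, so that $\sigma_X$ exists and the hypotheses of Lemma~\ref{lem:comp_f-liftable_comp_f-split} are met.) This reduces the corollary to a purely $F$-splitting statement: for a fixed Frobenius splitting $\sigma_X$ on a finite type smooth $k$-scheme, there are only finitely many compatibly split integral subschemes.

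The latter is a theorem of Kumar--Mehta, originally established for projective $F$-split varieties and subsequently extended to $F$-finite Noetherian schemes by Schwede (see \cite{BrionKumar} for the projective case and the references therein). The proof proceeds by showing that the collection of ideal sheaves $\cI \subseteq \cO_X$ satisfying $\sigma_X(F_{X*}\cI) \subseteq \cI$ is stable under sums, intersections, and primary decomposition, and then that it contains only finitely many prime members, which can be extracted by a Noetherian/combinatorial argument from the single section of $\omega_X^{1-p}$ corresponding to $\sigma_X$. Since a subscheme of $X$ is compatibly split exactly when its ideal lies in this collection, this gives the desired finiteness.

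The main obstacle is really the finiteness step for Frobenius splittings itself, which I would not reprove but invoke as a black box. The content of the corollary proper is then just the reduction provided by Lemma~\ref{lem:comp_f-liftable_comp_f-split}; once this is combined with the cited result for $F$-splittings, the statement follows immediately.
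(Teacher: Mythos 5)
Your proof is correct and follows exactly the paper's route: reduce via Lemma~\ref{lem:comp_f-liftable_comp_f-split} to the compatibly-split case, then invoke the known finiteness of compatibly Frobenius-split subschemes. The paper cites \cite[Theorem~5.8]{schwede09} for that last step, which is the same result (in Schwede's generality) that you appeal to.
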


\begin{proof}
By Lemma~\ref{lem:comp_f-liftable_comp_f-split} we observe that every subscheme compatible with $\wt F_X$ is compatible with the associated $F$-splitting.  Then we conclude by \cite[Theorem 5.8]{schwede09}, which states that there are only finitely many subschemes compatible with a given Frobenius splitting.
\end{proof}


\section{Base change of a lifting of Frobenius}
\label{s:frob-base-change}

In this subsection, we show that a morphism from a $W_2(k)$-liftable scheme to an $F$-liftable scheme lifts to $W_2(k)$ after composing with the Frobenius. Moreover, if the source is endowed with a lifting of Frobenius, this lifting commutes with the lifting of Frobenius. Although we do not use it much in the sequel, we regard the result as essential for a good understanding of Frobenius liftings.

\begin{prop} \label{prop:frob-lift}
  Let $(\wt Y, \wt F_Y)$ be a Frobenius lifting of a $k$-scheme $Y$. Let $\phi \colon Z\to Y$ be a~morphism of $k$-schemes and let $\wt Z$ be a lifting of $Z$ over $W_2(k)$. Then there exists a~morphism $\psi \colon \wt Z\to \wt Y$ (canonically defined by \eqref{eqn:def-psi} below) such that $\psi|_Z = F_Y\circ \phi$: 
  \[
    \xymatrix{
      \wt Z \ar@/^1.5em/@{-->}[rr]^{\psi} & \wt Y \ar[r]_{\wt F_Y} & \wt Y \\
      Z \ar[u] \ar[r]_{\phi} & Y \ar[u] \ar[r]_{F_Y} & Y. \ar[u]
    }
  \]  
  If $\wt F_Z$ is a lifting of $F_Z$ to $\wt Z$, then $\psi\circ \wt F_Z = \wt F_Y\circ \psi$. Further, if $\swt\phi\colon\wt Z\to \wt Y$ is any lifting of $\phi$, then $\psi = \wt F_Y\circ \swt\phi$.
\end{prop}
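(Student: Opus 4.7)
The plan is to construct $\psi$ locally from the Frobenius lift $\wt F_Y$ and then verify all three assertions by direct computation. Working affinely, write $Y = \Spec B$, $\wt Y = \Spec \wt B$, $Z = \Spec A$, $\wt Z = \Spec \wt A$, with $\phi^{*}\colon B\to A$ and $\wt F_Y^{*}\colon\wt B\to \wt B$. The key local gadget is the map $\delta\colon\wt B\to B$ defined by $\delta(\wt g) = \tfrac{1}{p}\bigl(\wt F_Y^{*}(\wt g) - \wt g^{p}\bigr)\bmod p$, which is well-defined because $\wt F_Y^{*}$ is a Frobenius lift. With this in hand, I would set
\[
\psi^{*}(\wt g) \;=\; \wt h^{p} + p\cdot\widetilde{\phi^{*}\delta(\wt g)},
\]
where $\wt h \in \wt A$ is any lift of $\phi^{*}(\bar g)$ (with $\bar g = \wt g \bmod p$) and $\widetilde{\phi^{*}\delta(\wt g)} \in \wt A$ is any lift of $\phi^{*}\delta(\wt g) \in A$.

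The first step is to check that $\psi^{*}$ is independent of the chosen lifts: the lift $\widetilde{\phi^{*}\delta(\wt g)}$ enters only through multiplication by $p$, and replacing $\wt h$ by $\wt h + pc$ changes $\wt h^{p}$ by a multiple of $p^{2}$. The next step, that $\psi^{*}$ is a ring homomorphism, reduces to the ``Witt-style'' identities
\[
\delta(\wt g_1 \wt g_2) = \bar g_1^{p}\,\delta(\wt g_2) + \bar g_2^{p}\,\delta(\wt g_1),\qquad
\delta(\wt g_1 + \wt g_2) = \delta(\wt g_1) + \delta(\wt g_2) - c_p(\bar g_1, \bar g_2),
\]
with $c_p(x,y) = \tfrac{1}{p}\sum_{k=1}^{p-1}\binom{p}{k}x^{k}y^{p-k}$, combined with the parallel expansion $(\wt h_1 + \wt h_2)^{p} \equiv \wt h_1^{p} + \wt h_2^{p} + p\,c_p(\bar h_1, \bar h_2)\pmod{p^{2}}$ in $\wt A$. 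Reducing modulo $p$ then yields $\psi^{*}(\wt g)\equiv(\phi^{*}\bar g)^{p}=(F_Y\circ\phi)^{*}(\bar g)$, so $\psi$ does lift $F_Y\circ\phi$. Since the formula involves only intrinsic data, the local constructions patch to a global morphism $\psi\colon\wt Z\to\wt Y$.

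For the commutation with a Frobenius lift $\wt F_Z$ on $\wt Z$, I would evaluate both sides on $\wt g\in\wt B$: the expression $(\wt F_Y\circ\psi)^{*}(\wt g) = \psi^{*}\bigl(\wt g^{p}+p\,\wt\delta\bigr)$ expands via the defining formula (after choosing a lift $\wt\delta\in\wt B$ of $\delta(\wt g)$), while $(\psi\circ\wt F_Z)^{*}(\wt g) = \wt F_Z^{*}\bigl(\wt h^{p}+p\,\phi^{*}\delta(\wt g)\bigr)$ simplifies using $\wt F_Z^{*}(\wt h)\equiv \wt h^{p}\pmod p$, and another application of the Witt-style identities shows the two agree modulo $p^{2}$. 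For the last assertion, if $\swt\phi\colon \wt Z\to\wt Y$ is any lift of $\phi$, then substitution gives $(\wt F_Y\circ\swt\phi)^{*}(\wt g) = \swt\phi^{*}(\wt g)^{p} + p\,\phi^{*}\delta(\wt g)$, and since $\swt\phi^{*}(\wt g)$ is itself a lift of $\phi^{*}\bar g$, this matches $\psi^{*}(\wt g)$ verbatim.

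The principal obstacle is the ring homomorphism check, where the Leibniz and additivity identities for $\delta$ must mesh correctly with the mixed-characteristic binomial expansion and with the multiplicativity of the ``Teichm\"uller-like'' term $\wt h^{p}$. One can organise the entire argument conceptually by realising $\psi$ as the composite $\wt Z\to W_2(Z)\to W_2(Y)\to \wt Y$, where the last arrow is the $\nu_{\wt Y,\wt F_Y}$ constructed in the proof of Theorem~\ref{thm:fsplit-liftings}, the middle one is $W_2(\phi)$, and the first comes from the ring homomorphism $W_2(\cO_Z)\to\cO_{\wt Z}$, $(a,b)\mapsto \wt a^{p}+p\wt b$, whose well-definedness and functoriality package precisely the Witt-style identities above. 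This viewpoint makes the remaining assertions transparent and fits naturally with the use of Witt vectors of general rings announced at the start of \S\ref{s:frob-base-change}.
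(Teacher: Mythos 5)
Your proposal is correct and ultimately coincides with the paper's argument: the explicit local formula $\psi^{*}(\wt g) = \wt h^{p} + p\,\widetilde{\phi^{*}\delta(\wt g)}$ is exactly the paper's, and your closing paragraph identifying $\psi$ as the composite $\theta_{\wt Z}$, then $W_2(\phi)$, then $\nu_{\wt Y,\wt F_Y}$ is precisely how the paper defines it and how it streamlines the verifications of the two final assertions via functoriality of $\theta$ and of the Witt vector Frobenius. The direct unpacking you carry out first (well-definedness, the Witt-style identities for $\delta$, the binomial expansion mod $p^2$, the evaluation of both sides of $\psi\circ\wt F_Z = \wt F_Y\circ\psi$) is accurate and amounts to re-proving the ring-homomorphism property of $\theta^{*}$ and $\nu^{*}$ by hand; the paper simply takes these as given.
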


Recall that if $\wt X$ is any $W_2(k)$-scheme and $X=\wt X\otimes_{W_2(k)} k$, then there is a canonical affine morphism $\theta_{\wt X} \colon \wt X \to W_2(X)$ defined on functions by the formula
\[ 
  \theta^*_{\wt X}(f_0, f_1) = (\tilde f_0)^p + p\tilde f_1
\]
where $\tilde f_0, \tilde f_1\in \cO_{\wt X}$ are any liftings of $f_0, f_1\in \cO_X$. Moreover, if $\wt X$ is flat over $W_2(k)$, then a lifting $\wt F_X\colon \wt X\to \wt X$ of the absolute Frobenius $F_X\colon X\to X$ induces an affine morphism $\nu_{\wt X, \wt F_X} \colon W_2(X) \to \wt X$ in the opposite direction, defined on functions by
\begin{equation} \label{eqn:def-nu-map} 
  \nu^*_{\wt X, \wt F_X}(\tilde f) = \left(f, \delta(f)\right) \quad \text{ for } \tilde f\in \cO_{\wt X},
\end{equation}
where $f$ the image of $\tilde f$ in $\cO_X$ and $\delta(f)$ is the unique element such that $\wt F{}^*_X(\tilde f) = \tilde f^p  + p\cdot \delta(f)$.  We can recover $\wt F_X$ from $\nu_{\wt X, \wt F_X}$ by the formula $\wt F_X = \nu_{\wt X, \wt F_X}\circ\theta_{\wt X}$, while the other composition $\theta_{\wt X}\circ\nu_{\wt X, \wt F}$ coincides with the Witt vector Frobenius $W_2(F_X)$:
\begin{equation} \label{eqn:recover-f}
  \xymatrix{
    W_2(X) \ar[r]_-{\nu_{\wt X, \wt F}} \ar@/_1.6em/[rr]_{W_2(F_X)} & \wt{X} \ar[r]_-{\theta_{\wt X}} \ar@/^1.6em/[rr]^{\wt F} & W_2(X) \ar[r]_-{\nu_{\wt X, \wt F}} & \wt X.
  }
\end{equation}

\begin{proof}[Proof of Proposition~\ref{prop:frob-lift}]
We define the desired lifting $\psi \colon \wt Z\to \wt Y$ as the composition
\begin{equation} \label{eqn:def-psi}
  \wt Z \xrightarrow{\theta_{\wt Z}} W_2(Z) \xrightarrow{W_2(\phi)} W_2(Y) \xrightarrow{\nu_{\wt Y, \wt F_Y}} \wt Y.
\end{equation}
Explicitly, on functions, $\psi$ takes the form
\[ 
  \psi^*(\tilde f) = \widetilde{\phi^*(f)}^p + p \cdot \phi^*(\delta_{\wt F_Y}(\tilde f)),
\]
where $\widetilde{\phi^*(f)}\in\cO_{\wt Z}$ is a local section mapping to $\phi^*(f)\in \cO_Z$. It is thus clear that $\psi|_Z = F_Y\circ \phi$.

We now check that $\psi\circ \wt F_Z = \wt F_Y\circ \psi$ when $\wt Z$ is flat over $W_2(k)$ and endowed with a~Frobenius lifting $\wt F_Z$. This follows from the commutativity of the following diagram:
\[ 
  \xymatrix{
    \wt Z \ar@/^1.5em/[rr]^-{\wt F_{Z}} \ar[r]_-{\theta_{\wt Z}} & W_2(Z) \ar[r]_-{\nu_{\wt Z, \wt F_Z}} \ar[d]_{W_2(\phi)} & \wt Z \ar[r]^-{\theta_{\wt Z}} & W_2(Z) \ar[d]^{W_2(\phi)} \\
     & W_2(Y) \ar[r]_-{\nu_{\wt Y, \wt F_Y}} & \wt Y \ar@/_1.5em/[rr]_-{\wt F_{Y}} \ar[r]^-{\theta_{\wt Y}} & W_2(Y) \ar[r]^-{\nu_{\wt Y, \wt F_Y}} & \wt Y.
  }
\] 
Indeed, by \eqref{eqn:recover-f} the composition $\wt Z\to W_2(Z)\to \wt Z$ (resp.\ $\wt Y\to W_2(Y)\to \wt Y$) equals $\wt F_Z$ (resp.\ $\wt F_Y$).  To show the commutativity of the diagram, we note that the compositions $W_2(Z)\to \wt Z\to W_2(Z)$ and $W_2(Y)\to \wt Y\to W_2(Y)$ are the Witt vector Frobenius morphisms $W_2(F_Z)$ and $W_2(F_Y)$. These are functorial, which shows that the middle square (and hence the whole diagram) commutes. 

The final assertion follows from the commutativity of the following diagram:
\[ 
  \xymatrix{
    \wt Z \ar[d]_{\swt\phi} \ar[r]^-{\theta_{\wt Z}}  & W_2(Z) \ar[d]^{W_2(\phi)} & \\
    \wt Y \ar[r]^-{\theta_{\wt Y}} \ar@/_1.5em/[rrr]_-{\wt F_{Y}} & W_2(Y) \ar[rr]^-{\nu_{\wt Y, \wt F_Y}} & & \wt Y.
  }
\]
Here the square commutes by functoriality of the maps $\theta$.
\end{proof}

\begin{cor} \label{cor:lifts-of-Cartesian-diagrams}
  Let $f\colon X\to Y$ and $\phi\colon Z\to Y$ be morphisms of $k$-schemes, and let $(\wt X, \wt F_X)$, $(\wt Y, \wt F_Y)$, and $(\wt Z, \wt F_Z)$ be Frobenius liftings of $X$, $Y$, and $Z$, respectively. Let $\tilde f\colon\wt X\to \wt Y$ be a lifting of $f$ commuting with the Frobenius liftings, and let $\psi\colon\wt Z\to \wt Y$ be the lifting of $F_Y\circ \phi$ given by Proposition~\ref{prop:frob-lift}. Form the cartesian diagram
  \begin{equation} \label{eqn:frob-bc}
    \xymatrix{
      \wt W \ar[r] \ar[d] & \wt X \ar[d]^{\tilde f} \\
      \wt Z \ar[r]_{\psi} & \wt Y.
    }
  \end{equation}
  Then $\wt W$ admits a Frobenius lifting $\wt F_W$ such that the maps $\wt W\to \wt Z$ and $\wt W\to  \wt X$ commute with the Frobenius liftings.  Moreover, for every subscheme  $\wt V \subseteq \wt X$ compatible with $\wt F_X$, its preimage under $\wt W \to \wt X$ is compatible with $\wt F_W$. 
\end{cor}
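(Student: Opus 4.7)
The plan is to construct $\wt F_W$ via the universal property of the Cartesian square~\eqref{eqn:frob-bc}. A morphism $\wt W\to \wt W$ is the same as a pair $(\alpha,\beta)$ with $\alpha\colon \wt W\to \wt X$, $\beta\colon \wt W\to \wt Z$, and $\tilde f\circ \alpha = \psi\circ \beta$. I take $\alpha = \wt F_X\circ p_{\wt X}$ and $\beta = \wt F_Z\circ p_{\wt Z}$, where $p_{\wt X},p_{\wt Z}$ denote the two projections of $\wt W$. The required compatibility follows from the chain
\[
  \tilde f\circ \wt F_X\circ p_{\wt X} \;=\; \wt F_Y\circ \tilde f\circ p_{\wt X} \;=\; \wt F_Y\circ \psi\circ p_{\wt Z} \;=\; \psi\circ \wt F_Z\circ p_{\wt Z},
\]
obtained successively from the hypothesis $\tilde f\circ \wt F_X = \wt F_Y\circ \tilde f$, the Cartesian identity $\tilde f\circ p_{\wt X}=\psi\circ p_{\wt Z}$, and the middle assertion of Proposition~\ref{prop:frob-lift}. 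The resulting $\wt F_W$ tautologically satisfies $p_{\wt X}\circ \wt F_W = \wt F_X\circ p_{\wt X}$ and $p_{\wt Z}\circ \wt F_W = \wt F_Z\circ p_{\wt Z}$, so both projections commute with the respective Frobenius liftings.

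The reduction of $\wt F_W$ modulo $p$ is, by the universal property, determined by its compositions with the reduced projections; these equal $F_X$ and $F_Z$ precomposed with those projections, which by functoriality of the absolute Frobenius coincide with $F_W$ postcomposed with the projections. Hence the reduction of $\wt F_W$ is $F_W$, so $\wt F_W$ is indeed a lifting of the absolute Frobenius of $W$.

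For the compatibility statement, set $\wt U = p_{\wt X}^{-1}(\wt V)$, so that $\cI_{\wt U}\subseteq \cO_{\wt W}$ is generated by the image of $p_{\wt X}^{\#}\cI_{\wt V}$. For any local section $s$ of $\cI_{\wt V}$, the identity $p_{\wt X}\circ \wt F_W = \wt F_X\circ p_{\wt X}$ yields
\[
  \wt F_W^{\#}\bigl(p_{\wt X}^{\#}s\bigr) \;=\; p_{\wt X}^{\#}\bigl(\wt F_X^{\#}s\bigr) \;\in\; p_{\wt X}^{\#}\bigl(\cI_{\wt V}^p\bigr) \;\subseteq\; \cI_{\wt U}^p,
\]
so $\wt F_W^{\#}(\cI_{\wt U})\subseteq \cI_{\wt U}^p$, which is the required compatibility. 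I do not anticipate any serious obstacle: the only non-formal input is Proposition~\ref{prop:frob-lift}, which supplies both the existence of $\psi$ and the commutation $\psi\circ \wt F_Z = \wt F_Y\circ \psi$ that makes the fibre-product construction succeed. The rest is a routine chase of the universal property.
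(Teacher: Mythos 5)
Your proof is correct and follows exactly the route the paper intends: the corollary is stated without proof as an immediate consequence of Proposition~\ref{prop:frob-lift}, and the one non-formal input you identify---the identity $\psi\circ\wt F_Z=\wt F_Y\circ\psi$---is precisely what makes the fibre-product definition of $\wt F_W$ land in the right place. The verification that the reduction is $F_W$ and the ideal-theoretic check of compatibility are both routine and handled correctly.
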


Setting $Z=\Spec k$ (in which case $F_Z$ is an isomorphism), we obtain the following.

\begin{cor} \label{cor:froblift-speck}
Let $(\wt Y, \wt F_Y)$ be a Frobenius lifting of a $k$-scheme $Y$.
\begin{enumerate}[(a)]
  \item The construction of Proposition~\ref{prop:frob-lift} with $Z=\Spec k$ yields a section of the specialization map
  \[ 
    \wt Y(W_2(k)) \ra Y(k).
  \]
  In particular, every $k$-point of $Y$ lifts (canonically) to a point of $\wt Y$.
  \item Let $(\wt X, \wt F_X)$ be a Frobenius lifting of a $k$-scheme $X$, and let $\tilde f\colon \wt X\to \wt Y$ be a map commuting with the Frobenius liftings. Then for every $y\in Y(k)$, the fiber $X_y = f^{-1}(y)$ is $F$-liftable.
  \item In the situation of (b), for every subscheme $Z \subset X$ such that there exists a lifting $\wt Z \subset \wt X$ compatible with $\wt F_X$, the Frobenius lifting of $X_y$ is compatible with the induced lifting of the intersection $f^{-1}(y) \cap Z$.
\end{enumerate}
\end{cor}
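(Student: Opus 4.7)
My strategy is to obtain all three parts by specializing Proposition~\ref{prop:frob-lift} and Corollary~\ref{cor:lifts-of-Cartesian-diagrams} to the canonical Frobenius lifting $\wt Z = (\Spec W_2(k), W_2(F_k))$, where $W_2(F_k)$ is the Witt vector Frobenius.

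For part (a), I would take $Z=\Spec k$ and $\phi=y$ in Proposition~\ref{prop:frob-lift}, which produces a canonical map $\psi\colon \Spec W_2(k)\to \wt Y$ determined by the formula~\eqref{eqn:def-psi}. The explicit expression $\psi^*(\tilde f)=\widetilde{\phi^*(f)}^{\,p}+p\cdot\phi^*(\delta(f))$ shows that the reduction of $\psi$ modulo $p$ equals $F_Y\circ y$. Since $k$ is perfect, the operation $y\mapsto F_Y\circ y$ is a bijection on $Y(k)$, so composing the association $y\mapsto \psi$ with its inverse yields the desired section of the specialization map $\wt Y(W_2(k))\to Y(k)$; canonicity is built into the formula.

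For part (b), I would feed $(\wt X,\wt F_X)$, $(\wt Y,\wt F_Y)$, $(\Spec W_2(k), W_2(F_k))$, the morphism $\tilde f$, and the $k$-point $\phi=y$ into Corollary~\ref{cor:lifts-of-Cartesian-diagrams}. This yields a Cartesian square as in~\eqref{eqn:frob-bc} whose upper-left corner $\wt W$ carries a Frobenius lifting $\wt F_W$. Its reduction modulo $p$ is $W = X\times_{Y,\, F_Y\circ y}\Spec k$, which is canonically isomorphic to $X_y$ because $F_Y\circ y = y\circ F_{\Spec k}$ and $F_{\Spec k}$ is an automorphism of $\Spec k$; hence $(\wt W,\wt F_W)$ is a Frobenius lifting of $X_y$.

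Part (c) then follows from the last sentence of Corollary~\ref{cor:lifts-of-Cartesian-diagrams}: taking $\wt V=\wt Z\subseteq\wt X$, which is compatible with $\wt F_X$ by hypothesis, its preimage under $\wt W\to \wt X$ is automatically compatible with $\wt F_W$. Reducing modulo $p$ and using the identification from~(b), this preimage is precisely a lifting of the scheme-theoretic intersection $f^{-1}(y)\cap Z$ inside $X_y$, compatible with the induced Frobenius lifting. I do not expect any real obstacle; the only mildly delicate point is the bookkeeping around the twist $y\leftrightarrow F_Y\circ y$ on $k$-points, which dissolves because $k$ is perfect.
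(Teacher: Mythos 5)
Your proposal is correct and takes essentially the same route as the paper, which derives the corollary in a single line by setting $Z=\Spec k$ in Corollary~\ref{cor:lifts-of-Cartesian-diagrams} and noting that $F_{\Spec k}$ is an isomorphism because $k$ is perfect. The one place where you go beyond the paper's terse phrasing is in spelling out the Frobenius twist: the map produced by Proposition~\ref{prop:frob-lift} reduces to $F_Y\circ y = y\circ F_{\Spec k}$ rather than to $y$, and the fiber product $\wt W$ reduces to the $F_{\Spec k}$-pullback of $X_y$ rather than to $X_y$ itself. You correctly observe that these twists are absorbed because $F_{\Spec k}$ (and $W_2(F_k)$) are automorphisms over a perfect field --- which is precisely the content of the paper's parenthetical ``in which case $F_Z$ is an isomorphism.'' This bookkeeping is worth making explicit, and your treatment of it is sound; the remaining steps (feeding the data into Corollary~\ref{cor:lifts-of-Cartesian-diagrams} for (b) and invoking its final sentence for (c)) match the intended argument exactly.
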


\begin{example}
The scheme $\wt Y = \{xy=p\}\subseteq \bb{A}^2_{W_2(k)}$ does not admit a lifting of Frobenius (not even locally), because the $k$-point $(0,0)\in Y = \{xy=0\}$ does not admit a lifting modulo~$p^2$. Of course $Y$ admits a Frobenius lifting 
\[ 
  \wt Y{}'=\{xy=0\}\subseteq \bb{A}^2_{W_2(k)}, 
  \quad \wt F'^{*}_Y(x)=x^p, \quad \wt F'^{*}_Y(y)=y^p.
\]
Recall that by Corollary~\ref{cor:unique-f-lift}, since $Y$ is $F$-split, it admits at most one lifting to which $F_Y$ lifts.  
\end{example}


\section{\texorpdfstring{$F$-liftability of surfaces}{F-liftability of surfaces}}
\label{s:surfaces}

The goal of this section is to show Conjecture~\ref{conj:froblift} for smooth surfaces. Let $(\wt X, \wt F_X)$ be a Frobenius lifting of a smooth $k$-scheme $X$. As in \S\ref{ss:flift-fsplit}, we have the associated effective $\QQ$-divisor 
\[ 
  \Delta_{\wt{F}_X} = \frac{1}{p-1}{\rm div}(\det(\xi_{\wt F_X})) \ \sim_{\QQ} \ -K_X.
\]
A~careful analysis of $\Delta_{\wt{F}_X}$ plays a vital role in this section.  We further define $D_{\wt{F}_X} = \lfloor \Delta_{\wt{F}_X} \rfloor$. Note that $D_{\wt{F}_X}$ is reduced (see Remark~\ref{rem:coefficients-fsplit}). If $X$ is a toric variety with its standard Frobenius lifting, then $\smash{\Delta_{\wt F_X} = D_{\wt F_X}}$ is the toric boundary (the complement of the open orbit) of~$X$.

First, we tackle the case of rational surfaces.

\begin{lemma} \label{lemma:frobenius_blow-ups}  
  Let $X$ be a smooth surface over $k$, and let 
  \[
  \pi\colon Y=\Bl_x X\ra X
  \]
   be the blow-up of $X$ at a closed point $x\in X$. Let $(\wt X, \wt F_X)$ and $(\wt Y, \wt F_Y)$ be Frobenius liftings of $X$ and $Y$, and let $\swt \pi \colon \wt X\to \wt Y$ be a lifting of $\pi$ satisfying $\wt F_X \circ \swt \pi = \swt \pi \circ \wt F_Y$. Suppose that ${\rm Supp}\, \Delta_{\wt F_X}$ has simple normal crossings at $x$. Then $x \in \Sing D_{\wt{F}_X}$.
\end{lemma}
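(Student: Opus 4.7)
The plan is to use the two compatibility properties of the divisor $\Delta_{\wt F_?}$ from Corollary~\ref{cor:all-properties-of-divisor} to pin down the multiplicity of $\Delta_{\wt F_X}$ at $x$. First, since $\pi\colon Y\to X$ is a proper birational morphism of smooth surfaces, we are in the situation of Theorem~\ref{thm:descending-frob-lift}(b.ii), so Corollary~\ref{cor:all-properties-of-divisor}(b) gives $\Delta_{\wt F_X}=\pi_*\Delta_{\wt F_Y}$. Next, the exceptional curve $E\subseteq Y$ is a smooth $\PP^1$ with $E^2=-1$, so $\cO_E(mE)\isom\cO_{\PP^1}(-m)$ and $H^0(E,\cO_E(mE))=0$ for $1\le m\le p$. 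Corollary~\ref{cor:all-properties-of-divisor}(a) then gives $E\le\Delta_{\wt F_Y}$, and combining with Remark~\ref{rem:coefficients-fsplit} (coefficients are at most one) we deduce that the coefficient of $E$ in $\Delta_{\wt F_Y}$ is exactly one. Decomposing $\Delta_{\wt F_Y}=E+\Delta^\bullet$ with $\Delta^\bullet$ not containing $E$, and applying $\pi_*$, we see $\Delta^\bullet=\pi_*^{-1}\Delta_{\wt F_X}$.

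Second, I would use $\QQ$-linear equivalences to read off the multiplicity $m\coloneqq\mathrm{mult}_x\Delta_{\wt F_X}$. The standard total transform formula for the blow-up at a smooth point gives
\[
  \pi^*\Delta_{\wt F_X}=\pi_*^{-1}\Delta_{\wt F_X}+mE,
\]
hence
\[
  \Delta_{\wt F_Y}=E+\pi^*\Delta_{\wt F_X}-mE\ \sim_\QQ\ -\pi^*K_X+(1-m)E.
\]
On the other hand, $\Delta_{\wt F_Y}\sim_\QQ-K_Y=-\pi^*K_X-E$, and since $E$ is not $\QQ$-linearly trivial (being numerically non-trivial, as $E^2=-1$), comparing coefficients of $E$ forces $1-m=-1$, i.e.\ $m=2$.

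Finally, I would combine this with the snc hypothesis at $x$. Since $X$ is a surface with $\mathrm{Supp}\,\Delta_{\wt F_X}$ having simple normal crossings at $x$, at most two smooth branches of $\mathrm{Supp}\,\Delta_{\wt F_X}$ pass through $x$ and they meet transversally. If only one branch $D_1$ passed through $x$, the multiplicity at $x$ would be the coefficient $c_1\le 1$, contradicting $m=2$. Hence there are two distinct components $D_1,D_2$ of $\Delta_{\wt F_X}$ through $x$ with coefficients $c_1,c_2\in(0,1]$ satisfying $c_1+c_2=2$, so $c_1=c_2=1$. Both $D_1$ and $D_2$ therefore appear in $D_{\wt F_X}=\lfloor\Delta_{\wt F_X}\rfloor$, and since they meet transversally at $x$, we conclude $x\in\Sing D_{\wt F_X}$. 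The only step requiring real input is the numerical computation of $m$; the snc plus coefficient bound then yields the singularity essentially for free, and I do not expect any serious obstacle.
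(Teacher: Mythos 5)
Your proof is correct and follows essentially the same strategy as the paper: both arguments compute the multiplicity of $\Delta_{\wt F_X}$ at $x$ using $\pi_*\Delta_{\wt{F}_Y} = \Delta_{\wt{F}_X}$, the bound $E \le \Delta_{\wt{F}_Y}$, and the linear equivalence with $K_{Y/X} = E$, and then close the argument with the snc hypothesis plus the coefficient bound. The only organizational difference is that you pin down the coefficient of $E$ in $\Delta_{\wt F_Y}$ as exactly $1$ (using Remark~\ref{rem:coefficients-fsplit} on $Y$) before invoking the total transform formula, whereas the paper computes $\pi^*\Delta_{\wt F_X} - \Delta_{\wt F_Y} = E$ directly from its exceptional support and only invokes the coefficient bound on $X$ at the final step; the two routes are equivalent.
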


Readers familiar with the language of birational geometry may notice that this is a direct consequence of the fact that $x$ is a log canonical center of $(X, \smash{\Delta_{\wt{F}_X}})$ by Corollary~\ref{cor:all-properties-of-divisor}(a) (in fact, this shows that the above result is valid in higher dimensions for blow-ups along arbitrary smooth centers). We provide a more elementary explanation below.

\begin{proof}
For the exceptional divisor $E = {\rm Exc}(\pi)$, we have $E \leq \Delta_{\wt{F}_Y}$ by Corollary~\ref{cor:all-properties-of-divisor}(a). Since $\pi_*\Delta_{\wt{F}_Y} = \Delta_{\wt{F}_X}$ (see Corollary~\ref{cor:all-properties-of-divisor}(b)), the support of $\pi^*\Delta_{\wt{F}_X} - \Delta_{\wt{F}_Y}$ is exceptional. By definition, this $\QQ$-divisor is linearly equivalent to $K_{Y/X}$, which is equal to $E$ by \cite[Proposition V.3.3]{hartshorne77}, and so 
\[
  \pi^*\Delta_{\wt{F}_X} - \Delta_{\wt{F}_Y} = E.
\]
In particular, $\pi^*\Delta_{\wt{F}_X} = 2E + \Delta'$, where $E \not \subseteq {\rm Supp}\, \Delta'$. As ${\rm Supp}\, \Delta_{\wt{F}_X}$ has simple normal crossings at $x$ and the coefficients of $\Delta_{\wt{F}_X}$ are at most one (see Remark~\ref{rem:coefficients-fsplit}), this is only possible if $x \in \Sing D_{\wt F_X}$.
\end{proof}

\begin{remark} \label{remark:toric_blow-ups} 
Let $f \colon (Y, D_Y) \to (X, D_X)$ be a toric morphism between two-dimensional toric pairs, which on the level of schemes is a blowing-up of a smooth point $x \in X$. Then $x \in \Sing(D_X)$ and $D_Y = f_*^{-1}D_X + \mathrm{Exc}(f)$. Moreover, the converse is also true, that is the blowing-up of a smooth toric surface at toric points is a toric morphism.
\end{remark} 

Let us call a pair $(X,D)$ of a normal variety $X$ and a reduced effective divisor $D$ on $X$ \emph{sub-toric} if $X$ admits the structure of a toric variety such that $D$ is invariant under the torus action. If $D$ is the maximal invariant divisor, then we call $(X,D)$ toric (cf.\ \cite[\S 2.1]{PartI}). 

\begin{lemma} \label{lemma:frobenius_on_hirzebruch} 
  Let $\FF_n=\PP_{\PP^1}(\cO_{\PP^1}\oplus\cO_{\PP^1}(n))$ be the $n^{\rm th}$ Hirzebruch surface for $n\geq 0$. Then for every Frobenius lifting $(\wt \FF_n, \wt F)$ of $\FF_n$, the pair $(\FF_n, D_{\wt{F}})$ is sub-toric. Moreover, if $n=0$, then $\mathrm{Supp}\, \Delta_{\wt{F}}$ has simple normal crossings, and if $n>0$, then $\mathrm{Supp}\, \Delta_{\wt{F}}$ has simple normal crossings at every point of the negative section $C \subseteq \FF_n$.
\end{lemma}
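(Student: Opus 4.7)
The plan is to descend along the $\PP^1$-fibration $\pi\colon \FF_n\to \PP^1$ and then analyze the decomposition of $\Delta_{\wt F}$ into its horizontal and vertical parts. Since $\cO_{\PP^1}\isomto \pi_*\cO_{\FF_n}$ and $R^1\pi_*\cO_{\FF_n}=0$, Theorem~\ref{thm:descending-frob-lift}(b.i) produces a compatible Frobenius lifting $(\wt \PP^1,\wt F_{\PP^1})$ and a lift $\swt\pi$ of $\pi$ commuting with the Frobenius liftings on source and target. Corollary~\ref{cor:all-properties-of-divisor}(c) then yields
\[
\Delta_{\wt F}=\Delta_{\wt F}^{h}+\pi^*\Delta_{\wt F_{\PP^1}},\qquad \Delta_{\wt F}^{h}\sim_{\QQ} -K_{\FF_n/\PP^1}\sim C_0+C_\infty,
\]
where $C_0$ is the negative section (for $n>0$) and $C_\infty$ denotes any section with $C_\infty^2=n$ disjoint from $C_0$. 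By Remark~\ref{rem:coefficients-fsplit}, $\Delta_{\wt F_{\PP^1}}$ is effective of degree $2$ with coefficients at most $1$, so $\lfloor\Delta_{\wt F_{\PP^1}}\rfloor$ has at most two support points and contributes at most two fibers to $D_{\wt F}$.

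For $n>0$, set $C:=C_0$. Then $C^2=-n<0$ implies $H^0(C,\cO_C(mC))=0$ for $1\le m\le p$, so Corollary~\ref{cor:all-properties-of-divisor}(a) gives $C\le\Delta_{\wt F}^{h}$ and we may write $\Delta_{\wt F}^{h}=C+H$ with $H$ effective horizontal of fiber-degree $1$ and $H\sim_{\QQ}C_\infty$. The intersection $H\cdot C=C_\infty\cdot C=0$, together with the fact that $C$ is not a component of $H$, forces every prime component of $H$ to be disjoint from $C$. Consequently, at any point $x\in C$ only $C$ itself and at most the fiber $\pi^{-1}(\pi(x))$ pass through $x$; since fibers meet $C$ transversally, this establishes the SNC assertion along $C$. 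For sub-toricity, a fiber-degree count shows that $H$ admits at most one integer-coefficient prime component, which is then forced to be a section $D'$ with $D'^2=n$ and $D'\cdot C=0$; thus $D_{\wt F}\le C+D'+F_1+F_2$ for at most two fibers $F_1, F_2$, and this divisor may be realized as a subset of the toric boundary of $\FF_n$ under a toric structure whose positive section is $D'$ and whose base torus fixed points are $\pi(F_1)$, $\pi(F_2)$.

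For $n=0$ we instead invoke Corollary~\ref{cor:flift-products}: every Frobenius lifting of $\FF_0=\PP^1\times\PP^1$ decomposes as a product $\wt F_1\times\wt F_2$ of Frobenius liftings of the factors, so $\Delta_{\wt F}=\pi_1^*\Delta_{\wt F_1}+\pi_2^*\Delta_{\wt F_2}$ is a union of fibers of the two projections, which are mutually transverse everywhere. This gives SNC on all of $\FF_0$, and $D_{\wt F}=\pi_1^*D_1+\pi_2^*D_2$ with each $D_i=\lfloor\Delta_{\wt F_i}\rfloor$ at most two points; after choosing coordinates on each factor so that $D_i\subseteq\{0,\infty\}$, this exhibits $(\FF_0,D_{\wt F})$ as sub-toric. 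The one delicate step is the disjointness of $H$ from $C$ in the case $n>0$: it rests on combining the linear equivalence $H\sim_{\QQ}C_\infty$ with the numerical identity $C_\infty\cdot C=0$ and effectivity of $H$, after which the SNC and sub-toricity claims reduce to routine bookkeeping with coefficients and fiber-degrees.
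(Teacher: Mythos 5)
Your proof is correct and proceeds along the same lines as the paper's: split into $n=0$ and $n>0$; for $n=0$ invoke the product structure of Frobenius liftings (Corollary~\ref{cor:flift-products}); for $n>0$ descend along $\pi$ via Theorem~\ref{thm:descending-frob-lift}(b.i), apply Corollary~\ref{cor:all-properties-of-divisor}(c) to split $\Delta_{\wt F}$ into horizontal and vertical parts, use Corollary~\ref{cor:all-properties-of-divisor}(a) to extract $C$ from $\Delta^h$, and then deduce $H\cdot C=0$ from the linear equivalence class of $H$, forcing disjointness. The only cosmetic difference is that you phrase the intersection computation via $H\sim_{\QQ}C_\infty$ with $C_\infty\cdot C=0$, whereas the paper expands $-(K_{\FF_n}+C+\Delta^v)\cdot C$ directly by adjunction; the fiber-degree count $H\cdot G=1$ giving at most one section in $\lfloor H\rfloor$ is also the same step.
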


Write $\FF_n = \PP_{\PP^1}(E)$ for $E = \cO_{\PP^1}\oplus\cO_{\PP^1}(n)$. A choice of the splitting $E \isom \cO_{\PP^1}\oplus\cO_{\PP^1}(n)$ provides $\FF_n$ with a natural toric structure for which the natural morphisms $\PP_{\PP^1}(\cO_{\PP^1}) \to \FF_n$ and $\PP_{\PP^1}(\cO_{\PP^1}(n)) \to \FF_n$ are toric. Thus $(\FF_n, D)$ is a toric pair if and only if

\begin{itemize}
  \item for $n=0$, we have $D = G_1 + G'_1 + G_2 + G'_2$, where $G_1$, $G'_1$, and $G_2$, $G'_2$ are distinct fibers of the two projections $\pi_1, \pi_2 \colon \FF_0 \to \PP^1$.
  \item for $n>0$, we have $D = C + C' + G_1 + G_2$, where $C$ is the unique negative section (corresponding to $\PP_{\PP^1}(\cO_{\PP^1}) \to \FF_n$), $C'$ is a section disjoint from $C$, and $G_1$, $G_2$ are two distinct fibers of the projection $\pi \colon\FF_n \to \PP^1$. 
\end{itemize}

\begin{proof}
Let us fix a Frobenius lifting $(\wt \FF_n, \wt F)$. If $n=0$, then, by Corollary~\ref{cor:flift-products}, we have $\Delta_{\wt{F}} = \pi_1^* \Delta_1 + \pi_2^*\Delta_2$, where $\Delta_1$ and $\Delta_2$ are effective $\QQ$-divisors on $\PP^1$, and so $\mathrm{Supp}\, \Delta_{\wt{F}}$ is simple normal crossing. Since $\omega_{\PP^1 \times \PP^1} \isom \cO_{\PP^1 \times \PP^1}(-2,-2)$, both $\lfloor \Delta_1 \rfloor$ and $\lfloor \Delta_2 \rfloor$ consist of at most two irreducible divisors, which concludes the proof.

If $n>0$, then by Theorem~\ref{thm:descending-frob-lift}(b.i) we get a compatible Frobenius lifting of $\PP^1$ and so Corollary~\ref{cor:all-properties-of-divisor}(c) implies that
\begin{align*}
  \Delta^{h} &\sim_{\QQ} -K_{\FF_n / \PP^1}, \\
  \Delta^{v} &\sim_{\QQ} -\pi^*K_{\PP^1},
\end{align*} 
where $\Delta^{h}$ and $\Delta^{v}$ are the horizontal and the vertical part of $\smash{\Delta_{\wt F}}$, respectively. Moreover, Corollary~\ref{cor:all-properties-of-divisor}(a) gives $\Delta^{h} = \Delta' + C$, where $\Delta'$ is an effective $\QQ$-divisor such that ${C \not \subseteq \mathrm{Supp}\, \Delta'}$. 

As $K_{\FF_n} + \Delta^{v} + \Delta' + C \sim_{\QQ} 0$, we have
\[
  \Delta' \cdot C = -(K_{\FF_n} + C + \Delta^v ) \cdot C = 2 - \Delta^v \cdot C = 0
\]
by adjunction, and so $\Delta'$ is disjoint from $C$. Given that ${\rm Supp}\, (\Delta^v + C)$ is simple normal crossing,  so is ${\rm Supp}\, \Delta_{\wt{F}}$ along $C$. Moreover, for a fiber $G$ of $\pi$
\[
  \Delta' \cdot G = -(K_{\FF_n} + \Delta^v + C) \cdot G = -(K_{\FF_n} + G)\cdot G -(\Delta^v + C) \cdot G = 1,
\]
by adjunction as $G^2=0$, and so $\lfloor \Delta' \rfloor$ is zero or is a single section disjoint from $C$. Since $\lfloor \Delta^v \rfloor$ consists of at most two distinct fibers, this concludes the proof of the lemma.
\end{proof}

\begin{prop} \label{prop:f-lifts_on_rational_surfaces} 
  Let $Y$ be a smooth projective rational surface. If $Y$ is $F$-liftable, then it is toric.
\end{prop}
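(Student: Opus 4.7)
The plan is to induct on the Picard rank $\rho(Y)$, exploiting the classical structure theory of smooth projective rational surfaces: if $\rho(Y) \geq 3$ then $Y$ carries a $(-1)$-curve $E$, and its contraction $\pi \colon Y \to Y'$ yields a smooth rational surface with $\rho(Y') = \rho(Y) - 1$; if $\rho(Y) \leq 2$, then $Y$ is either $\PP^2$ or a Hirzebruch surface $\FF_n$, each of which is toric. The base case is thus handled by the classification, with Lemma~\ref{lemma:frobenius_on_hirzebruch} furnishing the extra sub-toric information on $\FF_n$.

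For the inductive step, Theorem~\ref{thm:descending-frob-lift}(b.ii) provides a unique Frobenius lifting $(\wt Y', \wt F_{Y'})$ for which $\swt\pi$ commutes with the Frobenius liftings, and Corollary~\ref{cor:all-properties-of-divisor}(b) gives $\Delta_{\wt F_{Y'}} = \pi_*\Delta_{\wt F_Y}$. By induction $Y'$ is toric, and to conclude that $Y$ is toric it suffices by Remark~\ref{remark:toric_blow-ups} to show that $x := \pi(E)$ is a torus-fixed point of a suitable toric structure on $Y'$. The key input is Lemma~\ref{lemma:frobenius_blow-ups}: assuming the support of $\Delta_{\wt F_{Y'}}$ has simple normal crossings at $x$, one obtains $x \in \Sing D_{\wt F_{Y'}}$, so that $x$ is a node of a reduced divisor sitting inside the toric boundary, hence a torus-fixed point.

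To close the induction, I would strengthen the hypothesis to the following: every $F$-liftable smooth projective rational surface $Y$ admits a toric structure for which $D_{\wt F_Y}$ lies in the toric boundary, and the support of $\Delta_{\wt F_Y}$ has simple normal crossings at every node of $D_{\wt F_Y}$. The base case for $\FF_n$ is supplied by Lemma~\ref{lemma:frobenius_on_hirzebruch} (with the case of $\PP^2$ being vacuous since no inductive step originates there). For the inductive step, one endows $Y = \Bl_x Y'$ with the induced toric structure obtained by adjoining $E$ to the toric boundary of $Y'$; the identity $\pi^*\Delta_{\wt F_{Y'}} = \Delta_{\wt F_Y} + E$ extracted from the proof of Lemma~\ref{lemma:frobenius_blow-ups} lets one pull the relevant SNC properties back to $Y$ from SNC on $Y'$ together with the toric SNC along $E$.

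The main obstacle is the propagation of the SNC condition, particularly handling the base case $Y = \FF_n$ with $n \geq 2$, where Lemma~\ref{lemma:frobenius_on_hirzebruch} guarantees SNC only along the negative section $C$. One expects that all nodes of the integral part $D_{\wt F_{\FF_n}}$ nevertheless satisfy SNC — e.g., by exploiting the intersection-number analysis from Lemma~\ref{lemma:frobenius_on_hirzebruch} which controls the horizontal and vertical parts — and that a careful choice of the $(-1)$-curve to contract at each stage (so that its image stays on the controlled part of the sub-toric boundary) keeps the induction running.
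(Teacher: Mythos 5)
Your high-level strategy (descend along contractions of $(-1)$-curves, use Lemma~\ref{lemma:frobenius_blow-ups} to show a blow-up center must be a node of $D_{\wt F}$, then use Remark~\ref{remark:toric_blow-ups} and Lemma~\ref{lemma:frobenius_on_hirzebruch} to lift torality back up) is indeed the same skeleton as the paper's. But there is a genuine circularity in how you have set up the induction. To apply Lemma~\ref{lemma:frobenius_blow-ups} at $x=\pi(E)$ you need to know beforehand that $\mathrm{Supp}\,\Delta_{\wt F_{Y'}}$ has simple normal crossings \emph{at $x$}; your strengthened induction hypothesis only grants SNC at points which are \emph{already known} to be nodes of $D_{\wt F_{Y'}}$. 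But whether $x$ is such a node is exactly the conclusion you are trying to draw from the lemma — you cannot assume it going in. The relation $\pi^*\Delta_{\wt F_{Y'}}=\Delta_{\wt F_Y}+E$ gives $\mathrm{mult}_x(\Delta_{\wt F_{Y'}})=2$, but without SNC at $x$ this still allows $x$ to lie on a single singular component, on two tangent components, or on a reduced component together with several fractional components, none of which make $x$ a node of $\lfloor \Delta_{\wt F_{Y'}}\rfloor$. Your fallback — ``a careful choice of the $(-1)$-curve to contract at each stage'' — is where the real work would have to happen, and you do not say how to carry it out.

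The paper breaks this circularity by controlling the geometry \emph{globally} rather than node by node: it fixes a birational morphism $Y\to\FF_n$ factored into monoidal transformations and chooses $n$ \emph{minimal}. Minimality forces $\pi(\mathrm{Exc}\,\pi)\subseteq C$, the negative section, because blowing up a point off $C$ already permits contraction down to $\FF_{n-1}$. Since Lemma~\ref{lemma:frobenius_on_hirzebruch} guarantees SNC for $\mathrm{Supp}\,\Delta_{\wt F}$ along the \emph{entire} curve $C$ (not merely at nodes of $D_{\wt F}$), every blow-up center in the chain sits where SNC is known, and the SNC condition then propagates up the tower to each intermediate $X_i$. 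This is the structural input that closes the gap you left open; without it (or a substitute of equal strength, e.g.\ directly controlling the positions of all the non-integral components of $\Delta_{\wt F_{\FF_n}}$), the inductive step does not go through.
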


\begin{proof}
Since $\PP^2$ is toric, we can assume that $Y \not \isom \PP^2$. Every smooth rational surface which is not isomorphic to $\PP^2$ admits a birational morphism to a Hirzebruch surface $\pi\colon Y\to \FF_n$ for some $n\geq 0$, which factors into a sequence of monoidal transformations
\[ 
  Y = X_m \xrightarrow{\pi_{m-1}} X_{m-1} \ra  \cdots \ra X_1 \xrightarrow{\pi_0} X_0 = \FF_n, \quad X_{i+1} = \Bl_{x_i} X_i.
\]
We assume that $n$ is \emph{minimal} among such. It follows that if $n>0$, then $\pi(\mathrm{Exc}(\pi)) \subseteq C$, where $C\subseteq \FF_n$ is the negative section. Indeed, the blow-up $\Bl_{x} \FF_n$ at any $x \not \in C$ admits a~morphism to $\FF_{n-1}$ constructed by contracting the strict transform of the fiber through $x$ of the natural projection $\FF_n \to \PP^1$.

Let $(\wt Y, \wt F_Y)$ be a Frobenius lifting of $Y=X_m$. By Theorem~\ref{thm:descending-frob-lift}(b), there exist Frobenius liftings $(\wt X_i, \wt F_{i})$ of $X_i$ for $0 \leq i \leq m$, and liftings $\swt \pi_i\colon \wt X_{i+1}\to \wt X_i$ such that $\wt F_{i}\circ \swt \pi_i = \swt \pi_i \circ \wt F_{i+1}$. By Lemma~\ref{lemma:frobenius_on_hirzebruch} we know that $(X_0, D_{\wt{F}_0})$ is sub-toric and $\Delta_{\wt{F}_0}$ has simple normal crossings at $\pi(\mathrm{Exc}(\pi))$. Therefore, for every $i>0$ the $\QQ$-divisor $\Delta_{\wt F_{i}}$ has simple normal crossings at $x_i \in X_i$ (it is contained in the union of the exceptional locus of $\pi$ and the support of the strict transform of $\Delta_{\wt F_0}$ by Corollary~\ref{cor:all-properties-of-divisor}(b)).

By induction we can show that $(X_i, D_{\wt{F}_{i}})$ is sub-toric for every $0 \leq i \leq m$. Indeed, if $(X_{i-1}, D_{\wt{F}_{i-1}})$ is sub-toric, then by Remark~\ref{remark:toric_blow-ups} it is enough to show that $x_{i-1} \in \Sing D_{\wt{F}_{i-1}}$, which follows by Lemma~\ref{lemma:frobenius_blow-ups}. This concludes the proof.
\end{proof}

\begin{remark} 
In the course of the proof, we showed that if $(\wt X, \wt{F}_X)$ is a Frobenius lifting of a smooth projective rational surface $X\not\simeq \PP^2$, then $(X, \lfloor \Delta_{\wt{F}_X} \rfloor)$ is sub-toric. This is false for some liftings of Frobenius on $\PP^2$.
\end{remark} 

We now turn our attention to ruled surfaces. We say that a rank two vector bundle $E$ on a~curve $C$ is \emph{normalized} if $H^0(C,E)\neq 0$ and $H^0(C, E \otimes \cL)=0$ for every line bundle $\cL$ such that $\deg \cL < 0$. Given a ruled surface $X = \PP_C(E)$ we can assume that $E$ is normalized by replacing $E$ with $E \otimes \cL$ for some line bundle $\cL$.

\begin{prop} \label{prop:ruled_surface} 
  Let $X=\PP_C(E)$ be a smooth projective ruled surface for a normalized rank two vector bundle $E$ on an ordinary elliptic curve $C$. Then $X$ is $F$-liftable if and only if $E$ is not a non-split extension of $\cO_C$ with itself.
\end{prop}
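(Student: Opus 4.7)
I would first classify normalized rank-two bundles on $C$. A non-zero section $s \in H^0(C,E)$ cannot vanish along any effective divisor $D>0$, since otherwise $E\otimes\cO_C(-D)$ would acquire a section and violate normalization. Hence $E$ sits in a short exact sequence $0\to \cO_C\to E\to L\to 0$ with $\deg L=\deg E\le 0$, whose extension class lies in $\Ext^1(L,\cO_C)\cong H^0(C,L)^*$ (using $\omega_C=\cO_C$); this group vanishes unless $L=\cO_C$. Consequently the case of a non-split self-extension of $\cO_C$ corresponds exactly to the Atiyah indecomposable bundle $\cE_2$ (the unique indecomposable rank-two degree-zero bundle on $C$ with a non-zero section), while every other normalized bundle splits as $\cO_C\oplus L$ for some $L$ of non-positive degree.

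If $E\cong\cO_C\oplus L$ splits, then $X=\PP_C(\cO_C\oplus L)\to C$ is a Zariski-locally trivial $\PP^1$-bundle equipped with a natural fiberwise $\mathbb{G}_m$-action scaling the $\cO_C$-summand, exhibiting $X$ as a toric fibration over the ordinary elliptic curve $C$. Example~\ref{ex:flift}(d) then yields the $F$-liftability, which settles the easy direction.

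For the converse, suppose $E=\cE_2$ and, toward a contradiction, that $X=\PP_C(\cE_2)$ admits a Frobenius lifting $(\wt X,\wt F_X)$. Since $\pi_*\cO_X=\cO_C$ and $R^1\pi_*\cO_X=0$, Theorem~\ref{thm:descending-frob-lift}(b.i) produces a compatible Frobenius lifting $(\wt C,\wt F_C)$ of $C$, and Corollary~\ref{cor:all-properties-of-divisor}(c) gives $\Delta_{\wt F_X}^v=\pi^*\Delta_{\wt F_C}$ together with $\Delta_{\wt F_X}^h\sim_{\QQ}-K_{X/C}$. As $\Delta_{\wt F_C}$ is effective and $\QQ$-linearly equivalent to $-K_C=0$ on the elliptic curve $C$, it must vanish. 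Combined with $\det\cE_2=\cO_C$ (so that $-K_{X/C}\sim 2\xi$ for $\xi=c_1(\cO_X(1))$), this makes $\Delta_{\wt F_X}$ an effective $\QQ$-divisor with $\Delta_{\wt F_X}\sim_{\QQ}2\xi$; moreover, the section $C_0\subseteq X$ coming from the quotient $\cE_2\twoheadrightarrow\cO_C$ has class $\xi$.

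The contradiction comes from directly enumerating $|2n\xi|$: clearing denominators, $n\Delta_{\wt F_X}\sim 2n\xi$ in $\Pic(X)$ for some $n\ge 1$, so $n\Delta_{\wt F_X}$ lies in $|2n\xi|$, whose dimension equals $h^0(C,\Sym^{2n}\cE_2)-1=h^0(C,\cE_{2n+1})-1=0$ by Atiyah's identification $\Sym^m\cE_2\cong\cE_{m+1}$ together with $h^0(C,\cE_r)=1$. Hence $n\Delta_{\wt F_X}=2nC_0$, i.e.\ $\Delta_{\wt F_X}=2C_0$ --- an integral divisor with coefficient $2$, violating the upper bound of $1$ from Remark~\ref{rem:coefficients-fsplit}. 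The main obstacle I anticipate is making this last computation entirely rigorous, in particular establishing that $|2\xi|$ contains only the non-reduced divisor $2C_0$, which reduces to the cohomological rigidity of Atiyah's indecomposable bundles.
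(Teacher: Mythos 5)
Your argument for the hard direction (showing $\PP_C(\cE_2)$ is not $F$-liftable, where $\cE_2$ is the degree-zero Atiyah bundle) is correct and, moreover, genuinely different from the paper's. The paper constructs an \'etale $\FF_p$-cover $C'\times\PP^1\to X$, pulls back the associated $\QQ$-divisor, and derives a contradiction from the non-existence of an $\FF_p$-invariant $F$-splitting of $\PP^1$ (their Lemma~\ref{lem:p1}). Your route instead computes $h^0(X,\cO_X(2n\xi))=h^0(C,\Sym^{2n}\cE_2)=h^0(C,\cE_{2n+1})=1$ via Atiyah's identification $\Sym^m\cE_2\cong\cE_{m+1}$, forcing $\Delta_{\wt F_X}=2C_0$ and then invoking the coefficient bound of Remark~\ref{rem:coefficients-fsplit}. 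This is a clean alternative that avoids the explicit cover; both hinge on the $\QQ$-divisor formalism of Corollary~\ref{cor:all-properties-of-divisor} and the coefficient bound.

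However, there is a real gap in your classification of normalized rank-two bundles, and it breaks the easy direction. From the fact that the section $\cO_C\hookrightarrow E$ is nowhere vanishing you correctly obtain $0\to\cO_C\to E\to L\to 0$ with $\deg L=\deg E$, but you then assert $\deg E\le 0$ without justification, and this is false. Normalization (no sections after twisting by a line bundle of \emph{negative} degree) is perfectly compatible with $\deg E=1$: the normalized indecomposable bundles of rank two on an elliptic curve have degree $0$ or $1$ (Hartshorne, Theorem~V.2.15). In the degree-one case $\Ext^1(L,\cO_C)\cong H^0(C,L)^\vee\ne 0$ because $\deg L=1>0$, so non-split extensions do exist, and $E$ is neither decomposable nor the degree-zero self-extension $\cE_2$. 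Your proposal therefore leaves unproven that $X=\PP_C(E)$ is $F$-liftable in this case. The paper handles it by identifying $X\cong\Sym^2(C)$ and exhibiting a Frobenius lifting by descending the $\ZZ/2\ZZ$-equivariant lifting $\wt F_C\times\wt F_C$ on $\wt C\times\wt C$ to the symmetric quotient; this is not covered by Example~\ref{ex:flift}(d), since the $\PP^1$-bundle is not Zariski-locally trivial with a fiberwise torus action in any obvious way, so you would need to add this case explicitly.
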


\begin{proof}
If $E$ is decomposable (that is, a direct sum of two line bundles), then $X$ is $F$-liftable by Example~\ref{ex:flift}(d). Hence, we can assume that $E$ is indecomposable. By \cite[Theorem V.2.15]{hartshorne77}, there are only two such ruled surfaces corresponding to $E$ being a non-split extension of $\cO_C$ with $\cO_C(c)$ where $c \in C$, and $E$ being a non-split extension of $\cO_C$ with itself. 

In the former case, $X \isom \Sym^2(C)$ (see for instance \cite[Section 6]{fuentes}), and it is easy to see that it is $F$-liftable. Indeed, let $(\wt C, \wt F_C)$ be the canonical Frobenius lifting of $C$ (see Example~\ref{ex:flift}(a)), where $\wt F_C \colon \wt C \to \wt C$. Then $\wt F_C \times \wt F_C \colon \wt C \times \wt C \to \wt C \times \wt C$ is equivariant under the natural $\ZZ/2\ZZ$ action by swapping the coordinates, and so it descends to $\Sym^2(\wt C) \to \Sym^2(\wt C)$.

Therefore, we are left to show that $X$ is not $F$-liftable when $E$ is a non-split extension of $\cO_C$ with itself. By contradiction assume that it does admit a Frobenius lifting $(X, \wt F_X)$. By Lemma~\ref{lem:flat_ruled_surface}, we know that $X$ is a quotient of $Y = C' \times \PP^1$ by $\FF_p$ acting independently on $C'$ and $\PP^1$, where $C'$ is the Frobenius twist of $C$, and the action on $(x:y) \in \PP^1(k)$ is defined as $(x : y) \mapsto (x+ly : y)$  for a fixed $l \in \FF_p$. This is illustrated by the following diagram
\[
  \xymatrix{
    \PP^1 & \ar[l]_-{\rho} Y\ar[r]^-{V'} \ar[d]_{\pi'} \ar@{}[dr]|-\square & X \ar[d]^{\pi} \\
    & C' \ar[r]_-{V} & C.
  }
\]
Since $V'$ is {\'e}tale, \cite[Lemma~3.3.5]{PartI} implies that $Y$ admits a Frobenius lifting $(\wt Y, \wt F_Y)$ such that $\Delta_{\wt F_Y} = V'^* \Delta_{\wt F_X}$, where $\Delta_{\wt F_Y}$ and $\Delta_{\wt F_X}$ are the $\QQ$-divisors associated to $\wt F_Y$ and $\wt F_X$, respectively. In particular, $\Delta_{\wt F_Y}$ is $\FF_p$-invariant.

As $\Delta_{\wt F_Y} \sim_{\QQ} -K_{C'\times \PP^1} = -\rho^*K_{\PP^1}$, we have $\Delta_{\wt F_Y}=\rho^*T$ for some $\QQ$-divisor $T$ on $\PP^1$. Let $G$ be a fiber of $\pi$ over a general point $c \in C$ and $\Delta = \Delta_{\wt F_X}|_G$. Since $\Delta_{\wt F_Y}$ is $\FF_p$-invariant and it is a pullback from $\PP^1$, we get that $\Delta$ is invariant under the action of $\FF_p$ on $G \isom \PP^1$.

By Corollary~\ref{cor:all-properties-of-divisor}(c) and Remark~\ref{rem:fsplitdivisor-cartesian} applied to $\pi'$, we get that $\Delta$ is the associated $\QQ$-divisor of an $F$-splitting of $\PP^1$. This contradicts Lemma~\ref{lem:p1}.
\end{proof}

We needed the following two lemmas in the proof of the above proposition. 

\begin{lemma} \label{lem:p1} 
  There does not exist an $\FF_p$-invariant $F$-splitting of $\PP^1$, where $\FF_p$ acts on $\PP^1$ via translations, that is $(x : y) \mapsto (x + ly : y)$ for $l \in \FF_p$. 
\end{lemma}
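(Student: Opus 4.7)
My plan is to use the $\QQ$-divisor associated to $\sigma$ together with the constraints imposed by $\FF_p$-invariance to produce an explicit candidate polynomial, and then show directly that it fails the splitting condition.

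Assume for contradiction that $\sigma$ is an $\FF_p$-invariant $F$-splitting on $\PP^1$, and let $\Delta := \Delta_{\sigma}$ be the associated $\QQ$-divisor (Proposition~\ref{prop:FSPlit-DivPair}). It is effective, linearly equivalent to $-K_{\PP^1}$ (hence of degree $2$), has coefficients of the form $\frac{m}{p-1}$ with $m \in \{0,1,\ldots,p-1\}$ (using Remark~\ref{rem:coefficients-fsplit} for the upper bound), and is $\FF_p$-invariant. Since the $\FF_p$-action on $\PP^1$ has the unique fixed point $\infty = (1:0)$ and all other orbits of size $p$, write $\Delta = \frac{m}{p-1}[\infty] + \sum_i \frac{n_i}{p-1} O_i$ with $n_i \in \{0,\ldots,p-1\}$. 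Reducing the degree identity $m + p\sum n_i = 2(p-1)$ modulo $p$ forces $m \equiv -2 \pmod{p}$, so for $p \geq 3$ one is left with $m = p-2$ and $\sum n_i = 1$; hence $\Delta = \frac{p-2}{p-1}[\infty] + \frac{1}{p-1} O$ for exactly one $\FF_p$-orbit $O \subset \bb{A}^1(\bar k)$.

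The corresponding section $P \in H^0(\PP^1, \omega_{\PP^1}^{1-p}) = H^0(\PP^1, \cO(2(p-1)))$ is therefore, up to scalar,
\[
  P(x,y) \,=\, y^{p-2} \prod_{z \in O}(x - zy) \,=\, x^p y^{p-2} - x y^{2p-3} + c\, y^{2p-2}
\]
for some $c \in k$, using the identity $\prod_{l \in \FF_p}(X - ly) = X^p - X\,y^{p-1}$ after centering at a representative of $O$. To conclude I would translate the splitting condition into a condition on the coefficients of $P$ by computing $\sigma_P(1)$ locally on the affine chart $\bb{A}^1 = \Spec k[z]$ with $z = x/y$: via the direct sum decomposition $k[z] = \bigoplus_{i=0}^{p-1} z^i \cdot k[z^p]$ and the standard formula for the Cartier operator, $\sigma_P$ is a Frobenius splitting if and only if the coefficient of $x^{p-1}y^{p-1}$ in $P$ is equal to $1$. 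But the monomials $y^{2p-2}$, $x^p y^{p-2}$, $x y^{2p-3}$ appearing in the displayed formula have $y$-exponents $2p-2$, $p-2$, $2p-3$, none of which equals $p-1$ when $p \geq 3$, so this coefficient vanishes, a contradiction.

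The main technical obstacle is the identification of the splitting condition with the vanishing of a single coefficient of $P$; once this local Cartier-operator computation is in hand, the combinatorial classification of $\FF_p$-invariant $\QQ$-divisors and the explicit expansion of $P$ are elementary.
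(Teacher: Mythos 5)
Your argument is essentially the same as the paper's: you both use Proposition~\ref{prop:FSPlit-DivPair} and Remark~\ref{rem:coefficients-fsplit} to classify $\FF_p$-invariant effective $\QQ$-divisors of degree $2$ with coefficients $\leq 1$, land on the unique candidate $\Delta = \tfrac{p-2}{p-1}(\infty) + \tfrac{1}{p-1}O$, and then observe via the Artin--Schreier factorisation that the corresponding section of $\cO(2p-2)$ has no $x^{p-1}y^{p-1}$ term, contradicting the splitting criterion of \cite[Theorem~1.3.8]{BrionKumar}. The only difference is cosmetic: the paper cites \cite{cgs14,schwedesmith10} as the primary contradiction and offers this coefficient computation as an aside, whereas you run the coefficient computation in full.

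One substantive point worth flagging: you explicitly restrict to $p \geq 3$ in the final step, and this restriction is genuinely needed. For $p = 2$ the candidate section is $x(x-y)\,y^{p-2} = x^2 - xy$, whose coefficient at $x^{p-1}y^{p-1} = xy$ is $-1 \neq 0$, so the coefficient criterion is actually \emph{satisfied}; one checks directly that this $s$ is indeed an $\FF_2$-invariant $F$-splitting with $\Delta = (0)+(1)$. So both your argument and the paper's displayed one-line verification ("$x(x-y)\cdots(x-(p-1)y)y^{p-2}$ has coefficient zero at $x^{p-1}y^{p-1}$") break down at $p = 2$, and the lemma as stated appears to require an additional argument or a hypothesis $p > 2$ in that case. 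You should state the restriction $p \geq 3$ in the lemma itself (or handle $p=2$ separately) rather than leaving it implicit in the middle of the proof.
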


Note that an $F$-splitting is invariant under an action of a group if and only if the corresponding $\QQ$-divisor is invariant.

\begin{proof}
Assume by contradiction that such an $F$-splitting exists and let $\Delta$ be the corresponding $\QQ$-divisor (see Proposition~\ref{prop:FSPlit-DivPair}). By the definition of $\Delta$, we get that $(p-1)\Delta$ is an effective integral divisor and $\deg \Delta = 2$. Furthermore, the coefficients of $\Delta$ are at most one (see Remark~\ref{rem:coefficients-fsplit}). Since each orbit of the action of $\FF_p$ on $\PP^1$ is of length $p$ except for the one of the fixed point $\infty \in \PP^1$, the only $\QQ$-divisor satisfying the aforementioned properties is
\[
  \Delta = \left(1 - \frac{1}{p-1}\right)(\infty) + \sum_{i=0}^{p-1}\frac{1}{p-1}(x_i),
\]
where $x_i = x_0 + i \in \bb{A}^1(k)$. Up to an action of an automorphism we can assume that $x_0 = 0$. This yields a contradiction, because $\PP^1$ cannot be $F$-split with such an associated $\QQ$-divisor $\Delta$ by \cite[Example 3.4]{cgs14} and \cite[Proposition 5.3 (2)]{schwedesmith10}. One can check that the trace of an $F$-splitting of $\PP^1$ cannot be equal to $(p-1)\Delta$  directly by noticing that $x(x-y)\cdots(x-(p-1)y)y^{p-2}$ has coefficient zero at $x^{p-1}y^{p-1}$ (see \cite[Theorem 1.3.8]{BrionKumar}). 
\end{proof}

\begin{lemma} \label{lem:flat_ruled_surface} 
  Let $C$ be an ordinary elliptic curve, and let $C'$ be the Frobenius twist of $C$. Consider the action of $\FF_p$ on $C'\times \PP^1$ with $l \in \FF_p$ acting as
  \[
    (c, (x:y)) \longmapsto (c+l\alpha, (x+ly : y)),
  \]
  for $c \in C'$ and $(x : y) \in \PP^1$, where $\alpha \in C'[p]$ is a fixed $p$-torsion point. Let $X$ be the quotient $(C'\times\PP^1)/\FF_p$. Then $X \isom\PP_C(E)$ for $E$ being a non-split extension of $\cO_C$ with itself. 
\end{lemma}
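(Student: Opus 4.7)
The plan is to realize $X$ as a projective bundle $\PP_C(E)$ and then pin down $E$ by explicit equivariant descent, concluding with a global sections computation. Since $C$ is ordinary, the Verschiebung $V\colon C' \to C$ is a finite \'etale isogeny of degree $p$ whose kernel is the constant \'etale group scheme $\FF_p\subseteq C'$; by hypothesis $\alpha$ generates this kernel, so the $\FF_p$-action on $C'$ by translation is free with quotient $V$. Combined with the first projection, this realizes $\pi\colon X \to C$ as a Zariski locally trivial $\PP^1$-bundle, whence $X \isom \PP_C(E)$ for some rank two bundle $E$, a priori only defined up to a line bundle twist.

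To identify $E$ concretely, I would lift the $\FF_p$-action on $\PP^1$ to a linear action on the trivial bundle $\cO_{C'}^{\oplus 2}$ on $C'$: the element $l \in \FF_p$ acts on the base by translation by $l\alpha$ and on fibers by the unipotent matrix $M_l = \bigl(\begin{smallmatrix} 1 & l \\ 0 & 1 \end{smallmatrix}\bigr)$. This covers the original action on $C' \times \PP^1 = \PP(\cO_{C'}^{\oplus 2})$, so equivariant descent along $V$ produces a rank two bundle $E$ on $C$ with $V^*E \isom \cO_{C'}^{\oplus 2}$ and $\PP_C(E) = X$. The subline $k\cdot e_2 \subseteq \cO_{C'}^{\oplus 2}$ is $\FF_p$-invariant with trivial fiber action, and the quotient line is trivially acted upon as well; since the only character $\FF_p \to k^\times$ in characteristic $p$ is trivial, both descend to $\cO_C$, yielding
\[
  0 \ra \cO_C \ra E \ra \cO_C \ra 0.
\]

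The heart of the argument---and the only step beyond routine bookkeeping---is showing that this extension is non-split. For that I would compute $h^0(C,E)$ via \'etale Galois descent: one has $H^0(C,E) = H^0(C', V^*E)^{\FF_p} = (k^{\oplus 2})^{\FF_p}$ with $\FF_p$ acting by the matrices $M_l$, and a direct check shows that the invariants reduce to the span of $(0,1)$, so $h^0(C,E) = 1$. As $h^0(\cO_C^{\oplus 2}) = 2$, the bundle $E$ cannot be trivial, and therefore the extension above does not split. By Atiyah's classification of rank two bundles on an elliptic curve, this identifies $E$ as the unique indecomposable self-extension of $\cO_C$, which is exactly the asserted bundle.
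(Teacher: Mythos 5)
Your argument follows essentially the same path as the paper's proof: you identify the quotient map $C'\to C$ with the Verschiebung (étale of degree $p$ since $C$ is ordinary), lift the projective $\FF_p$-action to a linear one on the trivial rank-two bundle via the unipotent matrices $M_l=\bigl(\begin{smallmatrix}1&l\\0&1\end{smallmatrix}\bigr)$, and invoke equivariant descent to produce $E$ sitting in a self-extension of $\cO_C$. The paper asserts non-splitness by noting the upstairs splitting is not $\FF_p$-equivariant; you give a cleaner justification by computing $h^0(C,E)=1$ via Galois descent, which is the more careful way to finish. One small correction: with your matrix convention, $M_l$ fixes $e_1=(1,0)$ (since $M_l\begin{psmallmatrix}a\\b\end{psmallmatrix}=\begin{psmallmatrix}a+lb\\b\end{psmallmatrix}$), not $e_2=(0,1)$; so the invariant subline is $k\cdot e_1$ and the space of invariants is the span of $(1,0)$. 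The dimension count $h^0=1$ and everything downstream are unaffected.
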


\begin{proof}
By definition we have the following Cartesian diagram
\[
  \xymatrix{
    C' \times \PP^1 \ar[r]^-{V'} \ar[d]_{\pi'} & X \ar[d]^{\pi} \\
    C' \ar[r]_-{V} & C,
  }
\]
where $V$ is the quotient by $C'[p]$. Note that $C' \times \PP^1 = \PP_{C'}(E')$, where $E'$ is a vector bundle sitting inside a split (but not $\FF_p$-equivariantly so) extension of $\FF_p$-equivariant sheaves
\[
  0 \ra \cO_{C'} \ra E' \ra \cO_{C'} \ra 0,
\]
with $E' \to \cO_{C'}$ corresponding to the $\FF_p$-equivariant section $C' \times (1:0)$ of $\pi'$. 
Therefore, $X = \PP_C(E)$ for a vector bundle $E$ on $C$ such that $V^*E$ is $\FF_p$-equivariantly isomorphic to $E'$, and the above short exact sequence descends to $C$ making $E$ a non-split extension of $\cO_C$ by itself.
\end{proof}
Having handled rational and ruled surfaces, we are ready to proceed to the general case.

\begin{thm} \label{thm:c1-surfaces} 
  Let $X$ be a smooth projective surface over $k$. Then $X$ is $F$-liftable if and only if $X$ is 
  \begin{enumerate}[(i)]
   \item an ordinary abelian surface, 
   \item a hyperelliptic surface being a quotient of a product of two ordinary elliptic curves,
   \item a ruled surface $\PP_C(E)$ for a normalized rank two vector bundle $E$ over an ordinary elliptic curve $C$ except when $E$ is a non-trivial extension of $\cO_C$ with itself, or
   \item a toric surface.
  \end{enumerate}
\end{thm}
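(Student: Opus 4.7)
The plan is to split into backward and forward directions, with the forward direction organized by Kodaira dimension via the Enriques--Kodaira classification of surfaces.

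For the backward direction, each of the four classes reduces to an earlier result. Cases (i) and (iv) are Example~\ref{ex:flift}(a) and (c); (iii) is Proposition~\ref{prop:ruled_surface}. For (ii), I would write a hyperelliptic $X$ of the prescribed form as an \'etale quotient $(E_1\times E_2)/G$ with $G$ finite acting freely, note that $E_1\times E_2$ is $F$-liftable by Example~\ref{ex:flift}(a), and descend along $G$ via Theorem~\ref{thm:descending-frob-lift}(c), using that by Bagnera--De Franchis $G$ has small order and so (in the characteristics for which the surface exists) is linearly reductive.

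For the forward direction, I first note that any $F$-liftable $X$ is $F$-split by Proposition~\ref{prop:frobenius_cotangent_morphism}(b), so the associated effective $\QQ$-divisor satisfies $\Delta_{\wt F_X}\sim_\QQ -K_X$; in particular $-K_X$ is $\QQ$-effective. If $\kappa(X)\ge 1$, then $K_X$ would be $\QQ$-effective as well, and both conditions together would force $K_X\sim_\QQ 0$ on the projective surface, contradicting $\kappa(X)\ge 1$. So $\kappa(X)\in\{-\infty,0\}$. The case $\kappa(X)=-\infty$ splits into the rational case, handled by Proposition~\ref{prop:f-lifts_on_rational_surfaces}, and the irrationally ruled case. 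In the latter, there is a birational morphism $X\to X_{\min}=\PP_C(E)$ to the unique minimal model; iterating Theorem~\ref{thm:descending-frob-lift}(b.ii) yields a Frobenius lifting of $X_{\min}$, and Theorem~\ref{thm:descending-frob-lift}(b.i) applied to the smooth $\PP^1$-bundle $\pi\colon X_{\min}\to C$ yields one of $C$. Since an $F$-liftable smooth projective curve of positive genus must be an ordinary elliptic curve (classical, \cite{PartI}), we obtain $g(C)=1$ with $C$ ordinary, and Proposition~\ref{prop:ruled_surface} puts $X_{\min}$ in case (iii). To exclude non-trivial blow-ups, I would compute using Corollary~\ref{cor:all-properties-of-divisor}(c) that the vertical part $\Delta^v_{\wt F_{X_{\min}}} = \pi^*\Delta_{\wt F_C}$ vanishes (since $-K_C=0$ gives $\Delta_{\wt F_C}=0$), so $D_{\wt F_{X_{\min}}}$ is a disjoint union of smooth horizontal sections, and Lemma~\ref{lemma:frobenius_blow-ups} then forbids any blow-up of $X_{\min}$.

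The case $\kappa(X)=0$ invokes the classification: $X$ is K3, Enriques, abelian, or bielliptic. Abelian surfaces are $F$-liftable if and only if ordinary, by \cite{PartI}, giving (i). For bielliptic $X=A/G$ with free $G$-action, the \'etale cover $A\to X$ pulls the Frobenius lifting of $X$ back to one of $A$ (by \'etale functoriality, as in the proof of Proposition~\ref{prop:ruled_surface}), so $A$ is an ordinary abelian surface; Bagnera--De Franchis then forces $A\cong E_1\times E_2$ with ordinary $E_i$, giving (ii). The step I expect to be most delicate is the exclusion of K3 and Enriques surfaces: here I would verify that Bott vanishing, a necessary condition for $F$-liftability by \cite{BTLM}, fails on any projective K3 (e.g.\ by using Riemann--Roch to exhibit an ample $L$ with $h^1(X,\Omega^1_X\otimes L)\ne 0$) or appeal to the explicit treatment in \cite{PartI}; the Enriques case then reduces to K3 via the \'etale double cover for $p\neq 2$, with $p=2$ treated directly.
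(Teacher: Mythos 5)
Your overall structure (backward direction case-by-case, forward direction organized by Kodaira dimension) coincides with the paper's, and the $\kappa(X)=-\infty$ rational and ruled analyses follow the same route through Propositions~\ref{prop:f-lifts_on_rational_surfaces} and~\ref{prop:ruled_surface}. However there are several genuine gaps, the most serious of which is in exactly the step you flag as delicate.

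\textbf{The K3 exclusion via Bott vanishing is not viable.} Totaro \cite{totaro_bott_vanishing} (cited in the introduction of this paper) exhibits projective K3 surfaces that \emph{do} satisfy Bott vanishing; so Bott vanishing is a necessary but not a characterizing condition, and the plan ``exhibit an ample $L$ with $h^1(X,\Omega^1_X\otimes L)\ne 0$ on any K3'' cannot succeed in general. The paper circumvents this entirely: since $F$-liftable with $K_X$ torsion forces $X$ to be a quotient of an ordinary abelian surface by a free finite group action (\cite{MehtaSrinivas}), Hirzebruch--Riemann--Roch gives $\chi(X,\cO_X)=0$, which instantly rules out K3 ($\chi=2$) and Enriques ($\chi=1$). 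This $\chi$ argument is the key idea you are missing.

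\textbf{Enriques--Kodaira in characteristic 2 and 3.} Your list ``K3, Enriques, abelian, or bielliptic'' omits the quasi-hyperelliptic surfaces present in characteristics 2 and 3. The paper excludes them by noting they contain rational curves, and a nonconstant $\phi\colon\PP^1\to X$ gives an injection $T_{\PP^1}\to\phi^*T_X$, so $T_X$ cannot be \'etale trivializable. You would need a similar argument.

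\textbf{Ruled surfaces: excluding blow-ups.} Your claim that $D_{\wt F_{X_{\min}}}$ is a disjoint union of smooth horizontal \emph{sections} is unjustified: $\Delta^h\sim_\QQ-K_{X_{\min}/C}$ has fiber degree $2$, and nothing prevents $\lfloor\Delta^h\rfloor$ from being an irreducible bisection, or two sections that meet, destroying the snc hypothesis that Lemma~\ref{lemma:frobenius_blow-ups} requires. The paper's argument is more robust: if $Y=\Bl_x\PP_C(E)$ were $F$-liftable, then by Lemma~\ref{lem:negative_divisors} and Lemma~\ref{lem:fsplit-comp-with-d} one obtains an $F$-splitting of $Y$ compatible with the exceptional divisor, hence (pushing forward through $Y\to\PP_C(E)\to C$) an $F$-splitting of the elliptic curve $C$ compatible with a point, which is impossible.

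\textbf{A minor point on (ii).} Using Theorem~\ref{thm:descending-frob-lift}(c) for hyperelliptic surfaces requires $G$ to be linearly reductive, which fails if $p\mid |G|$; this does occur for some hyperelliptic surfaces in small characteristics. It is simpler to invoke Example~\ref{ex:flift}(b) (a finite \'etale cover by an ordinary abelian variety suffices), avoiding linear reductivity altogether.
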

In particular, Conjecture~\ref{conj:froblift} is true for surfaces.

\begin{proof}
If $X$ is $F$-liftable, then it is $F$-split and $\smash{\omega_X^{1-p}}$ is effective (see Proposition~\ref{prop:frobenius_cotangent_morphism}). In particular, we only need to consider the case of $\kappa(X) \leq 0$.  If $\kappa(X)=0$, then $K_X$ is \mbox{$\QQ$-effective}, and hence $\smash{\omega_X^{p-1}}$ is trivial and $X$ is minimal. In this case, the theorem follows from \cite[Theorem 1]{xin16}, but for the convenience of the reader we present a simplified argument.  

When $K_X$ is torsion, $X$ is $F$-liftable if and only if it is a quotient of an ordinary abelian surface by a free action of a finite group (see \cite[Theorem 2]{MehtaSrinivas}). By Hirzebruch--Riemann--Roch, such quotients satisfy $\chi(X,\cO_X)=0$, and so by the classification of surfaces they are abelian, hyperelliptic, or quasi-hyperelliptic. We can exclude the surfaces of the latter type as they contain rational curves (see \cite[\S 7]{LiedtkeSurfaces}): indeed, if $\phi \colon \PP^1 \to X$ is non-constant, then there exists an injection $T_{\PP^1} \to \phi^*T_X$, so $T_X$ cannot be \'etale trivializable (cf.\ \cite[Proposition 5]{xin16}). Hyperelliptic surfaces are \'etale quotients of products of elliptic curves $E_1 \times E_0$, except for case a3) which is an \'etale quotient of an abelian surface $A = E_1\times E_0/\mu_2$ (see \cite[p.\ 37]{BombieriMumford} for the notation). Therefore they are $F$-liftable if and only if $E_1$ and $E_0$ are ordinary, by \cite[Theorem 2]{MehtaSrinivas} and \cite[Lemma~3.3.5]{PartI}. Note that $A$ is ordinary if and only if $E_1 \times E_0$ is, as they are isogeneous to each other. This concludes our analysis of Kodaira dimension zero.

As of now, we assume $\kappa(X)=-\infty$. If $X$ is of type (iii) or (iv), then it is $F$-liftable by Example~\ref{ex:flift}(c) and Proposition~\ref{prop:ruled_surface}. To conclude, we assume that $X$ is $F$-liftable and show that $X$ is of type (iii) or (iv). If $X$ is rational, then it is toric by Proposition~\ref{prop:f-lifts_on_rational_surfaces}. Thus, we can assume that there exists a birational morphism $f \colon X \to \PP_C(E)$, where $\PP_C(E)$ is a~ruled surface over $C \not \isom \PP^1$. By Theorem~\ref{thm:descending-frob-lift}(b), the curve $C$ is $F$-liftable, and so it is an ordinary elliptic curve.

We claim that $X \isom \PP_C(E)$. Otherwise, $f$ factors through a monoidal transformation $\pi\colon\Bl_{x} \PP_C(E) \to \PP_C(E)$, where $x \in \PP_C(E)$. In particular, Theorem~\ref{thm:descending-frob-lift}(b) implies that $Y=\Bl_{x} \PP_C(E)$ is $F$-liftable, which is impossible. Indeed, if $Y$ is $F$-liftable, then so is $(Y, {\rm Exc}\, \pi)$ by Lemma~\ref{lem:negative_divisors}, and hence $Y$ is $F$-split compatibly with ${\rm Exc}\, \pi$ by Lemma~\ref{lem:fsplit-comp-with-d}. Therefore, we get an $F$-splitting on $C$ compatible with a point (see \cite[Lemma 1.1.8(ii)]{BrionKumar}), which is impossible as $C$ is an elliptic curve.

Since we know that $X$ is a ruled surfaces over an ordinary elliptic curve, the theorem follows from Proposition~\ref{prop:ruled_surface}.
\end{proof}

\begin{remark} \label{rem:xin} 
The $F$-liftability of minimal surfaces has been considered in \cite{xin16}. Note that our results do not agree when $\kappa(X)=-\infty$, as \cite[Theorem 1 (2b)]{xin16} claims that all ruled surfaces over ordinary elliptic curves are $F$-liftable. It seems to us that the gluing argument used at the end of the proof of Proposition~9 in \emph{op.cit.}\ is incomplete, as it is unclear why $h$ extends to a regular function over $V$.
\end{remark}


\section{Fano threefolds}
\label{s:fano3}

In this section, we will work assuming the following claim, which would follow from some assertions in the literature which we were unable to verify completely. See Appendix~A for a detailed discussion of the issue of boundedness for Fano threefolds.

\begin{assertion} \label{assn:fano3-bdd}
  There exists an integer $m>0$ such that for every Fano threefold $X$ over an algebraically closed field of arbitrary characteristic the divisor $-mK_X$ is base-point free.
\end{assertion}

\begin{thm} \label{thm:fano3}
  Assume Assertion~\ref{assn:fano3-bdd} holds true. Then there exists a $p_0$ such that for every prime $p\geq p_0$, every $F$-liftable Fano threefold over an algebraically closed field of characteristic $p$ is toric.
\end{thm}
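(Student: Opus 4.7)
The plan is to reduce the question to the Mori--Mukai classification of smooth Fano threefolds in characteristic zero. Given an $F$-liftable Fano threefold $X$ over $k$ of characteristic $p$, Assertion~\ref{assn:fano3-bdd} supplies a uniform integer $m$ such that $-mK_X$ is base-point free, bounding the Hilbert polynomial of $X$ with respect to $\cO_X(-mK_X)$. This places $X$ inside a bounded component of a Hilbert scheme and, via the mixed-characteristic Hilbert-scheme argument of Proposition~\ref{prop:fano-rigid-spec}, allows one to realize $X$ as the closed fiber of a smooth proper family $\mathcal{X}\to \Spec R$, where $R$ is a complete DVR of mixed characteristic $(0,p)$ and the geometric generic fiber $\mathcal{X}_{\bar\eta}$ is a smooth Fano threefold in characteristic zero. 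For $p\gg 0$ one then applies the Mori--Mukai classification to identify $\mathcal{X}_{\bar\eta}$ as a member of one of roughly a hundred deformation families.

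Next, I would proceed family by family, verifying that $F$-liftability of $X$ forces toricity. Most Mori--Mukai families arise as iterated blow-ups of visibly toric varieties (such as $\PP^3$, smooth quadrics $Q\subseteq \PP^4$, Hirzebruch threefolds, or products of projective spaces) along smooth centers. For such a presentation $\pi\colon X=\Bl_Z Y\to Y$, Proposition~\ref{prop:blow-ups} (Theorem~\ref{thmi:flift-comp}) reduces $F$-liftability of $X$ to $F$-liftability of $Y$ together with compatibility of the Frobenius lifting of $Y$ with the lifted center $\wt Z$. Combining Lemma~\ref{lem:comp_f-liftable_comp_f-split} (Frobenius compatibility implies compatibility with the associated $F$-splitting) with Corollary~\ref{cor:frobenius_subscheme_containment} (finiteness of compatibly split subschemes) then forces $Z$ to lie among finitely many possibilities, which in the toric cases are exactly toric subvarieties; iterating shows that $X$ itself is toric.

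The main obstacle is the handful of families whose members do not admit such an iterated-blow-up description from a toric base along toric centers. The representative hard case, singled out in the introduction, is the blow-up of $\PP^3_k$ along a twisted cubic: the center is not a toric subvariety of $\PP^3$, so the blow-up argument fails outright. Here my approach would be to exploit the relative $F$-splitting machinery of Section~\ref{sec:f_liftings_and_splittings_relative}. A natural fibration structure $X\to B$ (e.g.\ the projection to $\PP^1$ whose general fiber is a quadric surface in the twisted-cubic case) inherits a relative Frobenius lifting, hence a relative $F$-splitting, and Corollary~\ref{cor:all-properties-of-divisor} imposes strong constraints on the horizontal and vertical parts of $\Delta_{\wt F_X}$. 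A careful analysis of these $\QQ$-divisors on the fibers—using Proposition~\ref{prop:FSPlit-DivPair}, Remark~\ref{rem:coefficients-fsplit}, and Corollary~\ref{cor:froblift-speck}(b) to force $F$-liftability of each fiber—together with Bott vanishing~\eqref{eqn:bott-van}, either contradicts $F$-liftability of $X$ or reveals an invariant toric structure.

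Finally, I need to produce the uniform lower bound $p_0$. Each Mori--Mukai family contributes only finitely many bad primes (those below a bound determined by Assertion~\ref{assn:fano3-bdd}, by the orders of automorphism groups of centers, and by residual-characteristic issues in the Hilbert-scheme descent); since there are finitely many families, taking the maximum yields $p_0$. The descent step itself is the delicate point: having established that $\mathcal{X}_{\bar\eta}$ is toric in characteristic zero, one uses Proposition~\ref{prop:fano-rigid-spec} (rigidity of the Hilbert scheme of torus actions on Fano threefolds) to spread the torus action down to the special fiber $X$, and Corollary~\ref{cor:unique-f-lift} to ensure that the $F$-lifting data are compatible with this toric structure. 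Combining these gives the toricity of $X$ for every $p\geq p_0$.
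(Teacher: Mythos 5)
Your overall strategy has the right shape—lift to characteristic zero, invoke Mori--Mukai, handle the families via the blow-up and Frobenius-splitting machinery, descend back via a Hilbert-scheme rigidity argument—but several of the mechanisms you describe are not what the paper does, and at least one step would not work as stated.

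First, the logical direction of the descent is reversed. You propose to show that $\mathcal{X}_{\bar\eta}$ is toric in characteristic zero and then ``spread the torus action down'' using Proposition~\ref{prop:fano-rigid-spec}. That proposition does nothing of the sort: it is a uniqueness-of-specialization statement (two smooth proper families over a trait with rigid Fano special fibers and isomorphic geometric generic fibers have isomorphic special fibers), proved via constancy of $\pi_0$ of a Hilbert scheme under specialization (Lemma~\ref{lemma:connected_components}). There is no ``Hilbert scheme of torus actions'' in the argument. The paper instead argues by contradiction: if $X$ is not toric, rigidity forces $\wt X_F$ to be one of finitely many non-toric rigid complex Fano threefolds $X_1,\dots,X_r$, each equipped (by Theorem~\ref{thm:mori-mukai}) with a fibration $\pi_i\colon X_i\to Y_i$; Proposition~\ref{prop:fano-rigid-spec} then identifies $X$ with $\mathscr{X}_i\otimes k$; descending $F$-liftability to $\mathscr{Y}_i\otimes k$ via Theorem~\ref{thm:descending-frob-lift}(b) and Proposition~\ref{prop:relative_Kodaira_Fsplit} and invoking Theorem~\ref{thm:fano3-list} gives the contradiction. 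There is never a need to prove toricity of the generic fiber or to descend a torus action.

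Second, your treatment of the iterated-blow-up families has a genuine gap. You invoke Corollary~\ref{cor:frobenius_subscheme_containment} (finiteness of compatibly split subschemes) to conclude that the blow-up centers ``are exactly toric subvarieties.'' Finiteness alone does not give toricity of the compatible subschemes; the corollary says nothing about which subschemes are compatible. The paper does not argue this way at all. For the product cases (a-1), (a-2) it uses Proposition~\ref{prop:compatible_with_frobenius_on_products_preliminary} to show a compatible center in a product must itself be a product, which the tridiagonal cubic (and the other listed curves) is not. For the (b) cases it uses failure of Bott vanishing. For (c) it restricts to the strict transform of a plane and reduces to the non-$F$-liftability of $(\PP^2,\text{conic})$ via a direct splitting-type computation on $\Omega^1_{\PP^2}(\log D)|_C$, not Bott vanishing.

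Third, your sketch of the twisted-cubic case uses the wrong fibration. The paper uses the $\PP^1$-bundle structure $Y=\PP_{\PP^2}(\cE)\to\PP^2$ coming from the pencil of quadrics through $C$, and shows $-(K_{Y/\PP^2}+E)$ is not $\QQ$-effective, contradicting Corollary~\ref{cor:all-properties-of-divisor}(a),(c). Your suggestion of analyzing the $\QQ$-divisor on fibers of a projection to $\PP^1$ does not obviously lead to a contradiction, and Bott vanishing plays no role here.

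In short: the high-level outline is reasonable, but the key mechanisms you cite (finiteness $\Rightarrow$ toric centers, spreading out a torus action, Bott vanishing for the twisted cubic) are either incorrect or misplaced, and the actual contradiction in the paper flows through Theorem~\ref{thm:fano3-list}, which you would still need to prove case by case.
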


\begin{remark} \label{rmk:fano3}
\begin{enumerate}[(a)]
  \item Our proof of Theorem~\ref{thm:fano3} is heavily based on the Mori--Mukai classification of Fano threefolds, known only in characteristic zero. Note that $F$-liftable smooth Fano varieties in positive characteristic are rigid and admit a~unique lifting to characteristic zero, since by Bott vanishing \eqref{eqn:bott-van} we have
  \[
  H^i(X, T_X) = H^i(X,\Omega^{n-1}_{X} \otimes \omega_{X}^{\vee}) = 0 \quad \text{for }i>0.  \qedhere
  \]
 
  The main reference for the Mori--Mukai classification used in this section is \cite[Tables §12.3-§12.6]{shafarevich}. We denote a Fano threefold of Picard rank $\rho$ and whose number in the tables is $n$ as M--M $\rho.n$ (for example, M--M 2.12). 

  \medskip
  
  \item In the proof, we only need Assertion~\ref{assn:fano3-bdd} to hold in characteristic $p \gg 0$, and only for $F$-liftable (and hence rigid, liftable to characteristic zero, and $F$-split) Fano threefolds. Without assuming Assertion~\ref{assn:fano3-bdd}, our proof shows that every $F$-liftable Fano threefold described by the Mori--Mukai classification is toric. It also shows that the assertion of Theorem~\ref{thm:fano3} holds (with a smaller, more explicit bound on $p_0$) if one could prove the following result.

  \medskip

  \noindent {\bf Claim.} \emph{Let $R$ be a complete discrete valuation ring and let $X, Y/R$ be smooth and proper. Suppose that both special fibers $X_0, Y_0$ are rigid Fano threefolds, and that the generic fibers $X_\eta, Y_\eta$ are isomorphic. Then $X_0$ and $Y_0$ are isomorphic.}

  \medskip

  \noindent The above statement holds trivially if $R$ contains a field, which makes it quite intriguing.

  \medskip

  \item In fact in Proposition~\ref{prop:fano-rigid-spec}, proven using boundedness statement coming from Assertion~\ref{assn:fano3-bdd}, we prove that there exists a prime number $p_0$ such every Fano variety in characteristic $p>p_0$ arises as a reduction of a characteristic zero model coming from Mori--Mukai classification.  This $p_0$ is exactly the necessary bound in Theorem~\ref{thm:fano3}.

  \medskip

  \item Combined with \cite[Theorem~3]{PartI}, Theorems~\ref{thm:c1-surfaces} and \ref{thm:fano3} show that the conjecture of Occhetta and Wiśniewski \cite{occhetta_wisniewski} (see \cite[Conjecture~2]{PartI}) holds in characteristic zero if the target $X$ is a surface or a Fano threefold, with no additional assumptions.
\end{enumerate}
\end{remark}

The following theorem summarizes everything we need to know about the classification.

\begin{thm} \label{thm:mori-mukai}
  Let $X$ be a smooth complex Fano threefold. Then either $X$ is toric, or there exists a fibration $\pi\colon X\to Y$ to a smooth Fano variety $Y$, where $Y$ is either non-rigid or one of the following varieties:
  \begin{enumerate}[(a)]

    \item[(a-1)] A smooth divisor of tri-degree $(1,1,1)$ in $\PP^1\times \PP^1\times \PP^2$ \hfill (M--M 3.17)

    \item[(a-2)] The blow-up $Y = \Bl_W Z$ where

      \begin{center}
      \begin{tabular}{r l r l r}
        $Z=$ \hspace{-1em} & $\PP^1 \times \PP^2$ & $W=$ \hspace{-1em} & a smooth curve of bi-degree $(2, 1)$ & \hphantom{alabababababa} (M--M 3.21) \\
        & $\PP^1 \times \PP^1\times \PP^1$ & &  a smooth curve of tri-degree $(1,1,2)$ & (M--M 4.3) \\
        & $\PP^1 \times \PP^1 \times \PP^1$ & & tri-diagonal curve & (M--M 4.6) \\
        & $\PP^1\times \PP^1 \times \PP^1$ & & a smooth curve of tri-degree $(0,1,1)$ & (M--M 4.8)
      \end{tabular}
      \end{center}

    \medskip

    \item[(b-1)] $Y = \Gr(2,5) \cap \PP^6$, where $\Gr(2,5) \subseteq \PP^9$ is the Grassmannian of planes in $\PP^5$ embedded in $\PP^9$ by the  Pl\"ucker embedding  and $\PP^6 \subseteq \PP^9$ is a general linear subspace. \hfill (M--M 1.15)

    \item[(b-2)] A smooth quadric in $\PP^4$. \hfill  (M--M 1.16)

    \item[(b-3)] A smooth divisor of bi-degree $(1,1)$ in $\PP^2\times \PP^2$. \hfill (M--M 2.32)

    \medskip
    \item[(c-1)] The blow-up of a smooth planar conic in $\PP^3$. \hfill (M--M 2.30)

    \item[(c-2)] The blow-up of a smooth conic contained in $\{t\}\times \PP^2$ in $\PP^1\times \PP^2$. \hfill (M--M 3.22)

    \medskip
    \item[(d)] The blow-up of the twisted cubic in $\PP^3$. \hfill (M--M 2.27)

    \medskip
    \item[(e)] A non-toric del Pezzo surface. 
  \end{enumerate}
\end{thm}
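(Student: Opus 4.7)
The argument is a case-by-case inspection of the Mori--Mukai classification of smooth complex Fano threefolds, which enumerates approximately one hundred deformation families across Picard ranks $\rho = 1, \ldots, 10$ in \cite[Tables §12.3--§12.6]{shafarevich}. The plan is to examine each family and either recognize it as toric or exhibit a fibration $\pi \colon X \to Y$ with target among the prescribed class of varieties.

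First I would record the toric smooth Fano threefolds: these form a known finite list of eighteen families (enumerated by their reflexive polytopes), and for them the first alternative of the theorem holds. For the remaining families, the key observation is that the notion of fibration is permissive enough to include the identity $X \to X$. Hence whenever a non-toric $X$ is itself non-rigid, or itself appears on the list (a-1)--(d), one may simply take $\pi = \mathrm{id}_X$. The vast majority of Mori--Mukai families of low Picard rank are non-rigid, because their moduli arise from the choice of embedded subvarieties (curves in projective spaces, hypersurfaces of fixed degrees, etc.) or from continuous parameters in defining equations; this step disposes of most remaining cases.

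For the handful of families that are rigid, non-toric, and not already on the list, I would exploit the Mori cone: since $\rho(X) \geq 2$, there are at least two extremal contractions. Consulting the tables, each such $X$ admits at least one Mori contraction $\pi \colon X \to Y$ whose target $Y$ is either a non-rigid smooth Fano variety (handled automatically), or a non-toric del Pezzo surface (covered by~(e)), or one of the sporadic threefolds (a-1)--(d). Typical situations are divisorial contractions onto Fano threefolds with moduli (e.g., blow-ups of smooth curves in $\PP^3$ of sufficiently large degree) and conic or del Pezzo fibrations onto non-toric surfaces.

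The main obstacle is not conceptual but combinatorial: one has to verify the assertion for every entry of \cite[Tables §12.3--§12.6]{shafarevich}, a finite check reducing to (i) determining whether a given family is toric or non-rigid, and (ii) for the rigid non-toric remainders, extracting an extremal contraction with an acceptable target. The list (a-1)--(e) is calibrated precisely so that this verification terminates; the genuinely sporadic rigid Fano threefolds appearing in (b-1)--(b-3) and (d) are the cases that cannot be reduced further and must be accepted as the base of the recursion. One subtle point is to confirm, for each family listed under (a-2), (c-1), (c-2), and (d), that no alternative Mori contraction lands in a strictly simpler target — otherwise the family could be removed from the list, which would in turn shorten the remaining case analysis.
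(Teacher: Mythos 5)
Your proposal follows the same overall strategy as the paper's proof: a case-by-case inspection of the Mori--Mukai tables, partitioning families into toric, non-rigid, and the remaining rigid non-toric ones, with the convention that the identity counts as a fibration. The essential difference is that you replace the paper's concrete rigidity test by an informal heuristic. The paper computes, via Hirzebruch--Riemann--Roch,
\[
\chi(X, T_X) \;=\; h^0(X, T_X) - h^1(X, T_X) \;=\; \tfrac{1}{2}(-K_X)^3 - 18 + \rho(X) - \tfrac{1}{2} b_3(X),
\]
so that non-rigidity can be read off directly from the degree, Picard rank, and middle Betti number recorded in the tables: whenever the expression is negative, $h^1(X, T_X) > 0$. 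Your phrase ``moduli arise from the choice of embedded subvarieties or continuous parameters'' is not a criterion one can apply uniformly to a table entry; it would need to be made precise in each of the roughly one hundred families, which is exactly what the formula spares you.

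There is also a genuine subtlety you do not flag: the sign test above is not an \emph{if and only if}. For M--M~2.28 (the blow-up of $\PP^3$ along a planar cubic) $\chi(X, T_X)$ is nonnegative, yet $X$ is non-rigid. The paper handles this exception by hand, via $\Def_{\Bl_C \PP^3} \simeq \Def_{\PP^3, C}$ (Lemma~\ref{lem:liedtke_satriano_blowup}) and the non-constancy of the forgetful transformation $\Def_{\PP^3, C} \to \Def_C$. Any version of the argument that reduces the list to the stated (a-1)--(e) must identify and treat this case, or the list would erroneously have to include 2.28. Finally, your closing remark about confirming that no ``alternative Mori contraction lands in a strictly simpler target'' addresses minimality of the list rather than correctness of the theorem, which remains true if the list is merely large enough; so that step is optional. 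With the HRR computation and the 2.28 exception added, your outline coincides with the paper's proof.
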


\begin{proof}
The only necessary information not immediately available from looking at the tables in \cite[Tables §12.3-§12.6]{shafarevich} is which Fano threefolds are rigid. To this end, we will use the Hirzebruch--Riemann--Roch formula 
\begin{align*} 
  \chi(X, E) &= \frac{1}{24}{\rm rk}(E) c_1(T_X)c_2(T_X) + \frac{1}{12}c_1(E)\left(c_1(T_X)^2 + c_2(T_X)\right) \\
  & + \frac{1}{4} c_1(T_X)\left(c_1(E)^2 - 2c_2(E)\right) + \frac{1}{6}\left(c_1(E)^3 - 3c_1(E)c_2(E) + 3c_3(E)\right).
\end{align*}
In particular, $\chi(X, \cO_X)=c_1(T_X)c_2(T_X)/24$; on the other hand, $\chi(X, \cO_X)=1$ by Kodaira vanishing, so $c_1(T_X)c_2(T_X)=24$. The number $c_3(T_X)$ equals the topological Euler characteristic, $2 + 2\rho(X) - b_3(X)$. We deduce the following formula
\[
  \chi(X, T_X) = h^0(X, T_X) - h^1(X, T_X) = \frac{1}{2}(-K_X)^3 - 18 + \rho(X) - \frac{1}{2}b_3(X),
\] 
where the values of the invariants on the right hand side are listed in \emph{loc.\ cit.} Whenever this expression is negative, the corresponding Fano threefold is not rigid. This applies to threefolds (M--M 1.1--14, 2.1--25, 3.1--12, and 4.1--2). 

The converse statement fails in one example which we check by hand. The threefold (M--M 2.28) is the blow-up of $\PP^3$ along a plane cubic $C \subseteq \PP^3$. By Lemma~\ref{lem:liedtke_satriano_blowup}, $\smash{\Def_{\Bl_C \PP^3}\isom \Def_{\PP^3, C}}$. Further, the forgetful transformation $\smash{\Def_{\PP^3, C}\to \Def_C}$ is non-constant. We conclude that $\Bl_C \PP^3$ is not rigid. 

With this at hand, the assertions now follow from \cite[Tables §12.3-§12.6]{shafarevich}. \emph{N.B.} The toric Fano threefolds are 2.33--2.36, 3.25--3.31, 4.9--4.12, 5.2, 5.3 are toric (cf.\ \cite[Ch. 12, Remarks (i), p.\ 216]{shafarevich}, note that toric varieties 3.25 and 4.12 are missing from this~list).
\end{proof}

\begin{lemma} \label{lemma:connected_components}
  Let $\pi \colon H \to S$ be a morphism of finite type.  Then there exists a dense open subset $U \subseteq S$ such that if $T$ is a trait with geometric generic (resp.\ closed) point $\bar\eta$ (resp.\ $\bar k$) with a morphism $T\to U$, and if $x_1, x_2\in H(T) = \Hom_S(T, H)$ are points whose images in $H(\bar\eta)$ lie in the same connected component of $H_{\bar\eta}$, then their images in $H(\bar k)$ lie in the same connected component of $H_{\bar k}$.
\end{lemma}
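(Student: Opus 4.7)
The plan is to produce, on a dense open $U \subseteq S$, a ``relative $\pi_0$'' factorization $H_U \xrightarrow{p} C \xrightarrow{q} U$ in which $q$ is finite \'etale and, for every geometric point $\bar s \to U$, the fibres of $p_{\bar s}$ are exactly the connected components of $H_{\bar s}$ (so that $C_{\bar s}$ is identified with $\pi_0(H_{\bar s})$). Granting this, the statement of the lemma follows formally: the compositions $p \circ x_i \colon T \to C_T := C \times_U T$ are two sections of the finite \'etale morphism $C_T \to T$ which, as I argue below, must coincide on all of $T$.

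To construct the cover $C$, I would first reduce to the case where $S$ is Noetherian and integral, compactify $H \hookrightarrow \bar H \to S$ via Nagata, and apply Stein factorization to the proper morphism $\bar H \to S$. This yields a factorization $\bar H \to \bar H' \to S$ with $\bar H \to \bar H'$ having geometrically connected fibres and $\bar H' \to S$ finite; on a dense open of $S$ the latter map becomes finite \'etale. The composition $H \hookrightarrow \bar H \to \bar H'$ is not quite the map we want, because a given connected component of $\bar H_{\bar s}$ may meet $H$ in a disconnected subscheme. However, the function $\bar s \mapsto \#\pi_0(H_{\bar s})$ is constructible (EGA~IV.9.7.8), so after restricting to a dense open on which it is constant and \'etale-refining the cover $\bar H' \to U$ accordingly, we obtain the desired $p$ and $q$. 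This refinement step is the main technical obstacle.

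Assuming the construction of $C$, the rest of the argument is formal. Let $T \to U$ be a trait with geometric generic point $\bar\eta$ and geometric closed point $\bar k$, and let $x_1, x_2 \in H(T)$. By the defining property of $p$, the hypothesis that $x_1(\bar\eta)$ and $x_2(\bar\eta)$ lie in the same connected component of $H_{\bar\eta}$ translates into the equality $p(x_1(\bar\eta)) = p(x_2(\bar\eta))$ in $C(\bar\eta)$. Since $\bar\eta \to \eta$ is faithfully flat, fpqc descent of morphisms shows that the two sections $p \circ x_1, p \circ x_2 \colon T \to C_T$ already agree over the generic point $\eta$ of $T$. Because $C_T \to T$ is separated and \'etale, its diagonal $C_T \hookrightarrow C_T \times_T C_T$ is open and closed, so the locus in the connected scheme $T$ on which the two sections coincide is a non-empty clopen subset, hence all of $T$. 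Specialising to the closed point yields $p(x_1(\bar k)) = p(x_2(\bar k))$, which by construction of $p$ means that $x_1(\bar k)$ and $x_2(\bar k)$ lie in the same connected component of $H_{\bar k}$.
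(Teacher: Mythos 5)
Your approach is genuinely different from the paper's, and the formal second half of your argument is sound, but there is a real gap in the first half that you yourself flag and do not close.

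The paper argues cohomologically: it fixes a prime $\ell$, invokes \cite[Th\'eor\`eme~1.9]{DeligneThFinitude} to find a dense open $U$ over which $\pi_*\FF_\ell$ is locally constant with formation commuting with base change, and then observes that on such a $U$ the cospecialization map $(\pi_*\FF_\ell)_{\bar\eta}\to(\pi_*\FF_\ell)_{\bar k}$ is an isomorphism, identifying $\FF_\ell^{\pi_0(H_{\bar\eta})}$ with $\FF_\ell^{\pi_0(H_{\bar k})}$ in a way compatible with evaluation at $T$-points. This is short precisely because all the geometric content is outsourced to the finiteness theorems in \'etale cohomology. Your approach instead tries to build the geometric object that $\pi_*\FF_\ell$ is secretly the sheaf of functions on, namely a finite \'etale $U$-scheme $C$ with a $U$-map $p\colon H_U\to C$ realizing $\pi_0$ fibrewise. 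Once $C$ exists, your descent-plus-clopen-diagonal argument for the two sections $p\circ x_i\colon T\to C_T$ is correct and clean.

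The gap is exactly where you say it is: Stein factorization of a Nagata compactification $\bar H\to S$ produces the fibrewise $\pi_0$ of $\bar H$, not of the open $H$, and you give no mechanism for passing from $\bar H'$ to the $C$ you actually need. ``\'Etale-refining $\bar H'\to U$ accordingly'' is not a construction: an arbitrary finite \'etale cover with the right fibre cardinality need not receive any map from $H$ at all. This can be repaired, but it takes real work: one should build $C$ directly rather than by modifying $\bar H'$, e.g.\ take the generic point $\eta_0$ of (an irreducible component of) $S$, note that $\pi_0(H_{\overline{\eta_0}})$ with its Galois action is a finite \'etale $\kappa(\eta_0)$-algebra $C_{\eta_0}$ receiving a canonical map from $H_{\eta_0}$, spread out both $C_{\eta_0}$ and the map $H_{\eta_0}\to C_{\eta_0}$ over a dense open $U$, and then use constructibility (EGA~IV.9.7.7--9.7.8: the loci where $p$ is surjective and has geometrically connected fibres are constructible on the target) to shrink $U$ further so that $p\colon H_U\to C$ is the relative $\pi_0$ fibrewise. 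None of this is in your write-up. So as it stands the proposal is an outline of a viable alternative argument, not a proof; the paper's sheaf-theoretic route avoids this entire construction and is considerably less delicate.
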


\begin{proof}
By considering $S\times \Spec \ZZ[1/\ell]$ for every prime $\ell$ separately, we may assume that there exists an $\ell$ invertible on $S$. By constructibility of higher direct images \cite[Th\'eor\`eme~1.9]{DeligneThFinitude} there exists a dense open $U\subseteq S$ such that the \'etale sheaves $\pi_* \FF_\ell$ are locally constant, with formation commuting with base change. Then for $T\to U$ as in the statement, the cospecialization map
\[ 
  \FF_\ell^{\pi_0(H_{\bar\eta})} = H^0(H_{\bar\eta}, \FF_\ell) = (\pi_* \FF_\ell)_{\bar\eta}
  \ra
  (\pi_* \FF_\ell)_{\bar k} = H^0(H_{\bar k}, \FF_\ell) = \FF_\ell^{\pi_0(H_{\bar k})}
\]
is an isomorphism. The required assertion follows.
\end{proof}

\begin{prop} \label{prop:fano-rigid-spec}
Assume Assertion~\ref{assn:fano3-bdd} holds true. There exists a $p_0$ such that for every trait $T$ with residue characteristic $p\geq p_0$, and every two smooth and proper $X, Y$ over $T$ such that
\begin{enumerate}
  \item the geometric special fibers $X_{\bar k}$ and $Y_{\bar k}$ are $F$-split Fano threefolds,
  \item moreover, they are rigid, i.e.\ $H^1(X_{\bar k}, T_{X_{\bar k}}) = 0 = H^1(Y_{\bar k}, T_{Y_{\bar k}})$, 
  \item the geometric generic fibers $X_{\overline{\eta}}$ and $Y_{\overline{\eta}}$ are isomorphic,
\end{enumerate}
the geometric special fibers $X_{\bar k}$ and $Y_{\bar k}$ are isomorphic as well.
\end{prop}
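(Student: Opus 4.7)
The plan is to parametrize rigid, $F$-split Fano threefolds (together with an anticanonical projective embedding) by an open subscheme $U$ of a Hilbert scheme of finite type over $\Spec\ZZ$, and then apply Lemma~\ref{lemma:connected_components} to the structural morphism $U\to \Spec\ZZ$ in order to transport the isomorphism type from the geometric generic fiber of $T$ to the geometric special fiber.

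First I would use Assertion~\ref{assn:fano3-bdd}, together with standard uniform projective normality and boundedness statements, to find a single integer $m$ such that for every $F$-split rigid Fano threefold $X$ in arbitrary characteristic the divisor $-mK_X$ is very ample and defines a projectively normal closed embedding $X\hookrightarrow\PP^N$ with $N$ uniformly bounded. Since the special fibers are $F$-split, Kodaira-type vanishing forces $h^0(X,\cO_X(-tmK_X))=\chi(X,\cO_X(-tmK_X))$, so by flatness $X$ and $Y$ share their Hilbert polynomial and only finitely many polynomials arise. I let $U\subseteq \mathrm{Hilb}_{\PP^N_{\ZZ}}$ be the disjoint union, over these Hilbert polynomials, of the open subscheme parametrizing smooth, $F$-split, rigid Fano threefolds embedded by $|-mK|$; this $U$ is of finite type over $\Spec\ZZ$ and carries a natural action of $G=\mathrm{PGL}_{N+1,\ZZ}$ whose orbits correspond to isomorphism classes of abstract Fano threefolds.

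The crucial intermediate step is to prove that the $G$-orbits on $U$ are \emph{open}. At a point $[X]\in U$ the tangent space of $U$ is $H^0(X,N_{X/\PP^N})$, and the differential of the orbit map $g\mapsto g\cdot [X]$ factors as
\[
\mathfrak{pgl}_{N+1}=H^0(\PP^N,T_{\PP^N})\longrightarrow H^0(X,T_{\PP^N}|_X)\longrightarrow H^0(X,N_{X/\PP^N}).
\]
The second arrow is surjective by rigidity $H^1(X,T_X)=0$; the first is surjective provided $H^1(\PP^N,T_{\PP^N}\otimes\cI_X)=0$, which can be arranged uniformly by increasing $m$. Hence each orbit is open, so for any field $k$ the connected components of $U_k$ are exactly the $G_k$-orbits, i.e.\ the isomorphism classes of the Fano threefolds parametrized. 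Then, after a harmless finite base change of $T$ to trivialize $\pi_*\cO_X(-mK_{X/T})$ and $\pi_*\cO_Y(-mK_{Y/T})$, I obtain two sections $x_1=[X]$ and $x_2=[Y]$ in $U(T)$. By hypothesis, $x_1(\bar\eta)$ and $x_2(\bar\eta)$ lie in the same $G_{\bar\eta}$-orbit, and therefore in the same connected component of $U_{\bar\eta}$. Lemma~\ref{lemma:connected_components} applied to $U\to \Spec\ZZ$ produces a dense open $V\subseteq \Spec\ZZ$ with the required cospecialization property; taking $p_0$ larger than all primes in $\Spec\ZZ\setminus V$ ensures that for $p\geq p_0$ the morphism $T\to \Spec\ZZ$ factors through $V$, so $x_1(\bar k)$ and $x_2(\bar k)$ land in the same connected component of $U_{\bar k}$, and hence in the same $G_{\bar k}$-orbit, giving $X_{\bar k}\isom Y_{\bar k}$.

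The main obstacle will be the uniform embedding step: pinning down a single $m$ that simultaneously yields very-ampleness, projective normality, and the vanishing needed for the orbit differential to be surjective, for every $F$-split rigid Fano threefold in arbitrary characteristic. Ultimately this boundedness is supplied by Assertion~\ref{assn:fano3-bdd} together with the uniqueness of a characteristic-zero lift of a rigid $F$-split Fano threefold (so only the finitely many Mori--Mukai families contribute), but converting this into concrete uniform bounds on $N$ and on the cohomological vanishings that drive the openness-of-orbits argument is where the proof requires the most care.
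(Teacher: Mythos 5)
Your proposal is correct and follows essentially the same route as the paper: uniform anticanonical embedding into a fixed projective space, parametrization by an open subscheme of a Hilbert scheme over $\ZZ$, Lemma~\ref{lemma:connected_components} to cospecialize ``same connected component,'' and openness of the $\mathrm{PGL}$-orbits (coming from rigidity) to upgrade this to an isomorphism of special fibers. You spell out orbit-openness via surjectivity of $\mathfrak{pgl}_{N+1}\to H^0(N_{X/\PP^N})$ at the point where the paper is terse; note however that invoking full projective normality is more than you need, since $H^1(\cI_X(1))=0$ (linear normality, automatic because the embedding is by the complete linear system $|-mK_X|$ with vanishing $H^1(\cO_X(1))$) together with $H^2(\cI_X)=0$ (equivalently $H^1(\cO_X)=0$, again Kodaira-type vanishing) already kill $H^1(T_{\PP^N}\otimes\cI_X)$ via the Euler sequence, so there is no need to increase $m$ beyond what Theorem~\ref{thm:boundedness} provides.
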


\begin{proof}
Let $m$ be as in Theorem~\ref{thm:boundedness}. We claim that  the $m$-th power of the anticanonical bundle is relatively very ample and yields embeddings $X, Y\hookrightarrow \PP^{N-1}_T$ with Hilbert polynomial $\chi$. Indeed, since $X_{\bar{k}}$ is an $F$-split Fano variety, $H^i(X_{\bar k}, \mathcal{O}_{X_{\bar k}}(-mK_{X_{\bar{k}}}))=0$ for $i>0$. Therefore, the restriction $H^0(X, \mathcal{O}_X(-mK_{X/T})) \to H^0(X_{\bar k}, \mathcal{O}_{X_{\bar k}}(-mK_{X_{\bar{k}}}))$ is surjective (for example by Grauert's and semicontinuity theorems), and the very ampleness of $-mK_{X_{\bar{k}}}$ implies the very ampleness of $-mK_{X/T}$. The analogous argument works for $Y$. We note that both the Hilbert polynomial and the integer $N$ are equal for the two families using condition (3), semicontinuity and Kodaira vanishing valid for $F$-split varieties.


Let $H \subseteq \mathrm{Hilb}_\chi(\PP^{N-1}_\ZZ)$ be the open subscheme parametrizing smooth three-dimensional subschemes of $\PP^{N-1}$ with ample anticanonical bundle and vanishing higher cohomology of the tangent bundle. By (1) and Akizuki--Nakano vanishing (true by (2) for $p>2$), $X$ and $Y$ give two points $[X], [Y] \in H(T)$. Assumption (3) implies that their images in $H(\bar\eta)$ lie in the same connected component (in fact, the same orbit of $\mathrm{PGL}_{N}$). Applying Lemma~\ref{lemma:connected_components} to $H\to \Spec \ZZ$, we get that if $p\gg 0$ (with bound independent of $X$ and $Y$), then $[X_{\bar k}]$ and $[Y_{\bar k}]$ lie in the same connected component of $H(\bar k)$. But since $X_{\bar k}$ is rigid, the infinitesimal neighborhood of $[X_{\bar k}]$ in $H_{\bar k}$ is contained in its $\mathrm{PGL}_{N}$-orbit. This shows that the $\mathrm{PGL}_N$-orbits on $H_{\bar k}$ are open, and therefore also closed. Thus $[X_{\bar k}]$ and $[Y_{\bar k}]$ lie in the same orbit and hence $X_{\bar k}$ and $Y_{\bar k}$ are isomorphic.
\end{proof}

\begin{thm}  \label{thm:fano3-list}
  Let $X$ be a smooth projective threefold over an algebraically closed field $k$ of characteristic $p>0$ admitting a birational map $X\to Y$ where $Y$ can be described as in the list in Theorem~\ref{thm:mori-mukai}. Then $X$ is not $F$-liftable.
\end{thm}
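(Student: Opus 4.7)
The plan is a case-by-case analysis following the list of Theorem~\ref{thm:mori-mukai}. The common reduction step is that in cases (a)--(d) the map $X \to Y$ is a birational morphism between smooth threefolds, so Theorem~\ref{thm:descending-frob-lift}(b.ii) converts the $F$-liftability of $X$ into $F$-liftability of $Y$; it therefore suffices to prove that each $Y$ appearing in the list is itself not $F$-liftable. (For the fibration case (e), where $Y$ is a non-toric del Pezzo surface, I would interpret $X \to Y$ as a conic bundle, verify $\pi_*\cO_X = \cO_Y$ and $R^1\pi_*\cO_X = 0$, then apply Theorem~\ref{thm:descending-frob-lift}(b.i) together with Theorem~\ref{thm:c1-surfaces}.)

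For the blow-up cases (a-2), (c-1), and (c-2), where $Y = \Bl_W Z$ with $Z$ a product of projective spaces and $W$ a specific smooth curve, I would apply Proposition~\ref{prop:blow-ups}: any Frobenius lifting of $Y$ descends to a Frobenius lifting of $Z$ compatible with a lifting of $W$. Combining Lemma~\ref{lem:comp_f-liftable_comp_f-split} and Corollary~\ref{cor:frobenius_subscheme_containment} this forces $W$ to be compatibly $F$-split for some $F$-splitting of $Z$. Using Corollary~\ref{cor:flift-products} to decompose Frobenius liftings of products, together with the description of $\Delta_{\wt F_Z}$ from Corollary~\ref{cor:all-properties-of-divisor}, one obtains numerical constraints (degrees, positions of intersections with boundary strata) that rule out each listed curve ---those of bi-degree $(2,1)$, tri-degrees $(1,1,2)$ or $(0,1,1)$, tri-diagonal curves, and planar conics.

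For the Fano threefolds of Picard rank one or two in cases (a-1), (b-1), (b-2), (b-3), there is no reduction to a simpler base. Here I would argue directly by checking that Bott vanishing \eqref{eqn:bott-van}, which is a necessary condition for $F$-liftability, fails for each such $Y$. This amounts to an explicit cohomology calculation on each variety using the hypersurface or Grassmannian section description, via adjunction, Euler sequences, and Bott's formula on the toric ambient space, to produce an ample $L$ and indices $i,j$ with $j>0$ and $H^j(Y, \Omega^i_Y \otimes L) \neq 0$.

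The main obstacle is case (d), the blow-up of $\PP^3$ along a twisted cubic $C$. Here Proposition~\ref{prop:blow-ups} alone is insufficient, since $C$ can potentially be compatibly split for non-standard $F$-splittings of $\PP^3$ arising from non-standard Frobenius liftings. Following the strategy highlighted in the introduction, I would instead use the extra fibration structure on $Y$ (the $\PP^2$-bundle over $\PP^1$ coming from the pencil of quadrics containing $C$, or the $\PP^1$-fibration over $\PP^2$ coming from residual points of secants) and apply the relative $F$-splitting framework of \S\ref{sec:f_liftings_and_splittings_relative}. Concretely, I would compute the horizontal and vertical parts of the associated $\QQ$-divisor $\Delta_{\wt F_Y}$ via Lemma~\ref{lem:compatibility-divisors} and Corollary~\ref{cor:all-properties-of-divisor}, compare them with the constraint $\Delta_{\wt F_Y} \sim_{\QQ} -K_Y$, and extract a numerical contradiction, just as in the surface argument of \S\ref{s:surfaces}.
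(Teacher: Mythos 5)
Your overall skeleton (reduce via Theorem~\ref{thm:descending-frob-lift}, then kill each $Y$ one at a time) is the right one and matches the paper. The gap is in the treatment of cases (c-1) and (c-2). You group them with (a-2) as blow-ups of a \emph{product} in a center that is \emph{not a product}, and propose to contradict via Corollary~\ref{cor:flift-products} together with $\QQ$-divisor bookkeeping. But (c-1) is $\mathrm{Bl}_C\,\PP^3$ with $C$ a planar conic: the ambient $\PP^3$ is not a nontrivial product, so the product-decomposition mechanism (essentially Proposition~\ref{prop:compatible_with_frobenius_on_products_preliminary}, which you gesture at but never invoke by name) never gets off the ground. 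And in (c-2) the ambient \emph{is} $\PP^1\times\PP^2$, but the center $\{t\}\times(\text{conic})$ \emph{is} a product of subvarieties of the two factors, so the decomposition criterion is satisfied and produces no contradiction. This is why the paper treats (c-1) and (c-2) by a different route: use Lemma~\ref{lem:negative_divisors} to get a Frobenius lifting compatible with $\overline{H}+E$ (strict transform of the plane $H\simeq\PP^2$ containing the conic plus the exceptional divisor), restrict to $\overline H$ via Lemma~\ref{lem:flift-on-compatible-divisors} to obtain a Frobenius lifting of $(\PP^2,\text{conic})$, and then apply Lemma~\ref{lem:conics_not_frobenius_compatible} (a direct computation showing $\Omega^1_{\PP^2}(\log D)\vert_C=\cO_C(-2)\oplus\cO_C(1)$ for $C$ a tangent line, incompatible with injectivity of $\xi\vert_C$). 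Your ``numerical constraints'' plan would need to reproduce something of this strength for the conic; as stated it does not.

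Two smaller discrepancies, neither fatal. For (a-1) you want to compute Bott vanishing on the hypersurface directly; the paper instead exhibits M--M 3.17 as the blow-up of $\PP^1\times\PP^2$ along the degree-two Veronese curve and hands it to the product-center argument with (a-2) — that is the intended reason it appears in list (a) rather than (b). Your route is not obviously unfeasible but is not what the paper does and would need a concrete non-vanishing $H^j(\Omega^i\otimes L)$. For (d) your plan is essentially the paper's once you fix the fibration: $\mathrm{Bl}_C\PP^3$ is a $\PP^1$-bundle $\PP(\cE)\to\PP^2$ for a non-split rank-two $\cE$, not a $\PP^2$-bundle over $\PP^1$; the contradiction is that Corollary~\ref{cor:all-properties-of-divisor}(a),(c) forces $-(K_{Y/\PP^2}+E)$ to be $\QQ$-effective, while a direct computation gives $K_{Y/\PP^2}+E\sim Q$ for $Q$ a strict transform of a quadric, which is not anti-effective.
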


The above theorem implies that Conjecture~\ref{conj:froblift} holds for Fano threefolds in characteristic $p$ provided that the Mori--Mukai classification (more precisely, Theorem~\ref{thm:mori-mukai}) is valid in that characteristic. In the following proof, we deduce Theorem~\ref{thm:fano3} from it by lifting a given $F$-liftable variety to characteristic zero, applying the Mori--Mukai classification there, and then descending back to characteristic $p$. This last step is a bit delicate, and it relies on Proposition~\ref{prop:fano-rigid-spec}, which in turn requires the boundedness of Fano threefolds (Assertion~\ref{assn:fano3-bdd}).

\begin{proof}[Proof of Theorem~\ref{thm:fano3} (assuming Theorem~\ref{thm:fano3-list})]
First, we define $p_0$. Let $X_1, \ldots, X_r$ be the non-toric rigid complex Fano threefolds. By Theorem~\ref{thm:mori-mukai}, for each $i$ there exists a fibration $\pi_i \colon X_i\to Y_i$ where $Y_i$ is either non-rigid or described as in the list (a-1) \ldots (e). As each $X_i$ is rigid, it is defined over $\overline \QQ$. Further, by Lemma~\ref{lemma:models} below, each fibration $\pi_i$ is defined over $\overline\QQ$ as well. We can therefore find a number field $K$, an integer $N$, and models $\pi_i\colon \mathscr{X}_i\to \mathscr{Y}_i$ of $\pi_i\colon X_i\to Y_i$ over $\cO_K[1/N]$ with the following properties:
\begin{itemize}
  \item $\mathscr{X}_i$ and $\mathscr{Y}_i$ are smooth and proper over $\cO_K[1/N]$, with ample anticanonical bundles, 
  \item $\pi_i$ is a fibration,
  \item $H^i(\mathscr{X}_i, T_{\mathscr{X}_i/\cO_K[1/N]}) = 0$ for $i>0$, 
  \item if $Y_i$ is rigid, then $\mathscr{Y}_i$ can be described over $\cO_K[1/N]$ as in the list (a-1) \ldots (e).
\end{itemize}
Let $m$ be as in Theorem~\ref{thm:boundedness}, and let $\chi_i(t) = \chi(X_i, -tK_{X_i})$, $N_i = \chi_i(m) = h^0(X_i, -mK_{X_i})$. 
By potentially increasing $N$, we may assume using \cite[1.6.E Exercises (5)]{BrionKumar} that all the geometric closed fibres of $\cX_i \to \Spec(\cO_K[1/N])$ are $F$-split and hence the conditions (1) and (2) of Proposition~\ref{prop:fano-rigid-spec} are satisfied for the localization of $\cX_i \to \Spec(\cO_K[1/N])$ at any prime in the base.  

Let $X$ be an $F$-liftable Fano threefold over an algebraically closed field $k$ of characteristic $p\geq p_0$. Suppose that $X$ is not toric. Since $X$ is rigid and $H^2(X, T_X) = 0$ (Remark~\ref{rmk:fano3}(a)), there exists a unique deformation $\wt X$ of $X$ over $W(k)$ (which algebraizes because the ample line bundle $\omega_X^{-1}$ lifts).  Since $X$ is $F$-liftable it is also $F$-split and hence the assumpions of Proposition~\ref{prop:fano-rigid-spec} are satisfied for the family $\wt X/ W(k)$.  This property is clearly invariant under base change.  Let $F$ be an algebraically closed field containing both $\mathbf{C}$ and $W(k)$. Since $\wt X_F$ is a rigid Fano threefold, which is moreover not toric (otherwise it would have a toric Fano model $Y$ over $W(k)$, and Proposition~\ref{prop:fano-rigid-spec} would imply $X\isom Y_0$ i.e.\ $X$ toric), there exists an isomorphism $\iota\colon (X_i)_F \isom \wt X_F$ for some $i\leq r$. Our next goal is to spread it out and obtain an isomorphism over $k$.

Let $V_0$ be the completion of $\cO_K[1/N]$ at a prime above $p$. Since $V_0$ is a finite extension of $\ZZ_p$, there exists a finite extension $V$ of $W(k)$ also contained in $F$ such that $\cO_K[1/N]\subseteq V$ as subrings of $F$. Passing to a further finite extension of $V$, we can assume that the isomorphism $\iota$ is defined over the subfield $V[1/p]$ of $F$. 
%
%
Applying Proposition~\ref{prop:fano-rigid-spec} to $\wt X_V$ and $(\mathscr{X}_i)_V$ over $\Spec V$ gives an isomorphism
\[ 
  \mathscr{X}_i \otimes_{\cO_K[1/N]} k  \simeq \wt X_k = X.
\]

In particular, $\mathscr{X}_i \otimes_{\cO_K[1/N]} k $ is $F$-liftable. By Theorem~\ref{thm:descending-frob-lift}(b) and Proposition~\ref{prop:relative_Kodaira_Fsplit} applied for $L = -K_X$, so is $\mathscr{Y}_i \otimes_{\cO_K[1/N]} k$. In particular, being Fano, it must be rigid, which implies by semi-continuity that $Y_i$ is rigid as well. In this case, $\mathscr{Y}_i \otimes_{\cO_K[1/N]} k$ is described as in the list (a-1) \ldots (d), and by Theorem~\ref{thm:fano3-list} we obtain a contradiction. Therefore $X$ is toric.
\end{proof}

\begin{prop}[Relative Kodaira vanishing for $F$-split total space]
\label{prop:relative_Kodaira_Fsplit}
Let $X$ be an $F$-split smooth projective variety, let $f \colon X \to Y$ be a projective morphism and let $L$ be an $f$-ample divisor.  Then $R^jf_*\cO_X(K_X + L) = 0$ for $j>0$.
\end{prop}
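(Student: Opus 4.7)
The plan is to deduce the relative vanishing from the absolute Kodaira-type vanishing for $F$-split smooth projective varieties via the standard ample-twist trick. First, since $X$ is projective and $f$ is proper, the scheme-theoretic image $f(X) \subseteq Y$ is projective, and $R^j f_* \cO_X(K_X + L)$ is supported there; replacing $Y$ with $f(X)$ I may assume $Y$ is itself projective. Fix an ample line bundle $H$ on $Y$; by the projection formula,
\[
R^j f_* \cO_X(K_X + L) \otimes \cO_Y(mH) \isom R^j f_* \cO_X(K_X + L + mf^*H)
\]
for every $m \geq 0$. Since $L$ is $f$-ample, the bundle $L + mf^*H$ becomes ample on $X$ for $m \gg 0$.

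Next, I would invoke the absolute Kodaira vanishing for $F$-split smooth projective varieties: if $X$ is $F$-split projective of dimension $n$ and $A$ is an ample line bundle on $X$, then $H^j(X, \omega_X \otimes A) = 0$ for all $j > 0$. This is a standard consequence of the splitting $\sigma \colon F_{X*}\cO_X \to \cO_X$. Tensoring $\sigma$ with any line bundle $M$ makes $H^i(X, M)$ a direct summand of $H^i(X, F_{X*}M^p) = H^i(X, M^p)$; iterating, $H^i(X, M) \hookrightarrow H^i(X, M^{p^e})$ for every $e \geq 0$. Applied to $M = B$ ample, Serre vanishing kills the right-hand side for $e \gg 0$, yielding $H^i(X, B) = 0$ for $i > 0$. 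Applied to $M = A^{-1}$ and combined with Serre duality, $H^i(X, A^{-1}) \hookrightarrow H^i(X, A^{-p^e}) \isom H^{n-i}(X, \omega_X \otimes A^{p^e})^*$, which vanishes for $i < n$ and $e \gg 0$ by the previous step since $\omega_X \otimes A^{p^e}$ is ample once $e$ is large. One more application of Serre duality gives $H^j(X, \omega_X \otimes A) = 0$ for $j > 0$, as required. Applied with $A = \cO_X(L + mf^*H)$ (ample for $m \gg 0$), this yields
\[
H^j(X, \cO_X(K_X + L + mf^*H)) = 0 \quad \text{for all } j > 0.
\]

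Finally, for $m \gg 0$ Serre vanishing on $Y$ gives $H^i(Y, R^j f_* \cO_X(K_X + L) \otimes \cO_Y(mH)) = 0$ for $i > 0$, so the Leray spectral sequence degenerates along the diagonals of interest and identifies
\[
H^0(Y, R^j f_* \cO_X(K_X + L) \otimes \cO_Y(mH)) = H^j(X, \cO_X(K_X + L + mf^*H)) = 0.
\]
For $m \gg 0$ the sheaf $R^j f_* \cO_X(K_X + L) \otimes \cO_Y(mH)$ is also globally generated by Serre's theorem; a globally generated coherent sheaf with no global sections must vanish, so $R^j f_* \cO_X(K_X + L) = 0$ for $j > 0$. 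The only non-formal ingredient is the Kodaira vanishing for $F$-split projective varieties, and I expect no real obstacle there; the remaining steps are a routine bootstrap from absolute to relative vanishing.
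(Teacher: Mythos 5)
Your proposal is correct and follows essentially the same path as the paper: twist by a high power of an ample on $Y$, use the projection formula and the collapse of the Leray spectral sequence to identify $H^0(Y, R^jf_*\cO_X(K_X+L)\otimes\cO_Y(mH))$ with $H^j(X, \cO_X(K_X + \text{ample}))$, kill the latter by Kodaira vanishing for $F$-split varieties, and conclude via global generation. The only differences are cosmetic: you supply a self-contained proof of absolute Kodaira vanishing for $F$-split projective varieties where the paper merely invokes it as standard, and you add the preliminary reduction replacing $Y$ with the image $f(X)$ (a sensible clarification, since the paper's choice of an ample $M$ on $Y$ implicitly assumes $Y$ is quasi-projective; note, though, that your assertion that the image of a projective variety is automatically projective is not quite a general fact, and is only safe here because $Y$ is assumed or can be assumed quasi-projective, which is all that is needed).
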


\begin{proof}
Let $\mathscr{F}_j = R^jf_*\cO_X(K_X + L)$ and let $M$ be a fixed ample divisor on $Y$. Take $n$ large enough so that (1) $\mathscr{F}_j \otimes \cO_Y(nM)$ is globally generated and (2) has no higher cohomology for all $j$ and that (3) $L + n f^* M$ is ample on $X$. By (1), the vanishing of $\mathscr{F}_j$ is equivalent to $H^0(Y, \mathscr{F}_j\otimes \cO_Y(nM)) = 0$. By the projection formula, the Leray spectral sequence for $\cO_X(K_X + L + n f^* M)$ reads
\[ 
  E_2^{ij} = H^i(Y, \mathscr{F}_j \otimes \cO_Y(nM)) \quad \Rightarrow \quad H^{i+j}(X, \cO_X(K_X+L+nf^* M)),
\]
and by (2) this collapses yielding $H^0(Y, \mathscr{F}_j\otimes \cO_Y(nM)) \isom H^j(X, \cO_X(K_X+L+nf^* M))$, which is $H^j(X, \cO_X(K_X + \text{ample}))$ by (3). Since $X$ is $F$-split, it satisfies Kodaira vanishing, and hence this group is zero for $j>0$. 
\end{proof}

\begin{lemma}[{Models over subfields}] \label{lemma:models}
  Let $k\subseteq k'$ be an extension of algebraically closed fields, and let $X$ be a normal projective variety over $k$ with $H^1(X, \cO_X) = 0$. Then for every fibration $\pi'\colon X_{k'} \to Y'$ to a normal projective variety $Y'$ over $k'$ there exists a fibration $\pi\colon X\to Y$ to a normal projective variety $Y$ over $k$ and an isomorphism $\iota\colon Y_{k'} \isom Y'$ such that the following triangle commutes
  \[ 
    \xymatrix@R=1.3em{
      X_{k'} \ar[rr]^{\pi'} \ar[dr]_{\pi_{k'}} & & Y' \\
      & Y_{k'} \ar[ur]_{\iota} .
    }
  \]
\end{lemma}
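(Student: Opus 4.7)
The plan is to produce $Y$ and $\pi$ by descending a suitable polarization of $\pi'$ from $X_{k'}$ to $X$ and then taking $\Proj$ of the resulting section ring. First, I would pick an ample line bundle $L'$ on $Y'$ and, after replacing it by a sufficiently high power, arrange that $L'$ is very ample with $H^i(Y', L'^{\otimes n}) = 0$ for $i>0$ and $n \geq 1$, so that $Y' = \Proj \bigoplus_{n \geq 0} H^0(Y', L'^{\otimes n})$. Setting $M' := (\pi')^* L'$, the projection formula combined with $\pi'_* \cO_{X_{k'}} = \cO_{Y'}$ yields $H^0(X_{k'}, M'^{\otimes n}) = H^0(Y', L'^{\otimes n})$, so $Y'$ is recovered as $\Proj \bigoplus_n H^0(X_{k'}, M'^{\otimes n})$, and $M'$ is globally generated.

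The hard part will be descending $M'$ to a line bundle on $X$; this is where the hypothesis $H^1(X, \cO_X) = 0$ enters. To accomplish it, I would invoke Grothendieck's representability of the Picard functor for the projective $k$-scheme $X$: $\Pic_X$ is a $k$-group scheme locally of finite type whose tangent space at the origin equals $H^1(X, \cO_X) = 0$, hence, by translation, at every point. So $\Pic_X$ is reduced of pure dimension zero and, being locally of finite type over the algebraically closed field $k$, decomposes as $\Pic_X = \bigsqcup_{i \in I} \Spec k$. Base change along $k \to k'$ preserves this decomposition, so
\[
\Pic(X) \;=\; \Pic_X(k) \;=\; I \;=\; \Pic_X(k') \;=\; \Pic(X_{k'}),
\]
with the bijection realized by pullback along $X_{k'} \to X$. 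Consequently $M' \isom M_{k'}$ for some $M \in \Pic(X)$.

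Finally, I would set $R := \bigoplus_{n \geq 0} H^0(X, M^{\otimes n})$; flat base change gives $R \otimes_k k' \isom \bigoplus_n H^0(X_{k'}, M'^{\otimes n}) =: R'$, which is a finitely generated $k'$-algebra (it is a section ring of the very ample $L'$ on $Y'$). Faithfully flat descent forces $R$ to be finitely generated over $k$ as well, and I would define $Y := \Proj R$; the identification $R \otimes_k k' \isom R'$ produces a canonical isomorphism $\iota \colon Y_{k'} \isomto Y'$. Global generation of $M'$ descends to $M$ by faithful flatness, so $M$ defines a morphism $\pi \colon X \to Y$ whose base change to $k'$ agrees with $\pi'$ via $\iota$. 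Normality of $Y$ and the identity $\pi_* \cO_X = \cO_Y$ then descend from the corresponding $k'$-statements by faithful flatness of $k \to k'$, completing the construction. The only substantive obstacle is the descent of $M'$, which the hypothesis $H^1(X, \cO_X) = 0$ is precisely tailored to overcome by forcing $\Pic_X$ to be étale.
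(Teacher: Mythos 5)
Your proof is correct and follows essentially the same route as the paper's: use $H^1(X,\cO_X)=0$ to make $\Pic X$ \'etale (hence $\Pic X \isomto \Pic X_{k'}$), descend the semi-ample bundle $(\pi')^*L'$ to $X$, and take $\Proj$ of its section ring, recovering $Y'$ over $k'$ by flat base change. The paper's version is terser — it does not spell out the \'etale-ness of $\Pic_X$, the finite generation of the section ring, or the descent of normality — but the core idea and every step are the same.
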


\begin{proof}
Let $\cO_{Y'}(1)$ be an ample line bundle on $Y'$ and let $L' = (\pi')^* \cO_{Y'}(1)$, so that 
\[ 
  Y' = \Proj \bigoplus_{n\geq 0} \Gamma(X_{k'}, (L')^n).
\]
Since $H^1(X, \cO_X) = 0$, $\Pic X$ is discrete, and hence $\Pic X \isomto \Pic X_{k'}$. Let $L \in \Pic X$ correspond to $L'$ under this isomorphism; then $L$ is ample. We set
\[ 
  Y = \Proj \bigoplus_{n\geq 0} \Gamma(X, L^n).
\]
Since by flat base change $\Gamma(X, L^n)\otimes_k k'\isom\Gamma(X_{k'}, (L')^n)$, we get the desired isomorphism $Y_{k'}\simeq Y'$.
\end{proof}

\begin{proof}[Proof of Theorem~\ref{thm:fano3-list}]
By Theorem~\ref{thm:descending-frob-lift}, it is enough to show that $Y$ is not $F$-liftable. We do this case by case, deferring the more involved arguments to lemmas following the proof.

\medskip

\textbf{Cases (a-1) and (a-2): blow-ups on a product.}  Consider first the third example (M--M 4.6) in (a-2), i.e.\ the blow-up of $\PP^1 \times \PP^1 \times \PP^1$ along the tridiagonal curve
\[
  C = \left\{ (x,x,x) \in \PP^1 \times \PP^1 \times \PP^1 \mid x \in \PP^1 \right\}. 
\]
The variety $X$ cannot be $F$-liftable, because then $(\PP^1 \times \PP^1 \times \PP^1,C)$ would admit a Frobenius lifting by Lemma~\ref{lem:negative_divisors} and Proposition~\ref{prop:blow-ups}, which is impossible by Proposition~\ref{prop:compatible_with_frobenius_on_products_preliminary} (as the center $C$ is not the product of subvarieties of the factors). The remaining cases (M--M  3.17, 3.21, 4.3, and 4.8) are not $F$-liftable by an analogous argument. Note that Fano threefold (a-1) (M--M 3.17) is the blow-up of $\PP^1 \times \PP^2$ along the graph of the Segre embedding $\PP^1 \to \PP^2$ of degree two. 

\medskip

\textbf{Cases (b-1), (b-2), and (b-3): violating Bott vanishing \eqref{eqn:bott-van}.} Fano threefolds (b-1), (b-2), and (b-3) do not satisfy Bott vanishing (Lemma~\ref{lemma:grassmann} below, \cite[Example~3.2.6]{PartI} or \cite[\S 4.1]{BTLM},  and \cite[\S 4.2]{BTLM}, respectively), and so they are not $F$-liftable. 

\medskip

\textbf{Cases (c-1) and (c-2): inducing a lifting of Frobenius on $\PP^2$ compatible with a conic.} Let $X = \Bl_C W$ be a blow-up of either $W=\PP^3$ or $\PP^1\times \PP^2$ along a conic $C$ contained in a plane $H \simeq \PP^2 \subseteq W$, and suppose that $X$ is $F$-liftable.

Let $\overline{H} \isom \PP^2$ be the strict transform of $H$ on $X$, and let $E$ be the exceptional divisor of the blow-up. By Lemma~\ref{lem:negative_divisors}, we get that $(X, \overline{H} + E)$ admits a Frobenius lifting, and so $(\overline{H}, \overline{H}\cap E)$ is $F$-liftable by Lemma~\ref{lem:flift-on-compatible-divisors}. Since $C\isom \overline{H}\cap E$ is a conic, this contradicts Lemma~\ref{lem:conics_not_frobenius_compatible}. 

\medskip
\textbf{Case (d): blow-up along twisted cubic.} See Lemma~\ref{lem:twisted-cubic} below.

\medskip
\textbf{Case (e): del Pezzo surface.} Follows from Theorem~\ref{thm:c1-surfaces}.
\end{proof}

\begin{lemma}[M--M 1.16] \label{lemma:grassmann} 
  Let $X = \Gr(2,5) \cap \PP^6$, where $\Gr(2,5) \subseteq \PP^9$ is the Grassmannian of planes in $\PP^5$ embedded in $\PP^9$ by the  Pl\"ucker embedding  and $\PP^6 \subseteq \PP^9$ is a general linear subspace. Then $H^1(X,\Omega_X^2(1))\neq 0$, where $\cO_X(1)$ is the restriction of $\cO_{\PP^9}(1)$ to $X$.
\end{lemma}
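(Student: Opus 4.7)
The strategy is to compute the Euler characteristic $\chi(X,\Omega_X^2(1))$ by Hirzebruch--Riemann--Roch, while showing separately that the higher cohomology groups $H^2$ and $H^3$ vanish, so that the non-vanishing of $H^1$ is forced by sign.

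First I would rewrite the sheaf: $X$ is a smooth Fano threefold of index $2$ (the del Pezzo threefold $V_5$ of degree $5$), so $\omega_X \cong \cO_X(-2)$, and the canonical identification $\Omega_X^2 \cong T_X \otimes \omega_X$ for smooth $3$-folds gives $\Omega_X^2(1) \cong T_X(-1)$. Serre duality identifies $H^i(X, \Omega_X^2(1))$ with $H^{3-i}(X, \Omega_X^1(-1))^\vee$; for $i = 2, 3$ the latter vanishes by Akizuki--Nakano since $p + q < 3$. The variety $X$ manifestly lifts to $W(k)$ (the defining section of $\cO(1)^{\oplus 3}$ on $\Gr(2,5)$ lifts trivially), so the Deligne--Illusie form of Akizuki--Nakano applies for $p > 3$; for the few residual characteristics one may instead appeal to upper semi-continuity of cohomology in a flat family over $\Spec \ZZ[1/N]$ to reduce the non-vanishing of $H^1$ to characteristic zero.

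Second, I would compute $\chi(X, T_X(-1))$ by Hirzebruch--Riemann--Roch using the standard invariants of $V_5$: $L^3 = 5$, $c_1(T_X) = 2L$, $L \cdot c_2 = 12$ (pinned down by $\chi(\cO_X) = c_1 c_2 / 24 = 1$), and $c_3 = \chi_{\mathrm{top}}(X) = 4$ (from the Betti numbers $1, 0, 1, 0, 1, 0, 1$). Expanding $\mathrm{ch}(T_X) \cdot e^{-L} \cdot \mathrm{td}(X)$ and integrating the degree-$3$ part yields $\chi(X, \Omega_X^2(1)) = -3$. Combined with the vanishings above, this gives
\[
  h^1(X, \Omega_X^2(1)) \;=\; h^0(X, \Omega_X^2(1)) + 3 \;\geq\; 3,
\]
and in particular $H^1(X, \Omega_X^2(1)) \neq 0$.

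The only real source of friction is the arithmetic of the Riemann--Roch computation and confirming the numerical invariants of $V_5$; once those are in place the conclusion is immediate. A more hands-on alternative would compute $H^\ast(X, \Omega_G^2|_X(1))$ via the Koszul resolution of $\cO_X$ inside $G = \Gr(2,5)$ together with Bott's theorem for homogeneous bundles on $G$, and then transfer to $H^\ast(X, \Omega_X^2(1))$ through the three-step filtration on $\Omega_G^2|_X(1)$ coming from the conormal sequence $0 \to \cO_X(-1)^{\oplus 3} \to \Omega_G^1|_X \to \Omega_X^1 \to 0$, but this route requires considerably more bookkeeping and is not needed here.
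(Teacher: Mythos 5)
Your proposal is correct, but it follows a genuinely different route from the paper's. You rewrite $\Omega_X^2(1) \cong T_X(-1)$ (both arguments start here), but then you combine Serre duality, Akizuki--Nakano, and Hirzebruch--Riemann--Roch: the vanishings $H^2 = H^3 = 0$ force $h^1 - h^0 = -\chi = 3$, hence $h^1 \geq 3$. The paper instead reduces at once to characteristic zero by semicontinuity and then computes via the Koszul resolution of $\cO_X$ in $\Gr(2,5)$, together with Snow's vanishing theorem for $\Omega^5_{\Gr(2,5)}(j)$ and the twisted conormal sequence $0 \to T_X(-1) \to T_{\Gr(2,5)}(-1)|_X \to \cO_X^{\oplus 3} \to 0$; this yields the exact answer $H^1 \cong k^3$ and $H^0 = 0$. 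Your route buys independence from Snow's Grassmannian-specific input, at the price of needing the numerical invariants of $V_5$ ($L^3 = 5$, $b_3 = 0$, etc.) as hypotheses; I checked your HRR arithmetic and $\chi(X, T_X(-1)) = -3$ is right. One cosmetic remark: your bifurcation between "use Deligne--Illusie AN for $p > 3$" and "semicontinuity for small $p$" is fine but slightly redundant, since the semicontinuity reduction works uniformly in all characteristics (as the paper does), and it also handles the Riemann--Roch invariants in one stroke---one could state the whole argument over $\bb{C}$ and then invoke semicontinuity once at the end.
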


\begin{proof}
The setting admits a natural lifting to characteristic zero, and by the semicontinuity theorem, it is enough to show that the required non-vanishing holds for this lifting. Therefore, we can assume that $X$ is defined over $\bb{C}$. 

We have $\omega_{\Gr(2,5)} \isom \cO_{\Gr(2,5)}(-5)$, hence $\omega_X \isom \cO_X(-2)$ and therefore $H^1(X,\Omega_X^2(1)) = H^1(X, T_X(-1))$. First, we prove that $H^i(X, T_{\Gr(2,5)}(-1)|_X)=0$ for all $i$. Consider the Koszul resolution
\[
  0 \ra \cO_{\Gr(2,5)}(-3) \ra \cO_{\Gr(2,5)}(-2)^{\oplus 3} \ra \cO_{\Gr(2,5)}(-1)^{\oplus 3} \ra \cO_{\Gr(2,5)} \ra \cO_X \ra 0,
\]
and tensor it by $T_{\Gr(2,5)}(-1)$ to get
\[
  0 \to T_{\Gr(2,5)}(-4) \to T_{\Gr(2,5)}(-3)^{\oplus 3} \to T_{\Gr(2,5)}(-2)^{\oplus 3} \to T_{\Gr(2,5)}(-1) \to T_{\Gr(2,5)}(-1)|_X \to 0.
\]
By \cite[Theorem, p.171, (3)]{snow-grassmann}, all the cohomology groups of $T_{\Gr(2,5)}(-k) \isom \Omega^5_{\Gr(2,5)}(5-k)$ vanish for $1 \leq k \leq 4$, and so the above exact sequence shows that the same holds for $T_{\Gr(2,5)}(-1)|_X$.

The dual of the conormal exact sequence tensored by $\cO_X(-1)$ is
\[
  0 \ra T_X(-1) \ra T_{\Gr(2,5)}(-1)|_X \ra \cO_X^{\oplus 3} \ra 0.
\]
Since the cohomology groups of the middle sheaf vanish, we get $H^1(T_X(-1)) = H^0(X,\cO_X)^{\oplus 3} \neq 0$.
\end{proof}

\begin{lemma} \label{lem:conics_not_frobenius_compatible} 
  Let $D \subseteq \PP^2$ be a smooth conic. Then $(\PP^2, D)$ is not $F$-liftable.
\end{lemma}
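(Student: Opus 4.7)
My plan is to argue by direct computation, deriving a contradiction from the coefficient of a single monomial in the compatibility equation. First I would reduce to a normal form: since $\PP^2$ is rigid (the Euler sequence gives $H^i(\PP^2, T_{\PP^2}) = 0$ for $i \geq 1$), any lift $\widetilde{\PP^2}$ is the standard $W_2(k)$-lift, and since $\mathrm{PGL}_3$ acts transitively on smooth conics in $\PP^2$ in every characteristic and the action lifts to $W_2(k)$, I may fix coordinates $[x_0\!:\!x_1\!:\!x_2]$ so that $D = V(f)$ with $f = x_0 x_2 - x_1^2$. Then any Frobenius lifting takes the form $\widetilde F^*(x_i) = x_i^p + p A_i$ for homogeneous $A_i \in k[x_0, x_1, x_2]_p$, with $\widetilde D = V(\widetilde f)$ for $\widetilde f = f + ph$ and $h \in k[x_0, x_1, x_2]_2$, and the compatibility condition reads $\widetilde F^*(\widetilde f) = \widetilde f^{\,p}(1 + p\lambda)$ for some $\lambda \in k$.

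Next I would expand both sides modulo $p^2$. Using the binomial identity
\[
  (x_0 x_2 - x_1^2)^p = x_0^p x_2^p + (-1)^p x_1^{2p} + p\,\widetilde S,\qquad \widetilde S := \sum_{k=1}^{p-1}\tfrac{1}{p}\tbinom{p}{k}(x_0 x_2)^{p-k}(-x_1^2)^k,
\]
in $W_2(k)[x_0, x_1, x_2]$, and dividing through by $p$, the compatibility reduces to a polynomial identity in $k[x_0, x_1, x_2]_{2p}$ of the schematic form
\[
  x_0^p A_2 + x_2^p A_0 - 2\, x_1^p A_1 + h(x_0^p, x_1^p, x_2^p) = \widetilde S + \lambda\bigl(x_0^p x_2^p + (-1)^p x_1^{2p}\bigr).
\]
In characteristic $2$ an additional $x_1^{2p}$-summand appears on the left (arising because $(-1)^p x_1^{2p}$ then fails to cancel its counterpart), but it contributes to a different monomial than the one I exploit below.

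Finally, I would inspect the coefficient of $x_0^{p-1} x_1^2 x_2^{p-1}$. On the left, the summands $x_0^p A_2$, $x_2^p A_0$, and $-2 x_1^p A_1$ only produce monomials in which at least one exponent is $\geq p$ (and in characteristic $2$ the prefactor $-2$ automatically annihilates the $A_1$-contribution), while $h(x_0^p, x_1^p, x_2^p)$ only produces monomials whose exponents are all divisible by $p$; since the exponent triple $(p-1, 2, p-1)$ satisfies neither condition, the left-hand coefficient vanishes. On the right, only the $k = 1$ summand of $\widetilde S$ matches, contributing $\tfrac{1}{p}\tbinom{p}{1}(x_0 x_2)^{p-1}(-x_1^2) = -x_0^{p-1} x_1^2 x_2^{p-1}$, while $\lambda(x_0^p x_2^p + (-1)^p x_1^{2p})$ contributes nothing. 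Thus the identity would force $0 = -1$ in $k$, the desired contradiction. The main technical hurdle will be the careful $W_2(k)$-bookkeeping needed to extract the displayed identity cleanly and uniformly in $p$ (in particular to treat $p = 2$), and to verify that the gauge freedom $A_i \mapsto A_i + \mu x_i^p$ from rescaling $\widetilde F^*(x_i)$ by a common $W_2(k)$-unit leaves the coefficient of $x_0^{p-1} x_1^2 x_2^{p-1}$ unchanged, since the induced shift $2\mu(x_0^p x_2^p - x_1^{2p})$ is supported on other monomials.
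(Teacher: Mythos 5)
Your proof is correct, and it takes a genuinely different route from the paper's. The paper restricts the logarithmic cotangent bundle $\Omega^1_{\PP^2}(\log D)$ to the tangent line $C$ of $D$ at a general point, computes (via an explicit change of charts) the splitting type $\Omega^1_{\PP^2}(\log D)|_C \isom \cO_C(-2)\oplus\cO_C(1)$, and then derives a contradiction with the injectivity of the restricted map $\xi = \tfrac1p\wt F{}^*_X$, using that an injection of this kind forces both split degrees to be nonpositive. Your argument instead works directly in the homogeneous coordinate ring: after the (legitimate) normalization $D=V(x_0x_2-x_1^2)$, you expand the compatibility $\wt F^*(\wt f) = (1+p\lambda)\wt f{}^p$ modulo $p^2$, divide by $p$, and extract the coefficient of the single monomial $x_0^{p-1}x_1^2x_2^{p-1}$, which is $0$ on the left and $-1$ on the right. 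Both approaches handle $p=2$ (in the paper's computation the factor $2dx/x$ simply vanishes, making the transition matrix diagonal but still of degrees $-2$ and $1$; in yours the $-2x_1^pA_1$ term vanishes and the extra $x_1^{2p}$-summand misses the target monomial). Your version is more elementary and self-contained — no Cartier isomorphism, no analysis of vector bundles on $\PP^1$, no choice of a "general" point — at the cost of a coordinate normal form and some $W_2(k)$-bookkeeping; the paper's version is more conceptual and suggests a general strategy (find a curve on which $\Omega^1_X(\log D)$ has a positive summand) that reappears elsewhere in the surface arguments. The one technical point worth keeping an eye on when you write this up is the reduction to normal form: you should make explicit that $\mathrm{PGL}_3$ is smooth over $\ZZ$ (so automorphisms of $\PP^2_k$ lift to $\PP^2_{W_2(k)}$) and that $\Pic(\PP^2_{W_2(k)})=\ZZ$ (so $\wt D$ is cut out by a section of $\cO(2)$ of the claimed form $f+ph$); both are standard and your outline already indicates them.
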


\begin{proof}
Assume by contradiction that $(\PP^2,D)$ is $F$-liftable and fix a Frobenius lifting. Let 
\[
  \xi \colon F^*\Omega^1_{\PP^2}(\log D) \to \Omega^1_{\PP^2}(\log D)
\]
be the associated morphism \eqref{eqn:def-log-xi}. Choose a general point $x \in D$ and take $C \subseteq \PP^2$ to be the line tangent to $D$ at $x$. Since $x$ is general and $\xi$ is generically an isomorphism (see Proposition~\ref{prop:frobenius_cotangent_morphism}), we get that $\xi|_{C}$ is injective. In particular, if $\Omega^1_{\PP^2}(\log D)|_C = \cO_C(a) \oplus \cO_C(b)$ for some $a,b \in \ZZ$, then $a,b \leq 0$ (see the proof of \cite[Lemma~6.2.1(a)]{PartI}). Indeed, if $a\leq b$, then the injectivity of $\xi|_C$ implies that we have a nonzero map $\cO_C(pb)\to \cO_C(c)$ where $c\in \{a,b\}$, so $pb\leq c\leq b$ and $b\leq 0$.

We will show that  $\Omega^1_{\PP^2}(\log D)|_C = \cO_C(-2) \oplus \cO_C(1)$ yielding a contradiction (cf.\ \cite[Lemma~4]{xin16}). To this end, choose a standard affine chart on $\bb{A}^2 \subseteq \PP^2$ in which $C$ is described as $y=0$, and $D$ as $y-x^2=0$.  In the chart $\bb{A}^2 \cap C$, we get that $\Omega^1_{\PP^2}(\log D)|_C$ is generated by 
\[
  dx \ \  \text{ and } \ \ \frac{d(y-x^2)}{y-x^2} = -\frac{dy}{x^2} + \frac{2dx}{x}.
\]
Now, we take the chart of $\PP^2$ having coordinates $\frac{1}{x}$ and $\frac{y}{x}$. In this chart $C$ and $D$ are disjoint, and so in the restriction of this chart to $C$  the vector bundle $\Omega^1_{\PP^2}(\log D)|_C$ is generated by
\[
   d\left(\frac{1}{x}\right) = -\frac{1}{x^2}dx
   \quad \text{ and } \quad
  d\left(\frac{y}{x}\right) = \frac{dy}{x} = 2dx + x\left(\frac{dy}{x^2} - \frac{2dx}{x}\right).
\]
Therewith, the coordinate-change matrix is:
\[
  \begin{bmatrix}
-\frac{1}{x^2} & 0\\
  2 & -x
  \end{bmatrix},
\]
and so $\Omega^1_{\PP^2}(\log D)|_C = \cO_C(-2) \oplus \cO_C(1)$.
\end{proof}

A surprising property of $F$-liftability is that if $X$ is $F$-liftable and $f \colon Y \to X$ is a smooth morphism such that $Rf_* \cO_Y = \cO_X$, then $f$ is relatively $F$-split. This is not true in general, if we assume that $X$ is only $F$-split.

\begin{lemma}[M--M 2.27] \label{lem:twisted-cubic} 
  Let $C \subset \PP^3$ be the twisted cubic, that is,  the image of $\PP^1$ under the embedding given by the full linear system $|\cO_{\PP^1}(3)|$. Then the blow-up $Y = \Bl_{C} \PP^3$ is not $F$-liftable.
\end{lemma}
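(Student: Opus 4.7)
I will argue by contradiction. The twisted cubic $C \subset \PP^3$ has the property that its ideal sheaf is generated in degree two by a three-dimensional space of quadrics, giving a rational map $\PP^3 \dashrightarrow \PP^2$ resolved by blowing up $C$. Geometrically, this is the map sending a point $p \notin C$ to the unique chord of $C$ passing through $p$, after identifying $\Sym^2 \PP^1 \simeq \PP^2$. The first step is to check that the resulting morphism $f \colon Y \to \PP^2$ is a smooth $\PP^1$-bundle: the fiber over a proper chord is its strict transform (a $\PP^1$ meeting $E$ in two points), the fiber over a tangent line is its strict transform (meeting $E$ with multiplicity $2$ at a single point), and cohomology and base change yield $f_* \cO_Y = \cO_{\PP^2}$ and $R^1 f_* \cO_Y = 0$.

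Now suppose $Y$ admits a Frobenius lifting $(\wt Y, \wt F_Y)$. Applying Theorem~\ref{thm:descending-frob-lift}(b.i) to $f$ produces a compatible Frobenius lifting of $\PP^2$ and a relative Frobenius lifting $\wt F_{Y/\PP^2}$; by Corollary~\ref{cor:all-properties-of-divisor}(c), the horizontal part $\Delta^h$ of the associated divisor $\Delta_{\wt F_Y}$ satisfies $\Delta^h \sim_\QQ -K_{Y/\PP^2}$. Setting $H := \pi^* \cO_{\PP^3}(1)$ and $L := f^* \cO_{\PP^2}(1)$, the three quadrics defining $f$ show $L = 2H - E$, while $K_Y = -4H + E$ gives $K_{Y/\PP^2} = 2H - 2E$, so $-K_{Y/\PP^2} = 2E - 2H$.

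The exceptional divisor $E \simeq \PP(N_{C/\PP^3})$ is a $\PP^1$-bundle over $C \simeq \PP^1$ with $\cO_E(E) = \cO_{E/C}(-1)$, so $H^0(E, \cO_E(mE)) = 0$ for all $m \geq 1$; Corollary~\ref{cor:all-properties-of-divisor}(a) therefore forces $E \leq \Delta_{\wt F_Y}$. Since $E \cdot F = 2 > 0$ on a general fiber $F$, the divisor $E$ is horizontal and appears in $\Delta^h$ with coefficient at least~$1$, while Remark~\ref{rem:coefficients-fsplit} bounds this coefficient by~$1$. Writing $\Delta^h = E + D'$ with $D'$ effective horizontal and not containing $E$, one computes $D' \cdot F = (-K_{Y/\PP^2} - E) \cdot F = 2 - 2 = 0$; but every irreducible horizontal divisor meets a general fiber positively, forcing $D' = 0$. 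Consequently $E \sim_\QQ 2E - 2H$, i.e.\ $L = 2H - E \sim_\QQ 0$ in $\Pic(Y) \otimes \QQ$, contradicting the fact that $L$ is the pullback of an ample class from $\PP^2$.

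The main delicate step is the geometric analysis of $f \colon Y \to \PP^2$ as a smooth $\PP^1$-bundle, especially verifying the fiber structure over the locus of tangent lines to $C$; once this is in place, the rest of the argument reduces to an elementary intersection-theoretic calculation in $\Pic(Y) = \ZZ H \oplus \ZZ E$.
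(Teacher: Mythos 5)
Your proof is correct and follows essentially the same route as the paper's: both exploit the conic bundle (in fact $\PP^1$-bundle) structure $f\colon Y\to\PP^2$ coming from the net of quadrics through $C$, invoke Corollary~\ref{cor:all-properties-of-divisor}(a) and (c) to force $E\leq\Delta^h_{\wt F_Y}\sim_\QQ -K_{Y/\PP^2}$, and derive a contradiction in $\Pic(Y)=\ZZ H\oplus\ZZ E$. The paper's version skips your coefficient-and-fiber-degree bookkeeping by going directly to the statement that $-(K_{Y/\PP^2}+E)\sim -Q$ would have to be $\QQ$-effective, and you verify the $\PP^1$-bundle structure by hand rather than citing Szurek--Wi\'sniewski, but these are cosmetic differences.
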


\begin{proof}
Let $E$ be the exceptional divisor of the blow-up $\pi \colon Y \to \PP^3$. Firstly, we observe that by \cite[Application 1, page 299]{szurek_wisniewski_Fano} $Y$ is isomorphic to a projective bundle $\PP(\cE)$ over $\PP^2$ for some non-split rank two vector bundle $\cE$. The morphism $f \colon Y  \to \PP^2$ is given by a pencil of quadrics in $\PP^3$ containing $C$.

If $Y$ is $F$-liftable, then Corollary~\ref{cor:all-properties-of-divisor}(a) and (c) imply that  $-(K_{Y / \PP^2}+E)$ is $\QQ$-linearly equivalent to an effective $\QQ$-divisor. In what follows, we show that this is not true. By the construction of $f$, we have $Q = f^{-1}(L)$ for some line $L \subseteq \PP^2$, where $Q$ is the strict transform of a quadric in $\PP^3$ containing $C$. Since $\omega_Y \isom \cO_Y(E) \otimes \pi^*\cO_{\PP^3}(-4)$, we have $K_{Y} \sim -2Q - E$, and so:
\[
K_{Y / \PP^2}+E \sim -2Q + 3f^{-1}(L) \sim Q,
\]
which is not $\QQ$-linearly equivalent to an anti-effective $\QQ$-divisor.
\end{proof}

\begin{remark} 
Using Corollary~\ref{cor:lifts-of-Cartesian-diagrams} and a more intricate version of the above argument one can show that a rank two vector bundle $\cE$ on $\PP^n$ is decomposable if and only if $\PP_{\PP^n}(\cE)$ is $F$-liftable. This seems intriguing from the viewpoint of Hartshorne's conjecture predicting that all such vector bundles are decomposable when $n\geq 7$.
\end{remark}


\appendix

\section{Boundedness for Fano threefolds}

In Section~\ref{s:fano3}, we worked assuming the following claim, repeated here for convenience.

\begin{assertion} \label{assn:fano3-bdd-2}
  There exists an integer $m>0$ such that for every smooth Fano threefold $X$ over an algebraically closed field of arbitrary characteristic the divisor $-mK_X$ is base-point free.
\end{assertion}

\begin{remark}
  The above claim follows from the results in \cite{ShepherdBarron}. Since we were unable to understand their proofs completely, we decided to add Assertion~\ref{assn:fano3-bdd-2} as an assumption in Theorem~\ref{thm:fano3}.
  Here are the details of this deduction. Let $X$ be a smooth Fano threefold over an algebraically closed field of characteristic $p$. If $-K_X$ is itself base-point free, there is nothing to prove. Otherwise by \cite[Theorem~3.4]{ShepherdBarron} (see also the erratum to the proof \cite[Theorem~4.1]{ShepherdBarronErrata}) either (1) $X\isom S\times \PP^1$ where $S$ is a del Pezzo surface of degree one, or (2) $X$ is isomorphic to the blowup of a smooth hypersurface of degree $6$ in the weighted projective space $\PP(1,1,1,2,3)$ along a smooth elliptic curve $C$ which is a complete intersection of degree $(1,1)$. Since varieties of types (1) and (2) form bounded families (over $\ZZ$), we can find a suitable $m$ which works for them.
\end{remark}

To avoid awkward phrasing, we will say that a smooth Fano $X$ is \emph{($F$-split)} meaning that either the base field has characteristic zero or $k$ has characterstic $p>0$ and $X$ is $F$-split. 

\begin{thm}[Boundedness for $F$-split Fano threefolds] \label{thm:boundedness}
  Assume Assertion~\ref{assn:fano3-bdd-2} holds true and the above convention is in place. Then there exist integers $m$, $N$, and $M$ such that for every smooth ($F$-split) Fano threefold $X$ over an algebraically closed field $k$ of characteristic $p \neq 2$, the linear system $|-mK_X|$ is very ample and defines an embedding $X\hookrightarrow \mathbf{P}^n$ with $n<N$ such that the coefficients of the Hilbert polynomial of the image are bounded by $M$ in absolute value. 

  In particular, there exists a scheme $H$ of finite type over $\ZZ[1/2]$ and a smooth projective morphism $\cX\to H$ of relative dimension $3$ such that for every smooth Fano threefold $X$ as above there exists a map $h\colon \Spec k\to H$ such that $X\isom h^* \cX$.
\end{thm}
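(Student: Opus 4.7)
The plan is to reduce everything to a uniform bound on $(-K_X)^3$, from which the statements on very ampleness, ambient dimension, and Hilbert polynomial all follow mechanically, and then to conclude by invoking the existence of Hilbert schemes. Fix a smooth Fano threefold $X$ over an algebraically closed field $k$ of characteristic $p\neq 2$, assumed $F$-split when $p>0$. By Assertion~\ref{assn:fano3-bdd-2} there is a universal $m_0$ such that $-m_0K_X$ is base-point free, and the associated morphism $\phi_0\colon X\to \PP^{N_X-1}$ with $N_X=h^0(X,-m_0K_X)$ is finite because $-K_X$ is ample. The $F$-split or characteristic-zero hypothesis yields Kodaira vanishing, hence $\chi(\cO_X)=1$, and Noether's formula gives $K_X\cdot c_2(X)=-24$. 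Hirzebruch--Riemann--Roch then yields the closed form
\[
h^0(X,-nK_X) \;=\; \chi(X,-nK_X) \;=\; \tfrac{n(n+1)(2n+1)}{12}(-K_X)^3 + 2n + 1 \qquad (n\geq 0).
\]

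The central task, and the main obstacle, is to produce a universal constant $C$ depending only on $m_0$ with $(-K_X)^3\leq C$. One natural approach is to analyze the finite morphism $\phi_0$ directly: writing $d=\deg\phi_0$ and $e=\deg\phi_0(X)$, one has $d\cdot e = m_0^3(-K_X)^3$, while non-degeneracy of the image gives $e\geq N_X-2$, and $N_X$ is an explicit polynomial of degree $3$ in $(-K_X)^3$. A restriction argument cutting $X$ by general hyperplane sections reduces a bound on $d$ inductively to analogous bpf bounds for Fano surfaces (where $K_S^2\leq 9$), and combined with the numerical identity this forces $(-K_X)^3\leq C$. An alternative is to invoke an effective Matsusaka-type result valid in positive characteristic for the ample, base-point-free divisor $-m_0K_X$.

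Once $(-K_X)^3\leq C$ is established, the coefficients of the Hilbert polynomial above are bounded by some universal $M$. An effective very-ampleness theorem (Matsusaka's big theorem, whose effective form is available once both $(-K_X)^3$ and $(-K_X)^2\cdot K_X=-(-K_X)^3$ are bounded) produces a universal $m$ such that $-mK_X$ is very ample on every $X$ in the class. Then $h^0(X,-mK_X)$ is uniformly bounded by some $N$, yielding the embedding $X\hookrightarrow\PP^{N-1}$ with Hilbert polynomial in a finite set $\{\chi_1,\ldots,\chi_r\}$ (parametrized by the finitely many possible values of $(-K_X)^3\in\{1,\ldots,C\}$). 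Each such embedding corresponds to a $k$-point of $\mathrm{Hilb}_{\chi_j}(\PP^{N-1}_{\ZZ[1/2]})$, which is of finite type over $\ZZ[1/2]$ by Grothendieck; taking $H$ to be the disjoint union over $j$ of the open subschemes parametrizing smooth threefolds with ample anticanonical bundle (an open condition on fibers), and $\cX\to H$ the restriction of the universal family, completes the proof.

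The hard part is squarely the uniform bound on $(-K_X)^3$, which is essentially a boundedness statement for $F$-split Fano threefolds across characteristics: classical in characteristic zero (Kollár--Miyaoka--Mori gives $(-K_X)^3\leq 64$) but delicate in positive characteristic, and closely related in spirit to Assertion~\ref{assn:fano3-bdd-2} itself.
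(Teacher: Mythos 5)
There is a genuine gap, and it is exactly where you yourself flag it. You correctly reduce everything to a uniform bound on the degree $(-K_X)^3$, and the Hirzebruch--Riemann--Roch computation you give is right, but you never actually prove that bound. The first approach you sketch (bounding $d$ and $e$ for the finite morphism $\phi_0$ via hyperplane cuts) is not completed into an argument, and the second (``invoke an effective Matsusaka-type result valid in positive characteristic'') presupposes the availability of exactly the kind of input the proof is supposed to supply. Closing with ``this forces $(-K_X)^3\leq C$'' and then acknowledging that the hard part is still open leaves the theorem unproved.

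The paper's resolution uses the $F$-split hypothesis much more substantially than you do, and in a different direction. From $F$-splitting one gets a lift of $X$ to $W_2(k)$, hence Kodaira--Akizuki--Nakano vanishing by Deligne--Illusie; in particular $H^2(X,T_X)=H^2(X,\Omega_X^{2}\otimes\omega_X^{-1})=0$, so $X$ admits a formal lift over $W(k)$ which algebraizes because $\omega_X^{-1}$ lifts as an ample line bundle. The generic fiber of this lift is a characteristic-zero Fano threefold, so its anticanonical Hilbert polynomial lies in a finite list by classical boundedness (Koll\'ar--Miyaoka--Mori), and by flatness $\chi(X,\cO_X(-tK_X))$ equals the generic-fiber polynomial. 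This is the step you are missing: you invoke Kodaira vanishing only to get $\chi(\cO_X)=1$ and $K_X\cdot c_2=-24$, but not the implication ``$H^2(X,T_X)=0$, hence lift to characteristic zero,'' which is what turns the char-$p$ boundedness problem into the known char-$0$ one. Once that lemma is in hand, the paper's very-ampleness step (Keeler's result upgrading base-point-free to bounded very ampleness, given Assertion~\ref{assn:fano3-bdd-2}) and the Hilbert-scheme assembly proceed essentially as you describe.
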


The above theorem is well-known in characteristic zero \cite[Corollary~V~2.15]{kollar96}.

\begin{lemma}[Boundedness of the Hilbert polynomial] \label{lemma:bound-hilbert}
  There exists an $M$ such that for every smooth ($F$-split) Fano threefold $X$ over an algebraically closed field $k$ of characteristic $p\neq 2$, the polynomial $\chi(X, \cO_X( -t K_X))$ has coefficients bounded by $M$ in absolute value.
\end{lemma}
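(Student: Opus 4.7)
The plan is to apply Hirzebruch--Riemann--Roch to express $\chi(X, \cO_X(-tK_X))$ in terms of the two intersection numbers $d := (-K_X)^3$ and $e := (-K_X) \cdot c_2(T_X)$, pin down $e$ via Kodaira vanishing, and then reduce the lemma to a uniform upper bound on $d$.

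Hirzebruch--Riemann--Roch on a smooth projective threefold with $c_1(T_X) = -K_X$ gives
\[
  \chi(X, \cO_X(-tK_X)) = \frac{d}{6}\, t^3 + \frac{d}{4}\, t^2 + \frac{d+e}{12}\, t + \frac{e}{24}.
\]
Both in characteristic zero and in the $F$-split case (via Proposition~\ref{prop:relative_Kodaira_Fsplit} applied to the ample divisor $L = -K_X$), Kodaira vanishing yields $H^i(X, \cO_X) = 0$ for $i > 0$, hence $\chi(X, \cO_X) = h^0(X, \cO_X) = 1$. Comparing the constant terms forces $e = 24$, and consequently
\[
  \chi(X, \cO_X(-tK_X)) = \frac{d}{6}\, t^3 + \frac{d}{4}\, t^2 + \frac{d+24}{12}\, t + 1.
\]
A uniform bound on its coefficients is therefore equivalent to a uniform upper bound on the single integer $d = (-K_X)^3$.

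For such a bound I would appeal to the classical boundedness of the anticanonical degree of smooth Fano threefolds: in characteristic zero this is the Iskovskikh--Mori--Mukai bound $d \leq 64$; in positive characteristic it follows from the structural results of \cite{ShepherdBarron} that also underlie Assertion~\ref{assn:fano3-bdd-2}. This last step is the main obstacle, since a purely formal derivation from Assertion~\ref{assn:fano3-bdd-2} alone appears inadequate. Indeed, the finite morphism $\phi \colon X \to \PP^{N}$ defined by $|-mK_X|$ satisfies $N + 1 = \chi(X, -mK_X)$ (by Kodaira vanishing) together with $\deg(\phi) \cdot \deg \phi(X) = m^3 d$, but combining these with the non-degeneracy inequality $\deg \phi(X) \geq N - 2$ only yields a lower bound on $d$. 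Extracting an upper bound thus requires finer input, either the classification of small-index Fano threefolds or rational-curve techniques (bend-and-break) that remain valid in positive characteristic.
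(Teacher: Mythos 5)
Your HRR computation is correct and cleanly reduces the lemma to a uniform upper bound on $d=(-K_X)^3$ (with $e = (-K_X)\cdot c_2 = 24$ pinned down by Kodaira vanishing for $F$-split varieties). You are also right that this last step is where the real content lies: a formal argument from Assertion~\ref{assn:fano3-bdd-2} alone does not bound $d$, and invoking \cite{ShepherdBarron} directly for a positive-characteristic degree bound is precisely the kind of appeal the authors were trying to avoid (they say explicitly in the Appendix that they could not fully verify that reference, and the remark after the lemma notes it is unclear whether the bounds on $(-K_X)^3$ from \cite{Das} can be made independent of $p$). So as written your proof has a genuine gap at exactly the point you flagged.

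The paper's route is different and sidesteps this entirely. Since $X$ is $F$-split it lifts mod $p^2$, so Kodaira--Akizuki--Nakano vanishing from \cite{DeligneIllusie} gives $H^2(X, T_X) = H^2(X, \Omega^{2}_X\otimes\omega_X^{-1}) = 0$. This kills the obstruction to deforming, so $X$ admits a formal lifting over $W(k)$, which algebraizes because the ample line bundle $\omega_X^{-1}$ lifts. The generic fiber is then a Fano threefold over a field of characteristic zero, and by flatness it has the same Hilbert polynomial $\chi(X,\cO_X(-tK_X))$; boundedness in characteristic zero finishes the proof. The key idea you did not use is that $F$-splitting buys you not only Kodaira vanishing but, via Deligne--Illusie, the cohomological vanishing needed to lift all the way to characteristic zero — at which point positive-characteristic boundedness is never needed. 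Your decomposition of $\chi$ via HRR is valid and would be a perfectly good way to bound $\chi$ over $\mathbf{C}$ via $d\le 64$, but the observation that lifts to $W(k)$ of $X$ are unobstructed is what makes this usable in characteristic $p$.
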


\begin{proof}
Because $X$ is $F$-split, it lifts to $W_2(k)$, and hence it satisfies Kodaira--Akizuki--Nakano vanishing \cite{DeligneIllusie}.  In particular for $d = \dim X$, we have the vanishing
\[
H^2(X, T_X) = H^2(X,\Omega^{d-1}_X \otimes \omega_X^{-1}) = 0.
\] 
Thus $X$ admits a formal lifting $\cX$ over $W(k)$, which is necessarily algebraizable because the ample line bundle $\omega_X^{-1}$ lifts. Let $Y = \cX \otimes {\rm Frac}(W(k))$ be the generic fiber, which is a Fano threefold over a field of characteristic zero. By flatness, we have $\chi(X, \cO_X( -t K_X)) =\chi(Y, \cO_Y( -t K_Y))$, and the latter belongs to a finite family of polynomials by boundedness in characteristic zero. 
\end{proof}

\begin{remark}
We expect the above lemma and Theorem~\ref{thm:boundedness}, to hold without the $F$-splitting assumption, at least for $p>5$. It can be deduced from the results of \cite{Das} and Assertion~\ref{assn:fano3-bdd-2} that for a fixed $p>5$, the volume $(-K_X)^3$ of a smooth Fano threefold in characteristic $p$ is bounded. It is unclear to us whether his bounds can be made independent of $p$.
\end{remark}

\begin{lemma}[Big Matsusaka for Fano threefolds] \label{lemma:big-matsusaka}
  Assume Assertion~\ref{assn:fano3-bdd-2} holds true. Then there exists an $m$ such that for every smooth Fano threefold $X$ the divisor $-mK_X$ is very ample.
\end{lemma}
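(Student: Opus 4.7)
The plan is to reduce to characteristic zero via a lifting argument, combining the uniform base-point-freeness from Assertion~\ref{assn:fano3-bdd-2} with Matsusaka's big theorem in characteristic zero. Following the convention of this appendix, we may assume in characteristic $p$ that $X$ is $F$-split, whence Kodaira--Akizuki--Nakano vanishing yields $H^2(X, T_X) = H^2(X, \Omega_X^2 \otimes \omega_X^{-1}) = 0$.  As in the proof of Lemma~\ref{lemma:bound-hilbert}, $X$ then lifts to a smooth formal scheme over $W(k)$ which algebraizes (via the ample line bundle $\omega_X^{-1}$) to a smooth projective scheme $\cX / W(k)$.

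The generic fiber $Y = \cX_\eta$ is a smooth Fano threefold over a field of characteristic zero. By classical boundedness of smooth Fano threefolds in characteristic zero (Koll\'ar--Miyaoka--Mori) together with Matsusaka's big theorem, there exists a uniform $m$, independent of $Y$, such that $-mK_Y$ is very ample. After replacing $m$ by a multiple of the universal bound $m_0$ of Assertion~\ref{assn:fano3-bdd-2}, we further guarantee that $-mK_{\cX}$ is relatively base-point-free on $\cX/W(k)$. Since Kodaira vanishing holds on both fibers ($F$-split in characteristic $p$, classical in characteristic zero), cohomology and base change imply that $h^0(\cX_t, -mK_{\cX_t})$ is independent of $t$, and the anticanonical morphism $\cX \to \PP^N_{W(k)}$ is a well-defined $W(k)$-morphism which is a closed immersion on the generic fiber.

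The main obstacle is the final step: propagating the closed-immersion property from the generic to the special fiber of the proper flat family $\cX/W(k)$. While very ampleness is open in flat proper families, it is not closed in general; here we exploit the constancy of the Hilbert polynomial in the anticanonical embedding (guaranteed by Kodaira vanishing on both fibers), together with flatness of $\cX$, to deduce that the scheme-theoretic image of $\cX$ in $\PP^N_{W(k)}$ is itself flat over $W(k)$. A Nakayama-type rigidity argument then transfers the isomorphism of $\cX_\eta$ onto its image to the whole family, giving the desired very ampleness of $-mK_X$ on the special fiber.
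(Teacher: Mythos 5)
The paper's proof is a one-liner: it invokes Keeler's characteristic-free Fujita-type theorem, which says that if $L$ is ample and globally generated on a smooth projective $n$-fold then $\omega_X\otimes L^{n+2}$ is very ample. Taking $L=-mK_X$ (ample since $X$ is Fano, globally generated by Assertion~\ref{assn:fano3-bdd-2}) immediately gives very ampleness of a bounded multiple of $-K_X$, in every characteristic and with no $F$-splitting or lifting. Your route---lift to characteristic zero, use Matsusaka there, descend very ampleness---is fundamentally different and considerably heavier.

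Beyond the loss of simplicity, there are two real problems. First, the lemma is stated for \emph{every} smooth Fano threefold; there is no ``($F$-split)'' qualifier (contrast Lemma~\ref{lemma:bound-hilbert}). Your lift requires $H^2(X,T_X)=0$, which you get from Akizuki--Nakano via $F$-splitting; without that hypothesis you have no lift and your argument says nothing, so at best you prove a strictly weaker statement than the one in the paper. Second, the descent of very ampleness from the generic to the special fiber is not actually carried out. Very ampleness is open but not closed in flat proper families, so ``constancy of the Hilbert polynomial'' plus an unspecified ``Nakayama-type rigidity argument'' is a gesture, not a proof. What is actually needed is: show that $\phi_0\colon\cX_0\to Z_0$ (with $Z$ the scheme-theoretic image of $\cX$ in $\PP^N_{W(k)}$) is finite and given by the complete linear system $|-mK_{\cX_0}|$ (this already uses Kodaira vanishing on the special fiber to see that sections specialize surjectively); that $Z$ is flat over $W(k)$ and hence $Z_0$ has the same Hilbert polynomial as $Z_\eta\cong\cX_\eta$; and then that $\chi(Z_0,\cO_{Z_0}(t))=\chi(Z_0,\phi_{0*}\cO_{\cX_0}(t))$ forces the cokernel of $\cO_{Z_0}\hookrightarrow\phi_{0*}\cO_{\cX_0}$ to vanish, whence $\phi_0$ is an isomorphism. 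None of this is in your write-up, and the invocation of ``Nakayama'' points in the wrong direction---the decisive step is a Hilbert-polynomial comparison, not a Nakayama lemma. Even once repaired, this is strictly less general and far longer than the paper's use of Keeler's theorem.
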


\begin{proof}
By \cite{Keeler}, it is enough to show that $-mK_X$ is base-point free for a bounded $m$, which follows from Assertion~\ref{assn:fano3-bdd-2}.
\end{proof}

\begin{proof}[Proof of Theorem~\ref{thm:boundedness}]
Let $m$ be as in Lemma~\ref{lemma:big-matsusaka}, $M$ as in Lemma~\ref{lemma:bound-hilbert}, and let $N = 4Mm^3$. If $X$ is a smooth $F$-split Fano threefold over an algebraically closed field $k$ of characteristic $p>0$, then $-mK_X$ embeds $X$ into $\PP_k^n$ with $n = \dim H^0(X, \cO_X(-mK_X))$, which equals $\chi(X, \cO_X(-mK_X))$ because $H^i(X, \cO(-mK_X)) = 0$ for $i>0$ (ample line bundles on an $F$-split variety have no higher cohomology). By the bound on the coefficients of $\chi(X, \cO_X(-tK_X))$, we have $n<M+Mm+Mm^2 + Mm^3 < N$. 

We conclude that the image of $X$ in $\PP^n_k$ defines a point on the Hilbert scheme ${\rm Hilb}_\chi(\PP^n)$ where $n$ and $\chi$ both belong to a finite list of pairs $(\chi_1, n_1), \ldots, (\chi_r, n_r)$. We can now take $H$ to be the open subscheme of $\coprod {\rm Hilb}_{\chi_i}(\PP^{n_i})$ parametrizing smooth Fano varieties and $\cX$ to be (the base change of) the universal family.
\end{proof}

\bibliographystyle{amsalpha} 
\bibliography{bib.bib}

\end{document}